\documentclass[11pt,a4paper,reqno]{amsart}

\usepackage[a4paper,top=2.7cm,bottom=2.7cm,left=3cm,right=3cm,heightrounded]{geometry}
\usepackage[T1]{fontenc}
\usepackage[utf8]{inputenc}
\usepackage{lmodern}
\usepackage{microtype}
\usepackage{amsmath,amssymb,amsthm,mathtools}
\usepackage{mathrsfs}
\usepackage{graphicx}
\graphicspath{{./}{figures/}}
\usepackage{booktabs}
\usepackage{longtable}
\usepackage{array}
\usepackage{enumitem}
\usepackage{xurl}
\usepackage{tikz}
\usetikzlibrary{arrows.meta,calc,patterns,positioning}
\definecolor{chartRed}{RGB}{214,78,70}
\definecolor{chartGreen}{RGB}{54,150,88}
\definecolor{chartBlue}{RGB}{59,91,222}
\definecolor{chartGold}{RGB}{214,181,43}
\colorlet{chartRedFill}{chartRed!15}
\colorlet{chartGreenFill}{chartGreen!15}
\colorlet{chartBlueFill}{chartBlue!15}
\colorlet{chartGoldFill}{chartGold!18}

\usepackage[hidelinks]{hyperref}
\hypersetup{
  pdftitle={Infinitely many holes in connectedness loci for collinear affine iterated function systems},
  pdfauthor={Bernat Espigule},
  pdfsubject={Connectedness loci for collinear affine iterated function systems},
  pdfkeywords={restricted roots, connectedness locus, affine iterated function systems, holes, renormalization, Sturm theory}
}

\numberwithin{equation}{section}

\theoremstyle{plain}
\newtheorem{theorem}{Theorem}[section]
\newtheorem{proposition}[theorem]{Proposition}
\newtheorem{lemma}[theorem]{Lemma}
\newtheorem{corollary}[theorem]{Corollary}
\theoremstyle{definition}
\newtheorem{definition}[theorem]{Definition}
\theoremstyle{remark}
\newtheorem{remark}[theorem]{Remark}

\newcommand{\C}{\mathbb C}
\newcommand{\Rr}{\mathbb R}
\newcommand{\Z}{\mathbb Z}

\newcommand{\D}{\mathbb D}
\newcommand{\clD}{\overline{\mathbb D}}
\newcommand{\M}{\mathcal M}
\newcommand{\Rs}{\mathcal R}
\newcommand{\B}{\mathcal B}
\newcommand{\X}{\mathcal X}
\newcommand{\U}{\mathcal U}
\newcommand{\Ctrap}{\mathcal C_0}
\newcommand{\Enc}{\mathcal E_{\!\rm enc}}
\newcommand{\Hutch}{\mathcal H}
\newcommand{\Rop}{\mathscr R}
\newcommand{\Rstar}{\mathscr R_*}

\newcommand{\widW}{\widetilde W}
\newcommand{\ol}{\overline}
\newcommand{\eps}{\varepsilon}
\newcommand{\ii}{\mathrm i}
\newcommand{\restr}{\mathbin{\upharpoonright}}
\DeclareMathOperator{\Int}{int}
\DeclareMathOperator{\Impart}{Im}
\DeclareMathOperator{\Repart}{Re}

\title[Holes in connectedness loci of collinear affine IFS]{Infinitely many holes in connectedness loci for collinear affine iterated function systems}
\author[B. Espigule]{Bernat Espigule}
\address{Departament d'Inform\`atica, Matem\`atica Aplicada i Estad\'istica, Universitat de Girona, 17003 Girona, Catalonia, Spain}
\email{bernat@espigule.com}
\subjclass[2020]{Primary 28A80; Secondary 37C70, 37F46}
\keywords{roots of restricted polynomials, connectedness locus, collinear affine iterated function systems, holes, renormalization, Sturm theory}
\date{}

\begin{document}

\begin{abstract}
We study connectedness loci for a one-parameter family of collinear affine iterated function systems with equally spaced translations. These loci are equivalent to closures of roots of monic polynomials with coefficients in a prescribed finite interval of integers. We prove that each of them has infinitely many holes. The two-map case is the theorem of Calegari--Koch--Walker after inversion of the parameter. For all larger alphabets we construct a stationary family of finite-capture loops in the geometry of the corresponding difference attractor. Each loop surrounds a missing-center configuration whose midpoint word is not admissible, and a finite inverse-tree certificate places the enclosed parameter outside the connectedness locus. A no-return certificate separates the resulting holes. The witness parameters converge to a canonical algebraic boundary point. The finite sign, intersection, and pruning verifications are recorded as exact algebraic certificates.
\end{abstract}

\maketitle
\enlargethispage{2pt}

\section{Introduction}

Many fractal constructions depend on parameters, and the topology of the attractor may change in subtle ways as the parameter varies.  The classical prototype is the Mandelbrot set: it is the connectedness locus for the quadratic family $z\mapsto z^2+c$, and membership is detected by the orbit of the critical value.  For affine iterated function systems the analogous connectedness problem is usually less direct, since the maps need not be inverse branches of a single polynomial or rational map.

A fundamental affine example is the connectedness locus for a pair of complex linear maps introduced by Barnsley and Harrington~\cite{BarnsleyHarrington1985}.  In that setting the connectedness problem is closely related to zeros of power series with coefficients in $\{-1,0,1\}$ and to the geometry of complex Bernoulli convolutions; see, for instance, Solomyak and Xu~\cite{SolomyakXu2003}.  Bandt proved the existence of a genuine hole~\cite{Bandt2002}, and Calegari, Koch, and Walker later proved that there are infinitely many holes accumulating at a renormalization point~\cite{CalegariKochWalker2017}.  A key feature of this circle of ideas is that connectedness can be tested by a single marked point in a difference attractor, rather than by direct inspection of the original attractor.

The present paper studies the corresponding question for collinear systems with more than two maps.  The translations are equally spaced on the real line.  This has two consequences.  First, the parameter set can be described arithmetically as a closure of roots of monic polynomials whose non-leading coefficients lie in a finite interval of integers.  Second, the difference alphabet is again collinear and equally spaced: the difference of the $n$-map alphabet is the alphabet of the same type with parameter $N=2n-1$.  Thus the marked-point formulation remains in the same family of attractors, which makes it possible to combine symbolic finite capture with explicit restricted-root calculations.

Throughout the paper
\[
        \D:=\{z\in\C: |z|<1\},
\]
and all closures in $\C\setminus\clD$ are relative closures unless explicitly stated otherwise. Fix an integer $n\ge 2$ and set
\[
        D_n=\{-n+1,-n+2,\ldots,n-1\},\qquad
        A_n=\{-n+1,-n+3,\ldots,n-1\}.
\]
Let $\Rs_n$ be the set of roots of monic polynomials whose non-leading coefficients lie in $D_n$:
\begin{equation}\label{eq:Rn}
\Rs_n:=\left\{c\in\C:\; c^m+\sum_{j=0}^{m-1}d_jc^j=0\text{ for some }m\ge 1\text{ and }d_j\in D_n\right\}.
\end{equation}
For $c\in\C\setminus\clD$ consider the iterated function system
\[
        \left\{f_t(z)=t+\frac{z}{c}: t\in A_n\right\},
\]
and denote its attractor by $E(c,n)$:
\[
        E(c,n)=\bigcup_{t\in A_n}f_t(E(c,n)).
\]
The corresponding connectedness locus is
\[
        \M_n:=\{c\in\C\setminus\clD: E(c,n)\text{ is connected}\}.
\]

\begin{proposition}[Restricted-root formulation]\label{prop:restricted-root}
For every integer $n\ge 2$,
\begin{equation}\label{eq:Mn-root-formulation}
        \M_n=\overline{\Rs_n\cap(\C\setminus\clD)}^{\,\C}\cap(\C\setminus\clD)
        =\overline{\Rs_n\cap(\C\setminus\clD)}^{\,\C\setminus\clD}.
\end{equation}
In particular, $\M_n$ is closed in $\C\setminus\clD$. Equivalently,
\[
        c\in\M_n\quad\Longleftrightarrow\quad c\in\C\setminus\clD\text{ and }\exists(d_k)_{k\ge 1}\subset D_n\text{ such that }1+\sum_{k=1}^{\infty}d_kc^{-k}=0.
\]
\end{proposition}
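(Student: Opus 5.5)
The plan is to follow the standard Hata/Barnsley--Harrington criterion: connectedness of an IFS attractor is equivalent to a connectivity condition on the pieces, which reduces to a marked point lying in a difference attractor. I will then translate the resulting power-series condition into restricted roots.

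\emph{Step 1 (attractor as a set of series).} For $|c|>1$ write
\[
E(c,n)=\Bigl\{\sum_{k\ge0}a_kc^{-k}: a_k\in A_n\Bigr\}.
\]
The pieces $f_t(E)$ with $t\in A_n$ are translates of $c^{-1}E$ by the equally spaced points $-n+1,-n+3,\dots,n-1$. By Hata's criterion, $E$ is connected if and only if the intersection graph on the pieces is connected.

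\emph{Step 2 (reduction to adjacent pieces).} Two pieces $f_s(E)$ and $f_t(E)$ meet if and only if
\[
s-t\in c^{-1}(E-E).
\]
Here
\[
E-E=\Bigl\{\sum_{k\ge0}d_kc^{-k}: d_k\in D_n'\Bigr\},\qquad D_n'=\{a-b:a,b\in A_n\}=2D_n .
\]
So $f_s(E)\cap f_t(E)\neq\emptyset$ if and only if
\[
(s-t)/2\in\Bigl\{\sum_{k\ge1}d_kc^{-k}: d_k\in D_n\Bigr\}=:W_c .
\]
I would first show that connectivity of the graph forces an edge between consecutive pieces, $s-t=\pm2$. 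Consider any pair with $s<t$ that intersects; this gives $m=(t-s)/2\in W_c$ with $1\le m\le n-1$. The key observation is that $W_c$ is symmetric and "convex in the digit sense." From a representation $m=\sum d_kc^{-k}$, I want to produce a representation of $1$.

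The main obstacle is exactly this step. My first attempt is to argue directly on the graph. If no consecutive pair meets, then the point $0$ (for $n$ odd) or the gap between the two middle translations separates the pieces into left and right halves. I would then need to show that no edge can cross the gap. An edge crossing it has $m\ge1$, and I would try to show that $m\in W_c$ implies $1\in W_c$ by an induction that peels digits: write $m-1=\sum d_k'c^{-k}$ using $d_k'$ obtained by decreasing a positive digit. This is unclear, so a cleaner route is preferable. Define the graph with vertex set $A_n$ and note that the edge set is invariant under translation, since it depends only on $s-t$. A connected graph on $n$ equally spaced points with translation-invariant edges whose lengths all lie in $\{2m: m\in W_c\}$ has differences generating $2\Z$. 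Connectivity then requires the gcd of the lengths $m$ to be $1$. This does not immediately give $1\in W_c$, so I expect a genuine argument is needed. The fallback is to use that $W_c=c^{-1}(D_n+W_c)$ and that $W_c\cap\Rr$-structure is an interval-like set. If $m\in W_c$ with $m\ge 2$, then $m-d_1\in cW_c$, and shifting the first digit toward $0$ by one unit produces $m-1\in W_c$ or keeps a path. Conversely, $1\in W_c$ gives all consecutive edges, hence connectivity. So $E$ is connected if and only if $1\in W_c$, i.e. there exist $(d_k)\subset D_n$ with $1+\sum_{k\ge1} d_kc^{-k}=0$ after replacing $d_k$ by $-d_k$.

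\emph{Step 3 (series versus polynomial roots).} If $c\in\Rs_n$ with $|c|>1$, dividing $c^m+\sum d_jc^j=0$ by $c^m$ gives a finite series of the required form, so $\Rs_n\cap(\C\setminus\clD)\subset\M_n$. Closedness of $\M_n$ in $\C\setminus\clD$ follows by compactness of $D_n^{\mathbb N}$ and locally uniform convergence of the series on $|c|\ge r>1$, via a diagonal limit. Hence the closure is contained in $\M_n$. For the converse, take $c\in\M_n$ with a zero $c$ of
\[
F(z)=1+\sum_{k\ge1}d_kz^{-k}.
\]
The truncations $F_N$ converge uniformly near $c$. $F$ is holomorphic and not identically zero, since it tends to $1$ at $\infty$. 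Hurwitz's theorem then gives zeros $c_N\to c$ of $F_N$, and $z^NF_N(z)$ is monic with coefficients in $D_n$, so $c_N\in\Rs_n$ with $|c_N|>1$ eventually. The two closure descriptions coincide trivially, since $\C\setminus\clD$ is open.
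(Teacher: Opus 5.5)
You have a genuine gap in the forward half of Step 2. Your Step 3 is fine and is essentially the paper's argument: Hurwitz/Rouch\'e for the truncations $z^MF_M(z)$, and a diagonal compactness limit for the reverse inclusion. The easy half of Step 2 is also fine: if $1\in W_c$, all consecutive pieces meet and Hata's criterion gives connectedness.

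\textbf{The gap.} You need that connectedness of $E(c,n)$ forces the \emph{adjacent} pieces $t+E/c$ and $t+2+E/c$ to meet, i.e.\ $1\in W_c$. Neither of your routes proves this.
\begin{enumerate}
\item The graph argument only settles $n\le 3$. For $n\ge4$ a translation-invariant edge set can be connected without gap $1$: on four equally spaced vertices, the gaps $\{2,3\}$ give the connected graph with edges $0$--$2$, $0$--$3$, $1$--$3$.
\item The digit-peeling fallback does not work. Changing $d_1$ by one unit moves $\sum_{k\ge1}d_kc^{-k}$ by $c^{-1}$, not by $1$. Moreover, $W_c$ is a planar self-affine set with no order structure that would let you pass from $m\in W_c$ to $m-1\in W_c$.
\end{enumerate}
This omission is not cosmetic. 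Without it, a connected $E(c,n)$ only gives $m=\sum_{k\ge1}d_kc^{-k}$ for some $m\in\{1,\dots,n-1\}$. Then $c$ is a limit of roots of $mz^M-\sum_{k\le M}d_kz^{M-k}$, which are not monic when $m\ge2$. So the inclusion $\M_n\subset\overline{\Rs_n\cap(\C\setminus\clD)}$ and the forward implication of the series criterion remain unproved.

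\textbf{How the paper handles it, and how you could.} The paper does not derive this step internally. It is exactly the marked-point criterion $c\in\M_n\iff 2c\in E(c,N)$ (Proposition~\ref{prop:marked-point}, imported from the companion paper). Note that $2c\in E(c,N)=E-E$ is equivalent to $E/c\cap(E/c+2)\neq\varnothing$. For a self-contained argument, the missing input is topological, not arithmetic:
\begin{itemize}
\item[] \emph{Lemma.} For a plane continuum $K$ and $v\neq0$, $K\cap(K+v)=\varnothing$ implies $K\cap(K+jv)=\varnothing$ for all integers $j\neq0$.
\end{itemize}
One way to see this:
\begin{enumerate}
\item Replace $K$ by its fill $\widehat K$, the complement of the unbounded component of $\C\setminus K$.
\item One checks that $\widehat K\cap(\widehat K+v)=\varnothing$. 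If one of $K$, $K+v$ lay in a bounded complementary component of the other, you would get $\widehat K+v\subset\widehat K$ or $\widehat K-v\subset\widehat K$, which is impossible for a bounded set.
\item Thicken the non-separating continuum $\widehat K$ to a closed disk disjoint from its translate.
\item Apply Brouwer's lemma to the fixed-point-free translation $z\mapsto z+v$.
\end{enumerate}
Now take $K=E/c$, a continuum when $E$ is connected, and $v=2$. Hata's criterion gives some intersecting pair of pieces, i.e.\ $K\cap(K+2m)\neq\varnothing$ for some $m\ge1$. The lemma then yields $K\cap(K+2)\neq\varnothing$, which is exactly $1\in W_c$.
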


\begin{proof}
The reciprocal-series criterion is the combination of the marked-point characterization with the radix description of the difference attractor; see Sections~\ref{app:restricted-roots} and~\ref{app:marked-point} for a derivation. If the digits $(d_k)$ are eventually zero, then clearing denominators gives a monic polynomial with non-leading coefficients in $D_n$, so the corresponding parameter lies in $\Rs_n$.

Now let $c\in\M_n$, and choose digits $(d_k)_{k\ge1}\subset D_n$ with
\[
        1+\sum_{k=1}^{\infty}d_kc^{-k}=0.
\]
Set
\[
        F(z):=1+\sum_{k=1}^{\infty}d_kz^{-k},\qquad
        F_m(z):=1+\sum_{k=1}^{m}d_kz^{-k}.
\]
Let $\eps>0$ be such that $B(c,\eps)\subset\C\setminus\clD$. Since $F(z)=1+O(z^{-1})$ at infinity, $F$ is not identically zero on $\C\setminus\clD$; hence its zeros are isolated. Choose $r_\eps\in(0,\eps)$ so that $F$ has no zero on
\[
        \Gamma_\eps:=\{z: |z-c|=r_\eps\}.
\]
The functions $F_m$ converge uniformly to $F$ on $\Gamma_\eps$, so by Rouch\'e's theorem, for all sufficiently large $m$ the function $F_m$ has the same number of zeros inside $\Gamma_\eps$ as $F$. In particular, for all such $m$ it has at least one zero there; choose one and call it $c_{\eps,m}$. Since
\[
        P_m(z):=z^mF_m(z)=z^m+\sum_{j=0}^{m-1}d_{m-j}z^j
\]
is monic with non-leading coefficients in $D_n$, one has $c_{\eps,m}\in\Rs_n$ and $|c_{\eps,m}-c|<\eps$. As $\eps>0$ is arbitrary, it follows that $c\in\overline{\Rs_n}^{\,\C}\cap(\C\setminus\clD)$.

Conversely, let $c_j\in\Rs_n\cap(\C\setminus\clD)$ with $c_j\to c\in\C\setminus\clD$. Write
\[
        1+\sum_{k=1}^{\infty}d_{j,k}c_j^{-k}=0
\]
with each $(d_{j,k})_{k\ge 1}$ eventually zero and $d_{j,k}\in D_n$. Because $D_n$ is finite, a diagonal subsequence yields digits $(d_k)_{k\ge1}\subset D_n$ such that, for each fixed $k$, one has $d_{j_\nu,k}=d_k$ for all sufficiently large $\nu$. Since $|c_{j_\nu}|\ge r>1$ eventually, the series are uniformly absolutely convergent, and dominated convergence gives
\[
        1+
        \sum_{k=1}^{\infty}d_kc^{-k}=0.
\]
Hence $c\in\M_n$. This proves \eqref{eq:Mn-root-formulation} and the closedness assertion.
\end{proof}

Proposition~\ref{prop:restricted-root} is the root-theoretic counterpart of the marked-point characterization recalled later in Proposition~\ref{prop:marked-point}: the same collinear family controls both the original attractor and its difference attractor, now with alphabet size $2n-1$. In the proof of Theorem~\ref{thm:main} we use Proposition~\ref{prop:restricted-root} only for this arithmetic framing and for the fact that $\M_n$ is closed in $\C\setminus\clD$; the geometric hole construction starts from the marked-point and trap/enclosure framework recalled in Section~\ref{sec:finite-capture}.

If one allows an arbitrary nonzero leading coefficient in \eqref{eq:Rn}, reciprocal polynomials show that the resulting root locus is invariant under $c\mapsto 1/c$. This is the symmetry behind the reciprocal parameterizations in the classical two-map literature. In particular, for $n=2$, after inversion $c\mapsto 1/c$, our locus $\M_2$ is exactly the connectedness locus studied by Calegari--Koch--Walker~\cite{CalegariKochWalker2017}. Hence the statement of the theorem below is already known in that case. In \cite{EspiguleJuherSaldana2024} it was conjectured that analogous holes exist in $\M_n$ for every $n>2$. The present paper proves that conjecture.

Unless explicitly stated otherwise, all boundaries, interiors, and connected components are taken relative to $\C\setminus\clD$. A hole of $\M_n$ is a bounded connected component of $(\C\setminus\clD)\setminus\M_n$.

\begin{theorem}[Infinitely many holes]\label{thm:main}
For every integer $n\ge 2$, the connectedness locus $\M_n$ has infinitely many holes.
\end{theorem}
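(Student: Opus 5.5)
The plan follows the abstract. The case $n=2$ is the theorem of Calegari--Koch--Walker after the inversion $c\mapsto 1/c$, so we assume $n\ge 3$. We work in the marked-point formulation. By Proposition~\ref{prop:restricted-root}, $c\in\M_n$ if and only if the point $1$ (suitably normalized) lies in the difference attractor with digit set $D_n$. That attractor is the collinear equally spaced attractor with $N=2n-1$ maps, $z\mapsto d+z/c$ for $d\in D_n$. So $c\notin\M_n$ exactly when no infinite admissible digit path realizes the marked point. To exhibit a hole we need three things:
\begin{enumerate}
\item a Jordan curve $\gamma\subset\M_n$;
\item a parameter $c_0$ inside $\gamma$ with $c_0\notin\M_n$;
\item a guarantee that distinct curves bound distinct complementary components.
\end{enumerate}
The component of $c_0$ in $(\C\setminus\clD)\setminus\M_n$ is then trapped inside $\gamma$, so it is bounded.

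\emph{Step 1 (witness parameters and renormalization).} We pick a canonical algebraic boundary point $c_*\in\partial\M_n$, for instance a root of an explicit restricted polynomial in which the difference attractor has a self-similar "missing-center" configuration. We then define a sequence $c_k\to c_*$ by a stationary recursion: $c_k$ is a root of $P_k(z)=z^{m_k}Q(z)+R(z)$ with fixed blocks, mimicking the CKW renormalization. Near $c_k$, the first $m_k$ levels of the attractor are close to the limit configuration at $c_*$. The remaining levels are then a fixed finite pattern, so every certificate reduces to finitely many algebraic inequalities that are uniform in $k$.

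\emph{Step 2 (loops in $\M_n$).} Around each $c_k$ we build a closed loop $\gamma_k$ of parameters. Along the loop, a finite-capture argument keeps the marked point in the attractor. We use a trap region $\Ctrap$ that the digit maps send into itself (a Hutchinson-type enclosure $\Enc$), together with a finite word $w$ that sends the marked point into $\Ctrap$ and stays valid along $\gamma_k$. In practice, $\gamma_k$ is a union of arcs. On each arc a specific polynomial family with $D_n$ coefficients has a root, or $\Ctrap$-capture holds, and consecutive arcs overlap. The verification consists of sign conditions and Sturm sequences on real slices, plus intersection checks, all done as exact algebraic certificates at $c_*$ and propagated to $c_k$ for large $k$.

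\emph{Step 3 (a point outside, and separation).} At the center $c_k$, the midpoint word, i.e.\ the digit choice that would fill the missing center, is not admissible. We show $c_k\notin\M_n$ with a finite inverse-tree certificate. We run the pruned tree of partial sums $c^m(1+\sum_{j\le m}d_jc^{-j})$, and every branch eventually has modulus exceeding the escape radius $(n-1)/(|c|-1)$. Since there are finitely many branch types up to the stationary recursion, this gives a finite computation. For separation, a no-return certificate shows that the complement near $c_k$ cannot connect to the one near $c_{k'}$. Concretely, the loops $\gamma_k$ are nested or disjoint, and $c_{k'}$ lies outside $\gamma_k$ for $k'\ne k$. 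Hence the holes are pairwise distinct, and there are infinitely many.

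The main obstacle is Step 2: constructing loops that work \emph{uniformly} in $k$ for \emph{every} $n\ge3$. The alphabet grows with $n$, so the certificates must be parametrized in $n$, not checked numerically one case at a time. My first attempt would be a rescaling in which the geometry at $c_*$ depends on $n$ only through an explicit scaling of the interval $[-(n-1),n-1]$. The sign inequalities would then be monotone in $n$, reducing the problem to checking finitely many small $n$ plus an asymptotic regime.
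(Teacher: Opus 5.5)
Your outline has the paper's architecture: a Jordan curve in $\M_n$, an enclosed witness certified outside $\M_n$ by a finite inverse tree, a no-return separation, a stationary renormalization, and Calegari--Koch--Walker for $n=2$. However, the step that carries the mathematical content is missing, and two of the mechanisms you do name are wrong.

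First, $c_*$ cannot be a root of a restricted polynomial. If $c^m+\sum_{j<m}d_jc^j=0$ with $d_j\in D_n$ and $c\in\X_n\setminus\Rr$, then $W_u(c)=0$ for the admissible word $u_j=-2d_{m-j}\in A_N$. Proposition~\ref{prop:center-root} then puts $c$ in $\Int\M_n$, not on $\partial\M_n$. In the paper the accumulation point is not chosen in advance. It is the zero $\xi_n$ of $Q_n$ in $\B_n$, equivalently a zero of the \emph{infinite} eventually periodic admissible series $2-\sum_{j\ge1}w_{\infty,j}c^{-j}$ with $w_\infty=P\,(2,a,a,-2,-a,-a)^\infty$. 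It appears only as the limit of the witnesses.

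Second, you describe the trap as a region that the digit maps send into itself, and you identify it with $\Enc$. Forward invariance only gives $E(c,N)\subset\Enc(c,N)$, so landing an inverse iterate of the marked point in such a set proves nothing. Capture requires the reverse covering property $\Ctrap(c)\subset\Hutch_{c,N}(\Ctrap(c))$, which is what yields $\Ctrap(c)\subset\Int E(c,N)$. The paper uses the two sets in opposite roles: the trap to build the loop, the enclosure to kill the witness.

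The missing idea is the ghost parallelogram. You need admissible words $d^{(1)},\ldots,d^{(4)}$ with $d^{(1)}+d^{(3)}=d^{(2)}+d^{(4)}=2w$ and $w$ containing an odd digit (Definition~\ref{def:ghost}). Then $Z_{d^{(i)}}=Z_w+\Delta_i$ with fixed low-degree polynomials satisfying $\Delta_3=-\Delta_1$ and $\Delta_4=-\Delta_2$. Near the simple zero of $W_w$, $Z_w$ has derivative of size about $|c|^m$ while $\Delta_i$ and $\Ctrap(c)$ barely move. So the four charts are essentially the translates $\Ctrap-\Delta_i$, arranged symmetrically about the witness. The chart that would cover the centre is $\U_w$, which does not exist because $w\notin A_N^m$. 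The operator $\Rop$ preserves admissibility, midpoints and the odd defect (Lemma~\ref{lem:R-basic}), and produces the exact identities of Lemma~\ref{lem:stationary-diffs} and Corollary~\ref{cor:closed-form}. Because the words use only the letters $0,\pm2,\pm a,\pm b$, every check becomes an explicit polynomial statement in $n$, and through these identities in the odd level $k$ as well.

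Your Step~2 (on each arc some polynomial has a root or capture holds, and consecutive arcs overlap) gives no reason for captured parameters to surround $c_k$. Your plan to certify a limit configuration and pass to large $k$ is a legitimate alternative to the paper's all-odd-$k$ certificates. But it works only once this configuration is in hand, and only in the rescaled coordinate $Z_{w_k}$, since at $c_*$ itself the loops have collapsed to a point.

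Finally, Step~3 is unsupported as stated. With escape radius $(n-1)/(|c|-1)\approx\sqrt n+1$ at $|c|\approx\sqrt n$, a node can keep on the order of $2\sqrt n$ children. Over depth about $m_k$ the disk tree is therefore bushy, and nothing justifies ``finitely many branch types''. The paper's parallelogram enclosure makes the admissible children of each node an interval in $A_N$. It leaves the single branch $\rho[k]$ for $m_k-9$ generations, and ends with one seven-generation pruning pattern transported in $k$ (Lemmas~\ref{lem:early-prefix}--\ref{lem:extinction}).
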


For $n=2$, writing $\lambda:=1/c$, the affine map
\[
        T_\lambda(z)=\frac{1-\lambda}{2}z+\frac12
\]
conjugates our normalized IFS to the family studied by Calegari--Koch--Walker~\cite{CalegariKochWalker2017}. Thus the new contribution of the present paper concerns $n\ge 3$.

The structural core of the paper is Theorem~\ref{thm:stationary}. For each odd depth $k=1,3,5,\ldots$, it produces four admissible finite-capture charts whose selected boundary arcs form a Jordan curve $\Gamma_{n,k}\subset\M_n$.  The midpoint word of the four corresponding symbolic addresses is not admissible, so the central chart that would normally fill the loop is absent.  A finite inverse-tree certificate places an explicit witness parameter $c_{n,k}\notin\M_n$ in the bounded complementary component, and a no-return certificate separates the holes obtained at different odd depths.

The same stationary mechanism also identifies a canonical limit point. Corollary~\ref{cor:renorm-point} shows that the witness sequence converges to the unique zero, in a common witness box, of the explicit degree-$13$ polynomial $Q_n$ from Lemma~\ref{lem:exact-recursion}.  This gives an algebraic boundary point of $\M_n$ at which infinitely many of the holes accumulate.

The proof uses two finite-capture inputs from the companion preprint~\cite[Proposition 2.1 and Theorem 3.16]{EspiguleJuher2026}: the marked-point criterion and the canonical trap--enclosure framework. Section~\ref{sec:finite-capture} restates these inputs in the precise form needed below, and Appendix~\ref{app:imported-inputs} records the imported statements and the reductions used here. The restricted-root formulation is proved in the present paper from the marked-point criterion and the radix description of the difference attractor. The construction of the holes---the base symbolic configuration, the stationary transport, the witness-box analysis, the local branch geometry, and the inverse-tree extinction---is then carried out internally. The finite algebraic parts are organized in Appendix~\ref{app:exact-verification}; each item is reduced to an explicit sign, root-count, or non-intersection check. The parameter-space figures are finite-depth renderings obtained from the finite-capture algorithm summarized in Appendix~\ref{app:certified-rendering}; the proof does not rely on visual inspection of the figures.

\begin{figure}[t]
\centering
\includegraphics[width=0.98\textwidth]{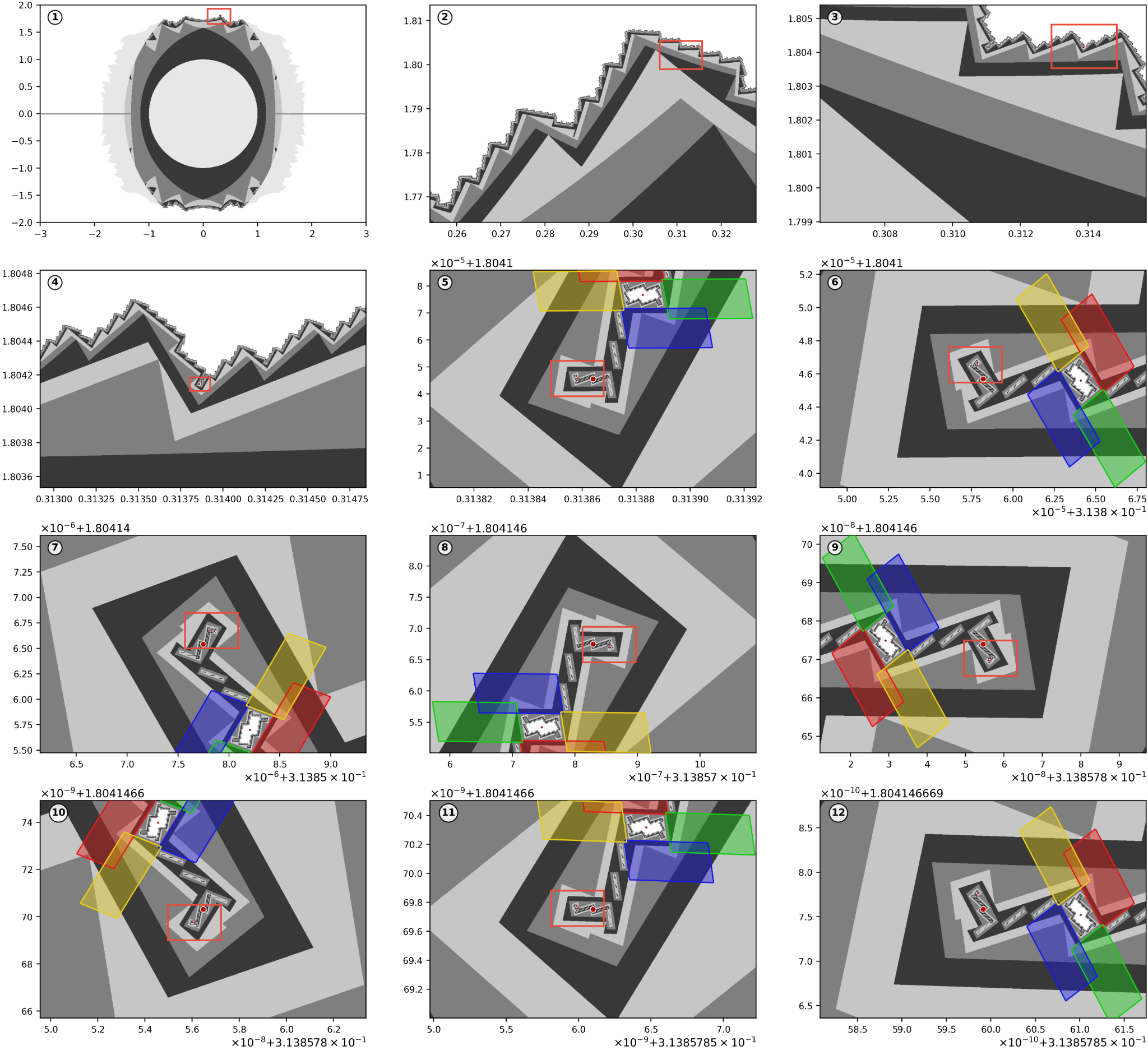}
\caption{Finite-depth zooms of $\M_3$ along the stationary family, generated by the finite-capture inverse-iteration procedure. In panels 1--11 the highlighted rectangle marks the region magnified in the next panel. The first visible hole appears in panel 5. In panels 5--12 the marked point is the witness parameter at the displayed level. The rendering protocol is described in Appendix~\ref{app:certified-rendering}.}
\label{fig:zooms}
\end{figure}

Figure~\ref{fig:zooms} provides a finite-depth guide for $n=3$. Each panel magnifies the highlighted rectangle from the preceding one; the first visible hole appears in panel 5. Figures~\ref{fig:center-combinatorics} and~\ref{fig:base-four-chart} summarize the missing-center mechanism and the base four-chart configuration, while Figure~\ref{fig:n345-comparison} gives side-by-side finite-depth renderings for $n=3,4,5$. The paper is organized as follows. Section~\ref{sec:finite-capture} records the finite-capture inputs and the loop-witness notation. Section~\ref{sec:ghost} constructs the base symbolic configuration and the corresponding loop. Section~\ref{sec:transport} introduces the stationary renormalization and proves the infinite family theorem. Section~\ref{sec:proof-main} deduces Theorem~\ref{thm:main} and identifies the algebraic accumulation point. Appendix~\ref{app:imported-inputs} records the imported inputs in the form used here; Appendix~\ref{app:stationary-formulas} collects the explicit stationary formulas; Appendix~\ref{app:exact-verification} contains the finite algebraic verifications used in the proof, and Appendix~\ref{app:certified-rendering} records the rendering protocol for the parameter-space figures.

\section{Finite-capture toolkit and loop witnesses}\label{sec:finite-capture}

We use the finite-capture framework developed in~\cite{EspiguleJuher2026} only through the marked-point criterion and the canonical trap--enclosure statement restated in Propositions~\ref{prop:marked-point} and~\ref{prop:trap-enclosure}. The restricted-root formulation, Proposition~\ref{prop:restricted-root}, is then derived in the present paper from these inputs. Appendix~\ref{app:imported-inputs} records the exact imported statements and the short reductions used here.

Set
\[
        N:=2n-1.
\]
The open two-disk lens is
\[
        \X_n:=\{c\in\C\setminus\clD: |c\pm1|<\sqrt{2n}\}.
\]
All parameters constructed in this paper lie in $\X_n\setminus\Rr$.

\subsection*{Difference attractor and backward polynomials}

The difference digit set is
\[
        A_N=A_n-A_n=2D_n=\{-(N-1),-(N-3),\ldots,N-3,N-1\}\subset 2\Z.
\]
Let $E(c,N)$ denote the attractor of the associated difference IFS $f_t(z)=t+z/c$, $t\in A_N$. For $S\subset\C$, define the Hutchinson operator of the difference IFS by
\[
        \Hutch_{c,N}(S):=\bigcup_{t\in A_N}f_t(S).
\]
The first imported input is the marked-point criterion, proved in~\cite[Proposition 2.1]{EspiguleJuher2026} in the present normalization.

\begin{proposition}[Marked-point characterization]\label{prop:marked-point}
For every $c\in\C\setminus\clD$ one has
\[
        c\in\M_n\quad\Longleftrightarrow\quad 2c\in E(c,N).
\]
\end{proposition}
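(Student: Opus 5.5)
The plan is to reduce connectedness of $E(c,n)$ to an intersection condition between first-level pieces, and then to rewrite that condition as membership of $2c$ in the difference attractor.

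\emph{Step 1 (radix description).} Since $|c|>1$, iterating the IFS gives
\[
E(c,n)=\Bigl\{\textstyle\sum_{k\ge0}a_kc^{-k}: a_k\in A_n\Bigr\}.
\]
Consequently
\[
E(c,n)-E(c,n)=\Bigl\{\textstyle\sum_{k\ge0}e_kc^{-k}: e_k\in A_n-A_n=A_N\Bigr\}=E(c,N),
\]
because the difference of two digit sequences is an arbitrary sequence in $A_n-A_n$. For $t,s\in A_n$ with $s-t=2j$, we have $t+x/c=s+y/c$ with $x,y\in E(c,n)$ if and only if $2jc=x-y$. Hence
\[
f_t(E)\cap f_s(E)\neq\emptyset\iff 2jc\in E(c,N).
\]
In particular, the condition depends only on $j$, so all pairs of adjacent pieces intersect or none do.

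\emph{Step 2 (Hata's criterion).} I would invoke the standard fact that the attractor of an IFS of contractions is connected if and only if the intersection graph of its first-level pieces is connected. By Step 1, this graph has vertex set $A_n$ and an edge between $t$ and $t+2j$ exactly when $j\in J:=\{j\ge1: 2jc\in E(c,N)\}$.

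The direction ($\Leftarrow$) is then immediate. If $2c\in E(c,N)$, then $1\in J$, so every adjacent pair of pieces is joined and the path $-n+1,-n+3,\dots,n-1$ connects the graph.

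\emph{Step 3 (the converse, main obstacle).} Connectedness of the graph only guarantees some $j\in J$. Arithmetically it need not give $j=1$: for example $J=\{2,3\}$ already connects four vertices. So I have to show geometrically that $j\in J$ forces $1\in J$.

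My first attempt works with $K:=f_{-n+1}(E)$, a compact continuum (assuming $E$ connected), whose pieces are the translates $K+2m$, $0\le m\le n-1$. Suppose $K\cap(K+2)=\emptyset$. I would try to show that then $K\cap(K+2m)=\emptyset$ for every $m\ge1$, which contradicts connectedness of the graph. This is a planar translation property of continua, in the spirit of the universal-chord theorem. To prove it I would separate $K$ from $K+2$ by a curve, namely a component of the boundary of the unbounded component of $\C\setminus(K\cup(K+2))$. I would then argue that the $2$-periodic union $\bigcup_{m\in\Z}(K+2m)$ is separated into pairwise disjoint translates by the corresponding periodic family of curves.

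If this planar argument proves delicate, the fallback is self-similarity. Take the minimal $j_0\in J$ and suppose $j_0\ge2$. I would use the expansion $j_0c=\sum_{k\ge0}d_kc^{-k}$ with $d_k\in D_n$, together with the $\{-1,0,1\}$-shifted rewriting of digits (which stays in $D_n$ because $|D_n|=2n-1\ge3$), to manufacture an element of $J$ strictly smaller than $j_0$. I expect this step to be the only substantive difficulty.

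Finally, I would note that $2c\in E(c,N)$ rewrites directly as $1+\sum_{k\ge1}d_kc^{-k}=0$ after dividing by $2c$ and reversing signs. This is the form used in Proposition~\ref{prop:restricted-root}.
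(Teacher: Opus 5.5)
\paragraph{The gap: the forward direction is not proved.}
Your Steps~1 and~2 are correct. They are exactly the reduction the paper records in Appendix~\ref{app:marked-point}, namely $f_t(E_0)\cap f_s(E_0)\neq\varnothing\iff c(t-s)\in E_0-E_0=E(c,N)$. The paper then imports the criterion from the companion preprint. The direction $c\in\M_n\Rightarrow 2c\in E(c,N)$, which you rightly single out, is not proved in your proposal, and neither sketch closes it.

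\emph{First sketch.} The boundary of the unbounded component of $\C\setminus(K\cup(K+2))$ is contained in $K\cup(K+2)$. Each of its components lies in $K$ or in $K+2$, so it cannot separate $K$ from $K+2$. Even granting a Jordan curve $\sigma$ with $K$ inside and $K+2$ outside, nothing prevents $\sigma+2m$ from meeting $K$ for $|m|\ge2$. So the claim that a $2$-periodic family of curves cuts $\bigcup_m(K+2m)$ into disjoint translates presupposes what you want to prove.

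\emph{Fallback.} Replacing the digits of $j_0c=\sum_{k\ge0}d_kc^{-k}$ by $d_k+\epsilon_k$, with $\epsilon_k\in\{-1,0,1\}$, changes the sum by $\sum_k\epsilon_kc^{-k}$. That quantity has no relation to $c$, so no smaller element of $J$ is produced. Moreover, $D_n$ is not closed under such shifts at $\pm(n-1)$.

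\paragraph{The missing ingredient.}
What you need is a genuinely planar fact, the universal chord theorem for plane continua (Hopf): if a continuum $K\subset\C$ satisfies $K\cap(K+v)=\varnothing$, then $K\cap(K+mv)=\varnothing$ for every integer $m\neq0$. One proof goes as follows.
\begin{enumerate}[label=(\roman*)]
\item Let $\widehat K$ be $K$ together with the bounded components of $\C\setminus K$.
\item Neither of $K$, $K+v$ can lie in a bounded complementary component of the other. Otherwise $\widehat K+v\subseteq\widehat K$ or $\widehat K-v\subseteq\widehat K$, which is impossible for a nonempty compact set (maximize $\Repart(\bar v z)$). From this one checks that $\widehat K\cap(\widehat K+v)=\varnothing$.
\item Since $\widehat K$ does not separate the plane, it has an open topological disk neighbourhood $U$ with $U\cap(U+v)=\varnothing$.
\item Brouwer's free-disk lemma, applied to the fixed-point-free translation $z\mapsto z+v$, gives $U\cap(U+mv)=\varnothing$ for all $m\neq0$.
\end{enumerate}

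Apply this with $K=f_{-n+1}(E)$, which is a continuum when $E$ is connected, and $v=2$. If adjacent pieces were disjoint, all $n\ge2$ pieces $K+2i$ would be pairwise disjoint nonempty compact sets. That contradicts the connectedness of $E$. Hence $1\in J$, i.e.\ $2c\in E(c,N)$.

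With this lemma supplied or cited, your argument becomes a complete, self-contained proof of the criterion the paper takes from the companion preprint.
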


\begin{proof}
This is~\cite[Proposition 2.1]{EspiguleJuher2026}. Section~\ref{app:marked-point} recalls how the criterion enters the present normalization.
\end{proof}

\begin{proposition}[Canonical trap and enclosure]\label{prop:trap-enclosure}
For every $c\in\X_n\setminus\Rr$, the companion paper~\cite{EspiguleJuher2026} provides a canonical open trap $\Ctrap(c)$ and a canonical closed enclosure $\Enc(c,N)$ with
\[
        \Ctrap(c)\subset \Int(E(c,N)),\qquad E(c,N)\subset \Enc(c,N),
        \qquad
        \Ctrap(c)\subset\Hutch_{c,N}(\Ctrap(c)).
\]
The trap $\Ctrap(c)$ is an open parallelogram centered at $0$. The explicit half-width formulas used later are recalled in Appendices~\ref{app:stationary-formulas} and~\ref{app:exact-verification}.
\end{proposition}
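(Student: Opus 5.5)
The plan is to build both objects by hand from the contraction $z\mapsto z/c$ and the equally spaced digit set $A_N=\{-(N-1),\ldots,N-1\}$, and then to convert the two set inclusions into statements about $E(c,N)$ through the standard fixed-point properties of the Hutchinson operator $\Hutch_{c,N}$.

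\emph{Enclosure.} Take $\Enc(c,N)$ to be a closed convex set $K$, centered at $0$ and symmetric, with $\Hutch_{c,N}(K)\subset K$. The crudest choice is the disk of radius
\[
R=\frac{N-1}{1-|c|^{-1}}.
\]
For it, $|t+z/c|\le (N-1)+R/|c|=R$ whenever $|z|\le R$ and $t\in A_N$. A tighter canonical choice, a parallelogram adapted to the same directions as the trap below, is handled by the same vertex check. Since $\Hutch_{c,N}$ is a contraction in the Hausdorff metric and $K$ is forward invariant, we get $\Hutch^k_{c,N}(K)\downarrow E(c,N)$. Hence $E(c,N)\subset K$.

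\emph{Trap, reduction to a covering problem.} Because $c\notin\Rr$, the numbers $1$ and $1/c$ form an $\Rr$-basis of $\C$. I would look for
\[
P=\{\alpha+\beta/c:\ |\alpha|<a,\ |\beta|<b\},
\]
an open parallelogram centered at $0$. The required property $P\subset\Hutch_{c,N}(P)$ says that the $N$ translates $t+P/c$, $t\in A_N$, cover $P$. Once this holds, the interior statement follows as usual. From $P\subset\Hutch^k_{c,N}(P)\subset\Hutch^k_{c,N}(\ol P)$ and $\Hutch^k_{c,N}(\ol P)\to E(c,N)$ in the Hausdorff metric, we get $\ol P\subset E(c,N)$, because $E(c,N)$ is closed. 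Since $P$ is open, $P\subset\Int E(c,N)$.

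\emph{Trap, the covering condition.} The translates are spaced $2$ apart along the real axis. The set $P/c$ is again a parallelogram, with edge vectors $2a/c$ and $2b/c^2$. The union of consecutive translates is a ``staircase'' strip. It covers $P$ exactly when two things hold:
\begin{enumerate}
\item consecutive translates overlap in a set whose vertical thickness exceeds the height of $P$;
\item the outermost translates $\pm(N-1)+P/c$ reach past the vertices of $P$.
\end{enumerate}
Writing $c=u+\ii v$ and taking real and imaginary parts, each requirement becomes finitely many linear inequalities in $(a,b)$ whose coefficients are rational in $u$ and $v$. So the problem reduces to showing that this linear system has a solution with $a,b>0$. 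I would then define the canonical trap $\Ctrap(c)$ by an explicit choice, for instance the extremal $a,b$ with a fixed safety margin, so that it depends continuously on $c$.

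\emph{Main obstacle.} The hard step is proving feasibility of this system throughout $\X_n\setminus\Rr$. I expect elimination of $a$ and $b$ to leave a condition of the form $|c\pm1|^2<2n=N+1$. This is plausible because the total real span $2(N-1)$ of the digits must beat a loss of size about $|c|^2$ coming from the side directions. I would first treat the vertex inequalities at the two extreme digits, then the overlap condition between neighbors, and verify with an explicit choice of $(a,b)$ that both hold strictly on the lens. The degenerate limit $v\to0$ is excluded precisely by the hypothesis $c\notin\Rr$.
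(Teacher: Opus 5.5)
Your framework is sound. A closed forward-invariant set contains $E(c,N)$, and an open $P$ with $P\subset\Hutch_{c,N}(P)$ lies in $\Int E(c,N)$ by the Hausdorff-limit argument you give. Your ansatz is also the right one. For $c=x+\ii y$ one has $\Impart\bigl(c(\alpha+\beta/c)\bigr)=\alpha y$ and $\Impart(\alpha+\beta/c)=-\beta y/|c|^2$. Hence the canonical trap $\{|\Impart z|<(N-2x)y/|c|^2,\ |\Impart(cz)|<Ny\}$ of Appendix~\ref{app:stationary-formulas} is exactly your $P$ with $a=N$, $b=N-2x$.

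There is, however, a genuine gap: the proposal stops exactly where the proof has to happen. The paper does not prove this proposition; it imports it from~\cite{EspiguleJuher2026}. A self-contained argument therefore has to exhibit $(a,b)$ and verify $P\subset\Hutch_{c,N}(P)$ on all of $\X_n\setminus\Rr$. You only predict that elimination gives $|c\pm1|^2<2n$. Your covering conditions (1)--(2) are heuristic rather than a derivation. Also, a ``safety margin'' would produce a trap different from the one whose half-widths enter \eqref{eq:chart-signs} and all later certificates.

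The missing step is short if you work with inverse branches in your coordinates. From $c^2-2xc+|c|^2=0$ you get $c=2x-|c|^2/c$, hence
\[
g_t\Bigl(\alpha+\frac{\beta}{c}\Bigr)=\bigl(\beta-2x(t-\alpha)\bigr)+\frac{|c|^2(t-\alpha)}{c}.
\]
So $P\subset\Hutch_{c,N}(P)$ means: for all $|\alpha|<a$ and $|\beta|<b$ there is $t\in A_N$ with $|c|^2|t-\alpha|<b$ and $|\beta-2x(t-\alpha)|<a$.

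Take $a=N$ and $b=N-2|x|$. By the symmetry $z\mapsto -z$ you may assume $\beta\ge0$. For $x\ge0$, let $t$ be the least even integer exceeding $\alpha-1$; for $x<0$, take the greatest even integer below $\alpha+1$. Then $t\in A_N$ and $s:=t-\alpha$ satisfies $|s|\le1$. For $x\ge0$ the two cases are:
\begin{itemize}
\item if $s\ge0$, then $\beta-2xs\in[-2x,\beta]\subset(-N,N)$;
\item if $s<0$, then $0\le\beta+2x|s|<(N-2x)+2x=N$.
\end{itemize}
The only remaining condition is $|c|^2|s|<N-2|x|$. It follows from $|s|\le1$ and $|c|^2+2|x|<N$, and the latter is exactly the lens condition $|c\pm1|^2<2n$. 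So your predicted condition is confirmed, with no margin needed.

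Finally, your disk does prove $E(c,N)\subset\Enc$ in the abstract, but it is not the enclosure the paper uses later. Lemma~\ref{lem:interval-criterion} and the H4 pruning tables need the parallelogram $\{|\Impart z|\le V_E(c,N),\ |\Impart(cz)|\le |c|S_E(c,N)\}$. Its containment of $E(c,N)$ follows by bounding $z=\sum_{j\ge0}e_jc^{-j}$ termwise.
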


\begin{proof}
This is the restriction to $\X_n\setminus\Rr$ of the canonical trap--enclosure framework of~\cite[Definition 3.11, Corollary 3.13, Proposition 3.15, and Theorem 3.16]{EspiguleJuher2026}.
\end{proof}

\paragraph{Dependency convention.}
Every later appeal to~\cite{EspiguleJuher2026} factors through Propositions~\ref{prop:marked-point} and~\ref{prop:trap-enclosure}; Appendix~\ref{app:imported-inputs} records these inputs and the related reductions used here.

For $t\in A_N$ define the inverse branch $g_t(z)=c(z-t)$. For a finite word $u=u_1\cdots u_m\in\Z^m$, write
\[
        g_u:=g_{u_m}\circ\cdots\circ g_{u_1}.
\]
If $w=(w_1,\ldots,w_m)\in\Z^m$ is a finite integer word, set
\[
        Z_w(c):=g_w(2c)=2c^{m+1}-\sum_{j=1}^m w_jc^{m+1-j}
\]
and
\[
        W_w(c):=\frac{Z_w(c)}{c}=2c^m-\sum_{j=1}^m w_jc^{m-j}.
\]
Since all parameters under consideration lie in $\C\setminus\clD$, the equations $Z_w(c)=0$ and $W_w(c)=0$ are equivalent.

\subsection*{Finite-capture charts and admissible center roots}

For each $c\in\X_n\setminus\Rr$, let $\Ctrap(c)$ denote the canonical trap from Proposition~\ref{prop:trap-enclosure}. Thus
\[
        \Ctrap(c)\subset \Int(E(c,N)),\qquad
        \Ctrap(c)\subset\Hutch_{c,N}(\Ctrap(c)).
\]
This gives open parameter charts
\begin{equation}\label{eq:chart-def}
        \U_u:=\{c\in\X_n\setminus\Rr: g_u(2c)=Z_u(c)\in\Ctrap(c)\},\qquad u\in A_N^m.
\end{equation}

\begin{lemma}[Chart inclusion]\label{lem:chart-inclusion}
For every $u\in A_N^m$, the chart $\U_u$ is open and satisfies $\U_u\subset\Int(\M_n)$. Consequently,
\[
        \partial_{\X_n\setminus\Rr}\U_u\subset\M_n.
\]
\end{lemma}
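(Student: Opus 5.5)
The plan has three steps: show $\U_u$ is open, show $\U_u\subset\M_n$, and then pass to interiors and boundaries.

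\emph{Openness.} The set $\X_n\setminus\Rr$ is open. The map $c\mapsto Z_u(c)$ is a polynomial, hence continuous. Proposition~\ref{prop:trap-enclosure} says $\Ctrap(c)$ is an open parallelogram centered at $0$. Its half-widths and edge directions are given by explicit formulas, recalled in Appendix~\ref{app:stationary-formulas}, which depend continuously on $c\in\X_n\setminus\Rr$. So the condition $Z_u(c)\in\Ctrap(c)$ can be rewritten as finitely many strict inequalities. Writing $Z_u(c)$ in the real coordinates adapted to the parallelogram, each coordinate must be strictly smaller in absolute value than the corresponding half-width, and both sides depend continuously on $c$. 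Strict inequalities between continuous functions cut out an open set, so $\U_u$ is open. The only point needing care is that the parallelogram varies continuously with $c$, i.e.\ it does not degenerate on $\X_n\setminus\Rr$. I expect the explicit formulas to give this directly, since $\Ctrap(c)$ is nonempty and open for every such $c$.

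\emph{Inclusion in $\M_n$.} Fix $c\in\U_u$, so $g_u(2c)\in\Ctrap(c)$. The trap satisfies $\Ctrap(c)\subset\Int E(c,N)\subset E(c,N)$. Self-similarity gives $E(c,N)=\bigcup_{t\in A_N}f_t(E(c,N))$, and $g_t=f_t^{-1}$. Hence for $z\in E(c,N)$ and $t\in A_N$ we have $f_t(z)\in E(c,N)$, i.e.\ $E(c,N)$ is forward invariant under each $f_t$. Applying $f_{u_1}\circ\cdots\circ f_{u_m}$, which is $g_u^{-1}$, gives
\[
2c=f_{u_1}\circ\cdots\circ f_{u_m}\bigl(g_u(2c)\bigr)\in E(c,N).
\]
Here the letters $u_j\in A_N$ are used, so only genuine maps of the difference IFS appear. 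Proposition~\ref{prop:marked-point} then gives $c\in\M_n$. Therefore $\U_u\subset\M_n$. Since $\U_u$ is open in $\X_n\setminus\Rr$, which is itself open in $\C\setminus\clD$, we conclude $\U_u\subset\Int(\M_n)$.

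\emph{Boundary.} Take $c\in\partial_{\X_n\setminus\Rr}\U_u$. Then $c\in\X_n\setminus\Rr\subset\C\setminus\clD$, and $c$ is a limit of points $c_j\in\U_u\subset\M_n$. Proposition~\ref{prop:restricted-root} says $\M_n$ is closed in $\C\setminus\clD$, so $c\in\M_n$.

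The main obstacle is the openness step. It rests on how $\Ctrap(c)$ depends on $c$, which is specified only through the imported framework. I would first verify from the half-width formulas that they are continuous and strictly positive on $\X_n\setminus\Rr$. If that proves delicate, I would fall back on the following argument. For fixed $c_0\in\U_u$, pick a closed sub-parallelogram around $Z_u(c_0)$ that sits inside $\Ctrap(c_0)$ with margin. Then check that both this sub-parallelogram and the perturbed point $Z_u(c)$ remain inside $\Ctrap(c)$ for all $c$ near $c_0$.
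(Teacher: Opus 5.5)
Your proposal is correct and follows the paper's proof step for step. You push $g_u(2c)\in\Ctrap(c)\subset E(c,N)$ forward by $f_{u_1}\circ\cdots\circ f_{u_m}$ to get $2c\in E(c,N)$ and apply Proposition~\ref{prop:marked-point}; you get openness, hence $\U_u\subset\Int(\M_n)$, from the fact that membership in $\Ctrap(\cdot)$ is cut out by strict inequalities between functions continuous in $c$; and you get the boundary inclusion from the closedness of $\M_n$ in $\C\setminus\clD$ (Proposition~\ref{prop:restricted-root}), exactly as the paper does, so the fallback argument for openness is not needed.
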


\begin{proof}
If $c\in\U_u$, then $Z_u(c)=g_u(2c)\in\Ctrap(c)\subset\Int(E(c,N))$ by Proposition~\ref{prop:trap-enclosure}. Applying $f_u$ gives $2c\in E(c,N)$, so $c\in\M_n$ by Proposition~\ref{prop:marked-point}. Because membership in $\Ctrap(\cdot)$ is determined by strict inequalities in continuous functions of the parameter, the same witness persists on a neighborhood of $c$. Hence $c\in\Int(\M_n)$ and $\U_u$ is open. Since $\M_n$ is closed in $\C\setminus\clD$ by Proposition~\ref{prop:restricted-root}, its relative boundary in $\X_n\setminus\Rr$ satisfies $\partial_{\X_n\setminus\Rr}\U_u\subset\M_n$.
\end{proof}

\begin{proposition}[Admissible center-root chart criterion]\label{prop:center-root}
Let $u\in A_N^m$ and let $c\in\X_n\setminus\Rr$ satisfy $W_u(c)=0$. Then $c\in\U_u\subset\Int(\M_n)$.
\end{proposition}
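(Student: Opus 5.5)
The plan is to unwind the definitions and reduce the statement to a single membership: since $W_u(c)=0$, the point $Z_u(c)$ is $0$, and $0$ is the centre of the trap.

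First, because $c\in\X_n\setminus\Rr\subset\C\setminus\clD$, we have $c\neq 0$. The identity $Z_u(c)=c\,W_u(c)$ then gives
\[
        g_u(2c)=Z_u(c)=c\,W_u(c)=0 .
\]
So the image of the marked point $2c$ under the inverse branch $g_u$ is the origin. This step is purely algebraic. It uses only the explicit formula for $Z_w$, which one obtains by expanding $g_{u_m}\circ\cdots\circ g_{u_1}(2c)$ with $g_t(z)=c(z-t)$ and inducting on $m$.

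Second, we use Proposition~\ref{prop:trap-enclosure}. For every $c\in\X_n\setminus\Rr$, the trap $\Ctrap(c)$ is an open parallelogram centred at $0$. A nonempty open parallelogram centred at the origin contains the origin: it is convex and symmetric about its centre, so its centre lies in its interior. The one point to check is that the canonical trap is nondegenerate, i.e.\ its half-widths are strictly positive at every $c\in\X_n\setminus\Rr$. This should follow from the trap being open and satisfying $\Ctrap(c)\subset\Int(E(c,N))$ with $\Ctrap(c)$ nonempty. If the companion framework allowed an empty trap at some parameters, I would instead invoke the explicit half-width formulas recalled in the appendices and verify positivity on the lens $\X_n$. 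I expect this to be the only potential obstacle, and a mild one: everything else is definitional.

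Third, combining the two steps gives $Z_u(c)=0\in\Ctrap(c)$, so $c\in\U_u$ by definition~\eqref{eq:chart-def}. Lemma~\ref{lem:chart-inclusion} then yields $\U_u\subset\Int(\M_n)$, which completes the proof. It is worth noting why $c$ lies in $\M_n$: applying $f_u$ to $0\in E(c,N)$ recovers $2c\in E(c,N)$, and the marked-point criterion of Proposition~\ref{prop:marked-point} applies. This is exactly the reasoning already contained in Lemma~\ref{lem:chart-inclusion}, so we only need to cite that lemma.
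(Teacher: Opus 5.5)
Your proof is correct and follows the paper's argument: $W_u(c)=0$ forces $Z_u(c)=0$, the trap $\Ctrap(c)$ is an open parallelogram centred at $0$ and so contains $0$, hence $c\in\U_u$ by definition, and Lemma~\ref{lem:chart-inclusion} gives $\U_u\subset\Int(\M_n)$. Your extra nondegeneracy check is something the paper leaves implicit. In the first quadrant, where the paper records the half-widths $Ny/|c|$ and $(N-2x)y/|c|^2$, both are positive, because $(x+1)^2<2n$ forces $x<\sqrt{2n}-1<N/2$.
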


\begin{proof}
By Proposition~\ref{prop:trap-enclosure}, the canonical trap $\Ctrap(c)$ is an open parallelogram centered at $0$ and contained in $\Int(E(c,N))$. Thus $0\in\Ctrap(c)$. If $W_u(c)=0$, then $Z_u(c)=0$, so $c\in\U_u$ by \eqref{eq:chart-def}. Now apply Lemma~\ref{lem:chart-inclusion}.
\end{proof}

This is the center-root criterion used below: if $u\in A_N^m$ and $W_u(c)=0$, then $c\in\U_u$. We call the connected component of $\U_u$ containing $c$ the center region determined by the admissible center root $(u,c)$.

\begin{lemma}[Quadrant semialgebraicity of charts]\label{lem:semialgebraic}
For every finite word $u\in A_N^m$, the restriction of $\U_u$ to any open quadrant is semialgebraic. Consequently, on each open quadrant its boundary is contained in a finite union of real-algebraic arcs and points.
\end{lemma}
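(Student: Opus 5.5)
The plan is to write $\U_u$, restricted to a fixed open quadrant $Q$, as a finite Boolean combination of real polynomial sign conditions in $(x,y)=(\Repart c,\Impart c)$. Then I apply the standard structure theory of semialgebraic sets in the plane.

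First, the ambient set. The conditions $|c|>1$ and $|c\pm1|^2<2n$ are polynomial inequalities in $(x,y)$. Removing $\Rr$ is automatic on an open quadrant, since $y\neq0$ there. Hence $(\X_n\setminus\Rr)\cap Q$ is semialgebraic. Next, $Z_u(c)$ is a polynomial in $c$, so $\Repart Z_u$ and $\Impart Z_u$ are real polynomials in $(x,y)$.

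The key step is the trap condition $Z_u(c)\in\Ctrap(c)$. By Proposition~\ref{prop:trap-enclosure}, $\Ctrap(c)$ is an open parallelogram centered at $0$, so it has the form
\[
\{z: |\ell_1(c,z)|<h_1(c),\ |\ell_2(c,z)|<h_2(c)\}.
\]
Here the $\ell_i$ are real-linear functionals in $z$ whose coefficients depend on $c$, and the $h_i$ are the half-widths. I would take the explicit formulas from the companion paper, recalled in the appendices, and check that on each quadrant the edge directions and half-widths are given by algebraic expressions in $x,y$. These expressions involve only rational operations, square roots such as $|c|$, absolute values, and possibly finite max or min over the alphabet.

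Quadrant restriction is exactly what removes sign ambiguities. Absolute values of $x$ or $y$, and choices of branch or orientation such as which edge is leading, become fixed polynomial expressions on $Q$. Any remaining $|\cdot|$, $\sqrt{\cdot}$, $\max$ or $\min$ has a semialgebraic graph, and the composition of semialgebraic functions is semialgebraic by Tarski--Seidenberg. To avoid invoking that theorem one can instead square both sides after splitting into finitely many sign cases. So the condition $|\ell_i(c,Z_u(c))|<h_i(c)$ defines a semialgebraic subset of $Q$, and intersecting the finitely many such conditions with the ambient set gives the first claim.

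For the consequence, I use the fact that the boundary of a semialgebraic set is semialgebraic of dimension at most $1$, since it is closed with empty interior. Planar semialgebraic sets of dimension at most $1$ decompose, via cylindrical algebraic decomposition, into finitely many points and real-analytic arcs lying in zero sets of nonzero polynomials, that is, real-algebraic arcs.

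The main obstacle is purely bookkeeping: confirming that the companion paper's half-width formulas are genuinely semialgebraic on each quadrant. In particular, any dependence on $\arg c$, or any infimum over infinitely many words, would break the argument. I expect the formulas to be finite expressions in $c$, $\bar c$ and $|c|$, which would suffice.
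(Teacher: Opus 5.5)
Your proposal is correct and follows essentially the paper's route. The paper's proof cites the explicit trap half-widths $S=Ny/|c|$ and $V=(N-2x)y/|c|^2$ from Appendix~\ref{app:stationary-formulas}, which on the upper half-plane turn $Z_u(c)\in\Ctrap(c)$ directly into the four polynomial sign conditions $V_u^\pm$, $S_u^\pm$ of \eqref{eq:chart-signs}; there the $|c|$ factors cancel, so no Tarski--Seidenberg or squaring is needed, and the boundary statement then follows from standard semialgebraic stratification, as in your argument. Your explicit inclusion of the lens inequalities $|c|>1$ and $|c\pm1|^2<2n$ is a small point the paper leaves implicit.
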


\begin{proof}
On the upper half-plane, Appendix~\ref{app:stationary-formulas} rewrites the condition $c\in\U_u$ as the four polynomial sign inequalities \eqref{eq:chart-signs}. On any other open quadrant the same argument applies after fixing the signs of $x=\Repart c$ and $y=\Impart c$. Hence $\U_u$ is semialgebraic on each open quadrant. Standard semialgebraic stratification gives the boundary statement.
\end{proof}

Away from finitely many singularities these boundary components are real-analytic arcs, which is the regularity used when boundary pieces are concatenated into Jordan curves.

\subsection*{Exteriority and loop witnesses}

By Proposition~\ref{prop:trap-enclosure}, the same framework also provides a canonical closed enclosure $\Enc(c,N)\supset E(c,N)$. This turns exteriority into a finite inverse-tree question. For $m\ge0$ and $c\in\X_n\setminus\Rr$, let
\[
        T_m(c):=\{u=t_1\cdots t_m\in A_N^m: g_{t_1\cdots t_j}(2c)\in\Enc(c,N)\text{ for every }1\le j\le m\}.
\]
Here $T_0(c):=\{\varnothing\}$. If $T_m(c)=\varnothing$ for some $m$, then no admissible inverse orbit of $2c$ can remain inside the enclosure, hence $c\notin\M_n$.

\begin{proposition}[Finite inverse-tree criterion]\label{prop:inverse-tree}
Let $c\in\X_n\setminus\Rr$. If $T_m(c)=\varnothing$ for some $m\ge0$, then $c\notin\M_n$.
\end{proposition}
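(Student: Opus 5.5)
We argue by contraposition, using the marked-point criterion, Proposition~\ref{prop:marked-point}: assume $c\in\M_n$, so that $2c\in E(c,N)$, and show that $T_m(c)\neq\varnothing$ for every $m\ge0$.

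First we produce an infinite inverse orbit inside the attractor. Since $E(c,N)=\Hutch_{c,N}(E(c,N))=\bigcup_{t\in A_N}f_t(E(c,N))$, any point $z\in E(c,N)$ can be written as $z=f_t(z')$ for some $t\in A_N$ and some $z'\in E(c,N)$. Because $f_t(z')=t+z'/c$, this $z'$ is exactly $g_t(z)=c(z-t)$.

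Start from $z_0:=2c\in E(c,N)$ and apply this observation repeatedly. This gives digits $t_1,t_2,\ldots\in A_N$ such that
\[
z_j:=g_{t_j}(z_{j-1})=g_{t_1\cdots t_j}(2c)\in E(c,N)\qquad\text{for every } j\ge1.
\]
Here we use the convention $g_u=g_{u_m}\circ\cdots\circ g_{u_1}$, so that $g_{t_1\cdots t_j}=g_{t_j}\circ g_{t_1\cdots t_{j-1}}$. This is consistent with the recursive definition of $z_j$.

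Next we pass to the enclosure. By Proposition~\ref{prop:trap-enclosure} we have $E(c,N)\subset\Enc(c,N)$, so $g_{t_1\cdots t_j}(2c)\in\Enc(c,N)$ for all $j$. Hence, for every $m\ge1$, the word $t_1\cdots t_m$ lies in $T_m(c)$. For $m=0$, the set $T_0(c)=\{\varnothing\}$ is nonempty by definition.

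Therefore $T_m(c)\neq\varnothing$ for every $m\ge0$, which contradicts the hypothesis. We conclude that $c\notin\M_n$.

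There is no serious obstacle. The only points to check are the composition convention for $g_u$ and the fact that $c\in\X_n\setminus\Rr$, which is needed for $\Enc(c,N)$ to be defined.
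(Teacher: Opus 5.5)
Your proposal is correct and follows essentially the same route as the paper. You assume $c\in\M_n$, use Proposition~\ref{prop:marked-point} to get $2c\in E(c,N)$, take an inverse address, and observe that every partial iterate $g_{t_1\cdots t_j}(2c)$ lies in $E(c,N)\subset\Enc(c,N)$, so $T_m(c)\ne\varnothing$ for all $m$. The only difference is that you build the address explicitly from the invariance $E(c,N)=\Hutch_{c,N}(E(c,N))$, where the paper simply picks one.
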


\begin{proof}
Assume for contradiction that $c\in\M_n$. By Proposition~\ref{prop:marked-point}, one has $2c\in E(c,N)$. Choose an address $(t_j)_{j\ge1}\subset A_N$ for $2c$ in the difference attractor. Then every partial inverse iterate
\[
        g_{t_1\cdots t_j}(2c)\in E(c,N)\subset\Enc(c,N)\qquad (j\ge1),
\]
so every prefix $t_1\cdots t_m$ belongs to $T_m(c)$. This contradicts $T_m(c)=\varnothing$. Hence $c\notin\M_n$.
\end{proof}

\begin{definition}[Loop witness]\label{def:loop-witness}
A loop witness for $\M_n$ is a pair $(\Gamma,c_*)$ such that
\begin{enumerate}[label=(\roman*)]
\item $\Gamma\subset\M_n$ is a Jordan curve;
\item $c_*$ lies in the bounded component of $\C\setminus\Gamma$;
\item $c_*\in(\C\setminus\clD)\setminus\M_n$.
\end{enumerate}
\end{definition}

\begin{lemma}[A loop witness forces a hole]\label{lem:loop-hole}
If $(\Gamma,c_*)$ is a loop witness, then $\M_n$ has a hole contained in the bounded component of $\C\setminus\Gamma$.
\end{lemma}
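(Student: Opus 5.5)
Let $\Omega$ denote the bounded component of $\C\setminus\Gamma$; by the Jordan curve theorem it is open, connected, and simply connected, with $\partial\Omega=\Gamma$. The plan is to take the connected component $H$ of $(\C\setminus\clD)\setminus\M_n$ containing $c_*$, and to show that $H\subset\Omega$. Then $H$ is bounded, so it is a hole in the sense fixed before Theorem~\ref{thm:main}, and it lies in the bounded component of $\C\setminus\Gamma$ as required.

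First I check that $\Gamma\cap\clD=\varnothing$. This holds because $\Gamma\subset\M_n\subset\C\setminus\clD$ by definition of $\M_n$. Next I need $\clD\cap\Omega=\varnothing$, i.e.\ the unit disk is not enclosed by $\Gamma$. Here the argument uses the connectedness of $\clD\cup\{\infty\}$-type sets rather carefully: $\clD$ is connected and disjoint from $\Gamma$, so it lies entirely in one component of $\C\setminus\Gamma$. The point is to rule out $\clD\subset\Omega$. In the intended applications this is immediate, since the curves $\Gamma_{n,k}$ lie in $\X_n\setminus\Rr$ within a small region of the upper half-plane away from $\clD$, so $\Omega$ does not contain $0$. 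In the abstract lemma I would either add this as an implicit hypothesis or argue as follows. If $\clD\subset\Omega$, then $H$ is still a connected subset of $\C\setminus\Gamma$ containing $c_*\in\Omega$, so $H\subset\Omega$ anyway, and the conclusion holds regardless.

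The key step is this: $H$ is connected, $H\cap\Gamma=\varnothing$ because $\Gamma\subset\M_n$, and $c_*\in H\cap\Omega$. So $H\subset\Omega$, which is bounded; hence $H$ is bounded. It remains to note that $H$ is a genuine connected component of the relative complement. That complement is open in $\C\setminus\clD$ because $\M_n$ is closed there (Proposition~\ref{prop:restricted-root}), so its components are open and nonempty, and $H\ni c_*$ is one of them.

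There is no real obstacle. The only subtle point is the one flagged above: making sure the component is taken inside $\C\setminus\clD$, so that boundedness is not spoiled by $H$ escaping through $\clD$. Since $H\subset\C\setminus\clD$ by construction and connectedness alone traps it in $\Omega$, this resolves itself, and the proof reduces to the Jordan curve theorem together with the closedness of $\M_n$.
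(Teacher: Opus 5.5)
Your proof is correct and is essentially the paper's argument: take the component $H$ of $(\C\setminus\clD)\setminus\M_n$ containing $c_*$; it is connected, disjoint from $\Gamma\subset\M_n$, and meets the bounded component, so the Jordan curve theorem confines it there and makes it bounded. The detour about whether $\clD$ lies inside $\Omega$ is unnecessary, as you yourself conclude.
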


\begin{proof}
Let $H$ be the connected component of $(\C\setminus\clD)\setminus\M_n$ containing $c_*$. Since $H$ is connected, disjoint from $\Gamma$, and contains a point of the bounded component of $\C\setminus\Gamma$, the Jordan curve theorem gives $H$ inside that bounded component. Thus $H$ is bounded.
\end{proof}

\begin{proposition}[Four-chart loop criterion]\label{prop:four-chart}
Let $C_1,C_2,C_3,C_4\subset\X_n\setminus\Rr$ be open sets such that
\[
        \partial_{\X_n\setminus\Rr}C_i\subset\M_n\qquad(i=1,2,3,4).
\]
Assume that there exist pairwise distinct points
\[
        p_{12},p_{23},p_{34},p_{41}\in\X_n\setminus\Rr
\]
and real-analytic arcs
\[
        \gamma_1\subset\partial_{\X_n\setminus\Rr}C_1,
        \quad \gamma_2\subset\partial_{\X_n\setminus\Rr}C_2,
        \quad \gamma_3\subset\partial_{\X_n\setminus\Rr}C_3,
        \quad \gamma_4\subset\partial_{\X_n\setminus\Rr}C_4
\]
with the cyclic incidence pattern
\[
        \gamma_1:p_{12}\to p_{41},\quad
        \gamma_2:p_{23}\to p_{12},\quad
        \gamma_3:p_{34}\to p_{23},\quad
        \gamma_4:p_{41}\to p_{34},
\]
such that the four arcs meet only at their endpoints and meet there transversely. Then
\[
        \Gamma:=\gamma_1\cup\gamma_2\cup\gamma_3\cup\gamma_4
\]
is a Jordan curve contained in $\M_n$.
\end{proposition}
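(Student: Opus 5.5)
The proof splits into two parts. First, $\Gamma\subset\M_n$. Second, $\Gamma$ is a Jordan curve.

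The inclusion is immediate. Each $\gamma_i$ lies in $\partial_{\X_n\setminus\Rr}C_i$, and by hypothesis that boundary is contained in $\M_n$. So $\Gamma=\bigcup_i\gamma_i\subset\M_n$. No appeal to Lemma~\ref{lem:chart-inclusion} is needed, since the boundary inclusion is assumed outright.

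For the Jordan property, we read each real-analytic arc $\gamma_i$ as the image of an injective continuous map $\phi_i:[0,1]\to\X_n\setminus\Rr$, with endpoints as in the incidence pattern. For instance $\phi_1(0)=p_{12}$ and $\phi_1(1)=p_{41}$. We then concatenate the maps in the order
\[
p_{12}\xrightarrow{\gamma_1}p_{41}\xrightarrow{\gamma_4}p_{34}\xrightarrow{\gamma_3}p_{23}\xrightarrow{\gamma_2}p_{12},
\]
reversing parametrizations where necessary. This gives a continuous closed path $\phi:S^1\to\C$ whose image is $\Gamma$.

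It remains to check that $\phi$ is injective on the circle. Each piece is injective, being an arc. Two distinct pieces meet only at common endpoints, by hypothesis. The four endpoints $p_{12},p_{23},p_{34},p_{41}$ are pairwise distinct, so each is the endpoint of exactly the two arcs prescribed by the incidence pattern. Hence the only identifications made in the concatenation are the four junctions, and $\phi$ is injective on $S^1$. A continuous injection from the compact space $S^1$ into the Hausdorff space $\C$ is a homeomorphism onto its image, so $\Gamma$ is a Jordan curve.

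The one point needing care is that "arc" must mean a homeomorphic image of a closed interval that includes its endpoints. In particular, $\gamma_i$ must not pass through a non-adjacent $p_{jk}$ in its interior. This is exactly the content of "meet only at their endpoints," together with distinctness of the four points. If an arc were instead given as an open analytic piece, we would take its closure. The closure still avoids the other arcs except at the junction points, because the meeting hypothesis is stated for the closed arcs.

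Transversality is not needed for the topological conclusion. It rules out tangential self-contact and gives a piecewise-analytic curve with genuine corners, which is convenient for later winding-number arguments. So the main obstacle is only this bookkeeping of arc conventions, not any analytic estimate.
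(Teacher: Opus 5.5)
Your proof is correct and follows the same two-step argument as the paper. The hypothesis $\partial_{\X_n\setminus\Rr}C_i\subset\M_n$ gives $\Gamma\subset\M_n$ directly, and the fact that the arcs meet only at shared endpoints, at four distinct points, makes the concatenation a simple closed curve; you spell this out via injectivity on $S^1$ and the compact--Hausdorff argument, and your remark that transversality is not needed for this topological conclusion is accurate, even though the paper's short proof cites it alongside disjointness.
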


\begin{proof}
The transversality and disjointness assumptions imply that the union is a simple closed curve. Since each arc lies in one of the relative boundaries $\partial_{\X_n\setminus\Rr}C_i\subset\M_n$, we get $\Gamma\subset\M_n$.
\end{proof}

\section{Ghost vacancies and the base loop}\label{sec:ghost}

Inside the nonreal lens, an admissible word $u\in A_N^m$ with $W_u(c)=0$ places $c$ in the center region of the chart $\U_u$ by Proposition~\ref{prop:center-root}. The hole mechanism comes from the complementary situation: the exact-capture equation persists, but the center word leaves the admissible lattice and the corresponding center chart is missing.

\begin{definition}[Ghost parallelogram]\label{def:ghost}
Define the lattice completion
\[
        \widehat A_N:=\{-(N-1),-(N-2),\ldots,N-1\}\subset\Z.
\]
A ghost parallelogram of depth $m$ is a quintuple
\[
        (d^{(1)},d^{(2)},d^{(3)},d^{(4)};w)
\]
with $d^{(i)}\in A_N^m$ and $w\in\widehat A_N^m\setminus A_N^m$ such that
\[
        d^{(1)}+d^{(3)}=d^{(2)}+d^{(4)}=2w.
\]
The word $w$ is the ghost center word. Since $A_N\subset2\Z$, the condition $w\in\widehat A_N^m\setminus A_N^m$ is equivalent to saying that $w$ has at least one odd digit.
\end{definition}

\begin{proposition}[Ghost-vacancy principle]\label{prop:ghost-vacancy}
Let $(d^{(1)},d^{(2)},d^{(3)},d^{(4)};w)$ be a ghost parallelogram of depth $m$, and let $c_*\in\X_n\setminus\Rr$ satisfy $W_w(c_*)=0$. Assume that boundary arcs of the four charts
\[
        \U_{d^{(1)}},\quad \U_{d^{(2)}},\quad \U_{d^{(3)}},\quad \U_{d^{(4)}}
\]
form a Jordan curve $\Gamma$ around $c_*$, and that $c_*\notin\M_n$. Then $(\Gamma,c_*)$ is a loop witness. If, by contrast, the center word were admissible, then the same equation $W_w(c_*)=0$ would place $c_*$ in the center region of the chart $\U_w$ by Proposition~\ref{prop:center-root}; see Figure~\ref{fig:center-combinatorics}.
\end{proposition}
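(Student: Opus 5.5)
The statement is essentially a bookkeeping consequence of the definitions in Section~\ref{sec:finite-capture}. The plan is to verify the three clauses of Definition~\ref{def:loop-witness} one at a time, and then to dispose of the contrasting remark about an admissible center word.

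\emph{Clause (i).} By Lemma~\ref{lem:chart-inclusion}, each chart $\U_{d^{(i)}}$ with $d^{(i)}\in A_N^m$ is open and satisfies
\[
\partial_{\X_n\setminus\Rr}\U_{d^{(i)}}\subset\M_n .
\]
The hypothesis says that boundary arcs of these four charts form a Jordan curve $\Gamma$. I will read this in the sense of Proposition~\ref{prop:four-chart}: there are real-analytic arcs $\gamma_i\subset\partial_{\X_n\setminus\Rr}\U_{d^{(i)}}$ with the cyclic incidence pattern, meeting only at their endpoints and transversely there. Applying Proposition~\ref{prop:four-chart} with $C_i=\U_{d^{(i)}}$ then gives that $\Gamma$ is a Jordan curve contained in $\M_n$. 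Even without that reading, each arc lies in some $\partial_{\X_n\setminus\Rr}\U_{d^{(i)}}\subset\M_n$, so $\Gamma\subset\M_n$ directly.

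\emph{Clauses (ii) and (iii).} "Around $c_*$" means exactly that $c_*$ lies in the bounded component of $\C\setminus\Gamma$, which is clause (ii). Since $c_*\in\X_n\setminus\Rr\subset\C\setminus\clD$ and $c_*\notin\M_n$ by hypothesis, clause (iii) holds. Hence $(\Gamma,c_*)$ is a loop witness, and Lemma~\ref{lem:loop-hole} additionally yields a hole inside $\Gamma$.

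\emph{The contrasting remark.} Suppose hypothetically that $w\in A_N^m$ and $W_w(c_*)=0$ with $c_*\in\X_n\setminus\Rr$. Proposition~\ref{prop:center-root} gives $c_*\in\U_w\subset\Int(\M_n)$, so $c_*$ lies in the center region of $\U_w$, contradicting $c_*\notin\M_n$. This explains why oddness of some digit of $w$, i.e.\ $w\notin A_N^m$, is exactly what permits the vacancy. Note that the parallelogram relation $d^{(1)}+d^{(3)}=d^{(2)}+d^{(4)}=2w$ plays no role in the logic here; it only motivates where $c_*$ sits relative to the four charts, since $W_w=\tfrac12\bigl(W_{d^{(1)}}+W_{d^{(3)}}\bigr)$ on account of $W$ being affine in the word.

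The only potential obstacle is interpretive: making precise that "boundary arcs form a Jordan curve" places the arcs inside the relative boundaries, so that Lemma~\ref{lem:chart-inclusion} applies. Everything else is immediate.
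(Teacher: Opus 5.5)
Your proposal is correct and follows the paper's proof, which simply says that the first assertion is immediate from Definition~\ref{def:loop-witness} and the second from Proposition~\ref{prop:center-root}. The only difference is that you make explicit the step $\Gamma\subset\M_n$ via Lemma~\ref{lem:chart-inclusion}, using that each $d^{(i)}\in A_N^m$; the paper leaves this implicit in the phrase ``boundary arcs of the four charts''.
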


\begin{proof}
The first assertion is immediate from Definition~\ref{def:loop-witness}, and the second from Proposition~\ref{prop:center-root}.
\end{proof}

\begin{figure}[t]
\centering
\begin{tikzpicture}[x=0.72cm,y=0.72cm, line join=round, line cap=round]
  \tikzset{
    paneltitle/.style={font=\small\bfseries},
    callout/.style={font=\scriptsize, fill=white, inner sep=1.2pt, rounded corners=1pt},
    pointlabel/.style={font=\footnotesize, fill=white, inner sep=0.8pt, rounded corners=1pt},
    looplabel/.style={font=\footnotesize, fill=white, inner sep=0.8pt, rounded corners=1pt}
  }
  \def\OuterCharts{%
    \path[fill=chartRedFill, draw=chartRed, line width=0.8pt]
      (-2.297,0.489) -- (-2.568,1.944) -- (0.526,1.951) -- (0.766,0.515) -- cycle;
    \path[fill=chartGreenFill, draw=chartGreen, line width=0.8pt]
      (0.976,-0.887) -- (0.706,0.558) -- (3.805,0.586) -- (4.065,-0.856) -- cycle;
    \path[fill=chartBlueFill, draw=chartBlue, line width=0.8pt]
      (-0.526,-1.968) -- (-0.766,-0.514) -- (2.322,-0.526) -- (2.568,-1.943) -- cycle;
    \path[fill=chartGoldFill, draw=chartGold, line width=0.8pt]
      (-3.799,-0.590) -- (-4.065,0.856) -- (-0.959,0.856) -- (-0.706,-0.560) -- cycle;
  }
  \def\CenterQuad{(0.707,0.518) -- (0.908,-0.521) -- (-0.714,-0.514) -- (-0.896,0.501) -- cycle}
  \def\InnerGhost{($(0,0)!0.72!(0.707,0.518)$) --
      ($(0,0)!0.72!(0.908,-0.521)$) --
      ($(0,0)!0.72!(-0.714,-0.514)$) --
      ($(0,0)!0.72!(-0.896,0.501)$) -- cycle}

  \begin{scope}[shift={(-5.6,0)}]
    \node[paneltitle] at (0,2.62) {admissible center word};
    \OuterCharts
    \path[fill=gray!18, draw=black!70, line width=0.8pt] \CenterQuad;
    \fill (0,0) circle (1.9pt);
    \node[pointlabel, below right=1pt] at (0,0) {$c$};
    \node[callout, align=center, anchor=north] at (0,-2.22) {filled center chart\\[-1pt] $\U_u$};
  \end{scope}

  \begin{scope}[shift={(5.6,0)}]
    \node[paneltitle] at (0,2.62) {ghost center word};
    \OuterCharts
    \draw[black, line width=1.0pt] \CenterQuad;
    \path[draw=black!55, densely dashed, line width=0.7pt] \InnerGhost;
    \fill (0,0) circle (1.9pt);
    \node[pointlabel, below right=1pt] at (0,0) {$c_*$};
    \node[looplabel, anchor=south] at (0.02,0.88) {$\Gamma$};
    \node[callout, align=center, anchor=north] at (0,-2.22) {midpoint word\\[-1pt] $w\notin A_N^m$};
  \end{scope}
\end{tikzpicture}
\caption{Local center-chart combinatorics, shown schematically and not to scale. Left: if the center word $u\in A_N^m$ is admissible and $W_u(c)=0$, then the center-root criterion places $c$ in the distinguished connected component of the center chart $\U_u$. Right: if the midpoint word $w\notin A_N^m$, that center chart is absent. The dashed inner quadrilateral marks the missing center-chart position, while the thick black quadrilateral indicates the surrounding Jordan loop $\Gamma\subset\M_n$ enclosing a ghost parameter $c_*\notin\M_n$.}
\label{fig:center-combinatorics}
\end{figure}

From this point on we fix an integer $n\ge3$ and write
\[
        N:=2n-1,
        \qquad a:=N-1=2(n-1),
        \qquad b:=N-3=2(n-2).
\]
Thus $0,\pm a,\pm b,\pm2\in A_N$.

\subsection*{Base words and the base loop}
Define four words of length $18$ in the alphabet $A_N$ by
\begin{align}
 d_1^{(1)}&=(0,-a,-a,a,a,-a,-a,0,a,b,-a,-a,2,a,a,-a,-a,-a),\notag\\
 d_1^{(2)}&=(0,-a,-a,a,a,-a,-a,0,a,a,-a,0,a,0,-a,-a,a,0),\label{eq:base-words}\\
 d_1^{(3)}&=(0,-a,-a,a,a,-a,-a,0,a,a,-a,0,a,0,-a,-b,a,a),\notag\\
 d_1^{(4)}&=(0,-a,-a,a,a,-a,-a,0,a,b,-a,-a,2,a,a,-b,-a,0).\notag
\end{align}
Let $\U_1^{(i)}:=\U_{d_1^{(i)}}$ be the corresponding level-$18$ charts.

\begin{proposition}[Base four-chart loop]\label{prop:base-loop}
For every $n\ge3$ there exist real-analytic arcs
\[
        \gamma_i\subset\partial_{\X_n\setminus\Rr}\U_1^{(i)}\qquad(i=1,2,3,4)
\]
meeting cyclically and transversely at four corner points such that
\[
        \Gamma_{n,1}:=\gamma_1\cup\gamma_2\cup\gamma_3\cup\gamma_4
\]
is a Jordan curve contained in $\M_n$.
\end{proposition}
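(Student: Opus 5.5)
The plan is to verify the hypotheses of the four-chart loop criterion, Proposition~\ref{prop:four-chart}, for the open sets $C_i:=\U_1^{(i)}$. The inclusions $\partial_{\X_n\setminus\Rr}C_i\subset\M_n$ are already supplied by Lemma~\ref{lem:chart-inclusion}, so everything reduces to producing the corner points and boundary arcs with the required incidence and transversality. First I restrict to the upper half-plane: the conjugation symmetry $c\mapsto\bar c$ preserves the alphabet, and all constructed parameters lie in $\X_n\setminus\Rr$. There I write each chart through the four polynomial sign inequalities \eqref{eq:chart-signs}. These express that $Z_{d_1^{(i)}}(c)$ lies in the parallelogram $\Ctrap(c)$, i.e. $|\text{coordinate}_\pm(Z)|<h_\pm(c)$ in the oblique frame adapted to the trap. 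Thus each boundary of $\U_1^{(i)}$ is a union of pieces of the real-algebraic curves $\{P^{(i)}_{\pm,\epsilon}(x,y)=0\}$, in accordance with Lemma~\ref{lem:semialgebraic}.

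Next I choose, for each $i$, one side $\sigma_i\in\{\pm\}\times\{\pm\}$ of the trap parallelogram, so that $\gamma_i$ is the locus where $Z_{d_1^{(i)}}(c)$ lies on that side. The choice is dictated by the ghost geometry. The four words share the prefix $0,-a,-a,a,a,-a,-a,0,a$ and differ only in the tail, and $d_1^{(1)}+d_1^{(3)}$ and $d_1^{(2)}+d_1^{(4)}$ should equal $2w$ for a ghost center word $w$ with an odd digit. Near the ghost root of $W_w$, the values $Z_{d_1^{(i)}}$ are therefore approximately $\pm$ a small vector displaced from $0$ by the tail differences. Each chart then approaches the ghost from one of four directions, and the relevant side is the one facing the ghost. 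The corners $p_{12},p_{23},p_{34},p_{41}$ will be defined as solutions of pairs of equations $P^{(i)}_{\sigma_i}=P^{(i+1)}_{\sigma_{i+1}}=0$. I would locate each corner in an explicit small rectangle by a certified root count (resultant plus Sturm sequence in $x$, with $y$ recovered). Transversality at a corner amounts to the nonvanishing of the Jacobian determinant of the two defining polynomials there, which I would check by an interval sign evaluation on the rectangle.

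Then I verify that each $\gamma_i$, the piece of $\{P^{(i)}_{\sigma_i}=0\}$ between its two corners, is a genuine boundary arc of $\U_1^{(i)}$. This requires three checks. First, the arc is a smooth analytic arc, with no singular points of the curve in a box containing it. Second, along the arc the other three inequalities defining $\U_1^{(i)}$ hold, so that one side of the arc lies in the chart; this is a strict sign check on a compact arc. Third, the arc stays inside $\X_n\setminus\Rr$. Finally, I check that the four arcs are pairwise disjoint except at the shared corners. For nonadjacent arcs this is a non-intersection statement, provable by separating boxes or by a resultant having no real root in the relevant range. For adjacent arcs it amounts to uniqueness of the common point in the box.

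The main obstacle is uniformity in $n\ge3$. The polynomials have degree $18$ in $c$, with coefficients depending on $a=2(n-1)$ and $b=2(n-2)$, and the trap half-widths also depend on $n$, so pointwise certification for finitely many $n$ is not enough. I would first try to rescale: in $\X_n$, with $|c|\sim\sqrt{2n}$, the dominant terms are governed by $a\approx b\approx N$. I would then write all sign conditions as polynomials in $(x,y,1/n)$ and verify them uniformly for $1/n\in(0,1/3]$ by exact subdivision and Sturm/Bernstein sign certificates, in the spirit of Appendix~\ref{app:exact-verification}. If necessary, I would treat finitely many small $n$ separately with the same exact checks.
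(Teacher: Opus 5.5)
Your overall architecture matches the paper's. You take $C_i=\U_1^{(i)}$ and obtain $\partial_{\X_n\setminus\Rr}\U_1^{(i)}\subset\M_n$ from Lemma~\ref{lem:chart-inclusion}. You then verify the hypotheses of Proposition~\ref{prop:four-chart} through certified corners, nonvanishing Jacobians, auxiliary chart signs and exclusion of nonconsecutive intersections. In the paper this is done by Proposition~\ref{prop:branch-selection} and Lemmas~\ref{lem:corner-systems} and~\ref{lem:local-sign-region}, with the sides $V^-,S^-,V^+,S^+$ selected for charts $1,2,3,4$. For a single fixed $n$ your plan is workable.

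\textbf{The gap.} It lies in the step you flag as the main obstacle, uniformity over all $n\ge3$. The mechanism you propose cannot work, because the configuration degenerates as $n\to\infty$. The loop surrounds the simple root $c_{n,1}$ of $W_{w_1}$ in $\B_n$, so it sits at $\Impart c\approx\sqrt n$, not at $|c|\sim\sqrt{2n}$. Its diameter in the $c$-plane is of order $|\zeta|/|L_{n,1}|$, where $L_{n,1}=Z'_{w_1}(c_{n,1})$. From $W_{w_1}=(2c^8Q_n+\Phi_n)/(c^3+1)$ one sees heuristically that $|L_{n,1}|$ grows roughly like $n^{9}$. Meanwhile the $\zeta$-extent of the loop is set by the trap half-widths, about $2\sqrt n$ and $N$. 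So the four corners coalesce at a polynomial rate. No fixed subdivision in $(x,y,1/n)$, nor in a global rescaling $c=\sqrt n\,C$, can separate them or certify transversality uniformly; such certificates degenerate as $1/n\to0$.

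\textbf{The missing idea.} You need to anchor at the witness root and rescale there, using the exact stationary decomposition. By Lemma~\ref{lem:stationary-diffs}, $Z_{d_1^{(i)}}=Z_{w_1}+\Delta_i$, with $\Delta_1=-\Delta_3=cA_n(c)$ and $\Delta_2=-\Delta_4=-c^2B_n(c)$. In the coordinate $\zeta=L_{n,1}(c-c_{n,1})$ one has $Z_{w_1}\approx\zeta$. Hence chart $i$ is approximately $\Ctrap(c_{n,1})-\Delta_i(c_{n,1})$: four translates of the trap surrounding $\zeta=0$.

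Note that these displacements are not ``small''. They must be of trap size for consecutive translates to overlap while excluding $0$, which is exactly why the construction has to be anchored at $c_{n,1}$. This picture predicts the corners explicitly at $(u_{12},\alpha)$, $(u_{23},-\alpha)$, $(-u_{12},-\alpha)$, $(-u_{23},\alpha)$ via \eqref{eq:alpha}--\eqref{eq:u12u23}. The paper certifies them by Poincar\'e--Miranda on rectangles of fixed half-width $\eta$, reducing modulo $W_{w_1}(c_{n,1})=0$. Only in this normalization does an $n$-uniform certificate even make sense.

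\textbf{Two smaller points.}
\begin{enumerate}
\item Nonsingularity of a branch on a box does not show that the two corners are joined by a single component inside the box. The paper gets this from a partial derivative bounded away from zero together with face signs excluding escape (rows H2.17--H2.20 of Table~\ref{tab:expanded-H2-corners}).
\item Your arc-wise check of the three remaining chart inequalities does suffice for $\gamma_i\subset\partial_{\X_n\setminus\Rr}\U_1^{(i)}$. The paper instead checks them on the whole box $\mathcal N_{n,1}$, because Lemma~\ref{lem:local-sign-region} later needs this to identify the bounded component with the sign region and place the witness inside it.
\end{enumerate}
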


\begin{proof}
By Lemma~\ref{lem:chart-inclusion}, each relative boundary $\partial_{\X_n\setminus\Rr}\U_1^{(i)}$ is contained in $\M_n$. By Proposition~\ref{prop:branch-selection} and Lemmas~\ref{lem:corner-systems} and~\ref{lem:local-sign-region}, the relevant local boundary branches of the four charts are real-analytic, meet transversely at four consecutive corner points, have no nonconsecutive intersections, and form the boundary of the certified local sign region. Hence the hypotheses of Proposition~\ref{prop:four-chart} are satisfied, and the four arcs form a Jordan curve $\Gamma_{n,1}\subset\M_n$. See Figure~\ref{fig:base-four-chart} for the geometry when $n=3$.
\end{proof}

\begin{figure}[t]
\centering
\begin{minipage}[t]{0.47\linewidth}
  \centering
  \includegraphics[width=\linewidth]{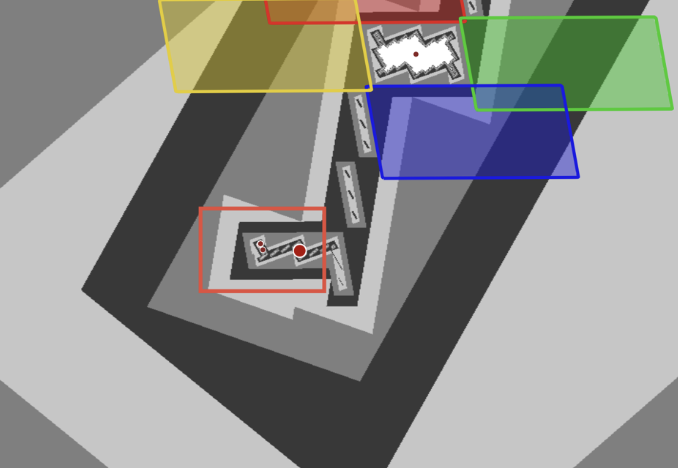}
\end{minipage}\hfill
\begin{minipage}[t]{0.47\linewidth}
\centering
\resizebox{\linewidth}{!}{%
\begin{tikzpicture}[x=0.78cm,y=0.78cm, line join=round, line cap=round]
  \tikzset{
    looppt/.style={font=\footnotesize, fill=white, inner sep=0.6pt, rounded corners=1pt},
    loopname/.style={font=\footnotesize, fill=white, inner sep=0.8pt, rounded corners=1pt}
  }
  \path[fill=chartRedFill, draw=chartRed, line width=0.8pt]
    (-2.297,0.489) -- (-2.568,1.944) -- (0.526,1.951) -- (0.766,0.515) -- cycle;
  \path[fill=chartGreenFill, draw=chartGreen, line width=0.8pt]
    (0.976,-0.887) -- (0.706,0.558) -- (3.805,0.586) -- (4.065,-0.856) -- cycle;
  \path[fill=chartBlueFill, draw=chartBlue, line width=0.8pt]
    (-0.526,-1.968) -- (-0.766,-0.514) -- (2.322,-0.526) -- (2.568,-1.943) -- cycle;
  \path[fill=chartGoldFill, draw=chartGold, line width=0.8pt]
    (-3.799,-0.590) -- (-4.065,0.856) -- (-0.959,0.856) -- (-0.706,-0.560) -- cycle;
  \node[chartRed] at (-0.96,1.46) {$\U^{(1)}_1$};
  \node[chartGreen] at (2.47,0.03) {$\U^{(2)}_1$};
  \node[chartBlue] at (0.95,-1.48) {$\U^{(3)}_1$};
  \node[chartGold] at (-2.47,-0.02) {$\U^{(4)}_1$};
  \coordinate (p12) at (0.707,0.518);
  \coordinate (p23) at (0.908,-0.521);
  \coordinate (p34) at (-0.714,-0.514);
  \coordinate (p41) at (-0.896,0.501);
  \draw[very thick] (p12) -- (p23) -- (p34) -- (p41) -- cycle;
  \node[loopname] at (-0.02,0.86) {$\Gamma_{3,1}$};
  \fill (0,0) circle (2.0pt);
  \node[looppt, below right=1pt] at (0,0) {$c_{3,1}$};
  \fill (p12) circle (1.3pt);
  \node[looppt, above right=1pt] at (p12) {$p_{12}$};
  \fill (p23) circle (1.3pt);
  \node[looppt, below right=1pt] at (p23) {$p_{23}$};
  \fill (p34) circle (1.3pt);
  \node[looppt, below left=1pt] at (p34) {$p_{34}$};
  \fill (p41) circle (1.3pt);
  \node[looppt, above left=1pt] at (p41) {$p_{41}$};
\end{tikzpicture}%
}
\end{minipage}
\caption{Base four-chart configuration for $n=3$. Left: numerical rendering of the first stationary hole and its witness $c_{3,1}$. Right: schematic combinatorics of the four-chart configuration. The points $p_{12},p_{23},p_{34},p_{41}$ are the consecutive intersections of the relevant boundary arcs, and $\Gamma_{3,1}$ denotes the resulting Jordan curve.}
\label{fig:base-four-chart}
\end{figure}

\subsection*{The base ghost word and its witness}
For later use, define the common witness box
\begin{equation}\label{eq:witness-box}
        \B_n:=\left\{c=x+\ii y\in\C:\frac14<x<\frac12,\quad \sqrt n<y<\sqrt n+\frac18\right\}.
\end{equation}
Define the ghost center word
\begin{equation}\label{eq:w1}
        w_1:=\frac{d_1^{(1)}+d_1^{(3)}}2=\frac{d_1^{(2)}+d_1^{(4)}}2.
\end{equation}
Its tenth digit is $(a+b)/2=2n-3$, which is odd; since $A_N\subset2\Z$, this shows that $w_1\notin A_N^{18}$. Consequently the base words form a ghost parallelogram.

\begin{proposition}[Base ghost vacancy]\label{prop:base-ghost}
For every $n\ge3$, the quintuple
\[
        (d_1^{(1)},d_1^{(2)},d_1^{(3)},d_1^{(4)};w_1)
\]
is a ghost parallelogram of depth $18$, and the pair $(\Gamma_{n,1},c_{n,1})$ is a loop witness for $\M_n$, where $c_{n,1}$ is the unique zero of $W_{w_1}$ in the witness box $\B_n$.
\end{proposition}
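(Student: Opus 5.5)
The proof has four parts: the ghost-parallelogram identities, a unique zero of $W_{w_1}$ in $\B_n$, that this zero lies inside $\Gamma_{n,1}$, and that it lies outside $\M_n$. Together with Proposition~\ref{prop:base-loop}, these give the conclusion through Proposition~\ref{prop:ghost-vacancy} and Definition~\ref{def:loop-witness}.

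\textbf{Combinatorial identities.} Compare the four words \eqref{eq:base-words} digit by digit. They share the prefix $(0,-a,-a,a,a,-a,-a,0,a)$. In positions $10$--$18$ we check $d_1^{(1)}+d_1^{(3)}=d_1^{(2)}+d_1^{(4)}$; for example, position $10$ gives $b+a=a+b$, position $12$ gives $-a+0=0-a$, and position $16$ gives $-a-b=-a-b$. This makes $w_1$ well defined, with entries in $\widehat A_N$. The tenth digit is odd, so $w_1\notin A_N^{18}$, and the quintuple is a ghost parallelogram.

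\textbf{Existence and uniqueness of $c_{n,1}$.} Write $W_{w_1}(x+\ii y)=U(x,y)+\ii V(x,y)$ with $n$ as a parameter. I would prove that $W_{w_1}$ has exactly one zero in $\B_n$ by the argument principle on $\partial\B_n$. Concretely, I would show that the winding number of $W_{w_1}$ around $0$ along the four edges equals $1$, using sign certificates for $U$ and $V$ on each edge. To make this uniform in $n\ge3$, I would substitute $y=\sqrt n+s$ with $s\in(0,1/8)$ and expand. The dominant term $2c^{18}$ governs the phase, and the digit terms are bounded relative to it, since $|c|\approx\sqrt n$ and $|w_j|\le 2n$. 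This reduces the check to polynomial inequalities in $(x,s,1/\sqrt n)$. These can be verified for large $n$ by a leading-order estimate and for small $n$ by direct exact computation, as in Appendix~\ref{app:exact-verification}. An alternative is a Rouché comparison with a simpler model polynomial whose root in $\B_n$ is explicit.

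\textbf{Enclosure by $\Gamma_{n,1}$.} The curve $\Gamma_{n,1}$ is the boundary of the certified local sign region of Lemma~\ref{lem:local-sign-region}. So it suffices to show that $c_{n,1}$ satisfies the strict sign conditions defining that region, that is, that it lies strictly on the inner side of each of the four selected branches. Since $c_{n,1}$ is only known to lie in $\B_n$, I would either check these inequalities on the whole box, or isolate $c_{n,1}$ in a much smaller rational box by bisection and check them there. This uses the fact that $Z_{d^{(i)}}(c)=Z_{w_1}(c)\pm\tfrac12\sum_j(d^{(i)}_j-d^{(k)}_j)c^{19-j}$. At $c_{n,1}$ this gives $Z_{d^{(i)}}(c_{n,1})=\pm\delta(c_{n,1})$, where $\delta$ is the half-difference vector. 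The four points $\pm\delta$, $\pm\delta'$ therefore lie symmetrically around the trap center. That symmetry is exactly the geometry expected at the center of a missing chart.

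\textbf{Exteriority, the main obstacle.} To show $c_{n,1}\notin\M_n$, I would apply Proposition~\ref{prop:inverse-tree} and exhibit an $m$ with $T_m(c_{n,1})=\varnothing$. The plan is to start from $2c$ and prune. At each level, only $O(1)$ branches $g_t$ keep the point inside $\Enc(c,N)$, because the enclosure has width comparable to $N$ and $|c|\approx\sqrt n$. The surviving words should be forced to track prefixes of $w_1$. At the tenth digit, the required odd value $2n-3$ is unavailable; both neighbours $2n-4=b$ and $2n-2=a$ land at distance about $|c|^{9}\cdot|c|$ times the trap scale, outside the enclosure, after a bounded number of further steps. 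Making this uniform in $n$ is the hard part. The enclosure half-widths, the branch positions, and the uncertainty in $c_{n,1}$ all scale with $n$. I would normalize by $\sqrt n$, express every pruning inequality as a polynomial sign condition in $(x,s,1/\sqrt n)$ over the box, and certify it exactly, as in Appendix~\ref{app:exact-verification}, with a separate finite check for small $n$.
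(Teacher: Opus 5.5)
Your architecture is the paper's. It has the same steps: the midpoint identity with odd tenth digit $2n-3$, Proposition~\ref{prop:base-loop} for $\Gamma_{n,1}$, a unique zero of $W_{w_1}$ in $\B_n$, and the witness sign pattern from $Z_{w_1}(c_{n,1})=0$, which turns $Z_{d_1^{(i)}}(c_{n,1})$ into $\Delta_i(c_{n,1})$ (Lemmas~\ref{lem:witness-sign} and~\ref{lem:local-sign-region}). It then uses exact enclosure pruning to $T_{17}(c_{n,1})=\varnothing$ (Lemma~\ref{lem:extinction}, Proposition~\ref{prop:inverse-tree}) and closes with Proposition~\ref{prop:ghost-vacancy}.

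The step that fails as you describe it is the zero count made uniform in $n$. On $\partial\B_n$ the monomial $2c^{18}$ does not govern the phase. Since $|c|^2\approx n$ and $a=2(n-1)$, the term $ac^{16}$ has the same size. Since $c^2\approx-n$ on $\B_n$, the two nearly cancel: $2c^{18}+ac^{16}=c^{16}(2c^2+a)$ with $|2c^2+a|=O(\sqrt n)$ instead of $O(n)$. Worse, if $2c^{18}$ did dominate on $\partial\B_n$, Rouch\'e would give $W_{w_1}$ as many zeros in $\B_n$ as $2c^{18}$ has, namely none. An explicit low-order model does not help either: $c^{16}(2c^2+a)$ vanishes at $\ii\sqrt{n-1}\notin\overline{\B_n}$. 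The zero comes from cancellation among many terms of comparable size, and this cancellation becomes relatively more severe as $n\to\infty$. So a leading-order estimate cannot handle the large-$n$ regime.

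The paper never estimates $W_{w_1}$ directly. The exact identity \eqref{eq:Phi-n-poly}, $(c^3+1)W_{w_1}(c)=2c^8Q_n(c)+\Phi_n(c)$, compares $W_{w_1}$ with the fixed point $F_n=2Q_n/(c^{10}(c^3+1))$ of the stationary recursion (Corollary~\ref{cor:closed-form}). The zero of $Q_n$ is not explicit either, and $Q_n$ has the same top-order cancellation $c^{11}(c^2+n-1)$. So its winding on $\partial\B_n$ is certified by exact Sturm counts in five ranges of $n$, with a parametric computation in $s=n-30$ for the unbounded range (Proposition~\ref{prop:finite-boundary-data}, Lemma~\ref{lem:boundary-winding}). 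The remainder is controlled by the edgewise margins $|\Phi_n|<2|c|^8|Q_n|$ (Lemma~\ref{lem:rouche-bound}), and Rouch\'e transfers the count (Lemma~\ref{lem:uniform-isolation}). This detour means one winding certificate and one margin serve every depth $k$, since $\widW_{w_k}-F_n=(-c^{-3})^{k-1}(\widW_{w_1}-F_n)$. For the base case alone, an exact regime-by-regime winding certificate for $W_{w_1}$ itself would also work; a dominance estimate will not.

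Two smaller omissions:
\begin{enumerate}
\item You never check $\B_n\subset\X_n\setminus\Rr$ (Lemma~\ref{lem:uniform-isolation}(i)). Without it, neither the enclosure intervals of Lemma~\ref{lem:interval-criterion} nor Proposition~\ref{prop:inverse-tree} is available at $c_{n,1}$.
\item Your pruning picture is not the actual mechanism. After depth $9$ the survivors do not stay on prefixes of $w_1$. Both neighbours $b$ and $a$ of the missing odd digit survive, and the tree follows (up to a transient branch) the prefixes of $d_1^{(1)}$ and $d_1^{(3)}$. It dies only at depth $17$, where the admissible intervals after $\tau_1^{(1)}$ and $\tau_1^{(3)}$ lie entirely below $-a$ and above $a$ respectively (Table~\ref{tab:base-pruning}). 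The deviation from $w_1$ does not simply grow like powers of $c$, because the later digits of the corner words compensate. That is why parameters just across $\Gamma_{n,1}$ lie in the charts $\U_1^{(i)}$.
\end{enumerate}
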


\begin{proof}
The midpoint identity \eqref{eq:w1} and the odd tenth digit show that the five words form a ghost parallelogram. By Proposition~\ref{prop:base-loop}, $\Gamma_{n,1}$ is a Jordan curve contained in $\M_n$. By Lemma~\ref{lem:local-sign-region}, the bounded component of $\C\setminus\Gamma_{n,1}$ is the sign region cut out by the explicit branch functions \eqref{eq:F1}--\eqref{eq:F4}. The witness-box analysis in Lemmas~\ref{lem:witness-sign} and~\ref{lem:uniform-isolation} places the isolated root $c_{n,1}$ in that bounded component. By Lemma~\ref{lem:extinction}, one has $T_{17}(c_{n,1})=\varnothing$, so Proposition~\ref{prop:inverse-tree} gives $c_{n,1}\notin\M_n$. Hence Proposition~\ref{prop:ghost-vacancy} yields the loop witness $(\Gamma_{n,1},c_{n,1})$.
\end{proof}

\section{Stationary transport of the vacancy and infinitely many holes}\label{sec:transport}

The base ghost vacancy is propagated by a stationary operator on words.

\subsection*{The renormalization operator}
Let
\[
        P:=(0,-a,-a,a,a,-a,-a,0,a,b,-a,-a)\in A_N^{12}.
\]
For an integer word $u=(u_1,\ldots,u_m)\in\Z^m$ with $m\ge10$ define
\begin{equation}\label{eq:Rop}
        \Rop(u):=(P,a-u_{10},-u_{11},-u_{12},\ldots,-u_m)\in\Z^{m+3}.
\end{equation}
If $d=(d^{(1)},d^{(2)},d^{(3)},d^{(4)})$ is an ordered $4$-tuple of words, write
\[
        \Rop(d):=(\Rop(d^{(1)}),\Rop(d^{(2)}),\Rop(d^{(3)}),\Rop(d^{(4)})).
\]

\begin{lemma}[Basic properties of $\Rop$]\label{lem:R-basic}
Let $m\ge10$.
\begin{enumerate}[label=(\roman*)]
\item $|\Rop(u)|=|u|+3$ for every $u\in\Z^m$.
\item If $u\in A_N^m$ and $u_{10}\ge0$, then $\Rop(u)\in A_N^{m+3}$.
\item $\Rop$ preserves midpoints:
\[
        \Rop\left(\frac{u+v}{2}\right)=\frac{\Rop(u)+\Rop(v)}2.
\]
\end{enumerate}
\end{lemma}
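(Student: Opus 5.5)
The three assertions follow directly from the definition \eqref{eq:Rop}, so the plan is to unwind that formula once for each item.

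For (i), I will count entries. $P$ has length $12$. The tail $(a-u_{10},-u_{11},\ldots,-u_m)$ has $m-9$ entries. Hence
\[
|\Rop(u)|=12+(m-9)=m+3.
\]

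For (ii), I will check the three pieces of $\Rop(u)$ separately against $A_N=\{-a,-a+2,\ldots,a-2,a\}$.
\begin{itemize}
\item[(a)] The block $P$ uses only the digits $0,\pm a,b$. These lie in $A_N$, as noted after the definition of $a$ and $b$.
\item[(b)] The negated entries $-u_j$ for $j\ge11$ lie in $A_N$ because $A_N=-A_N$ is symmetric.
\item[(c)] For the entry $a-u_{10}$, write $u_{10}\in A_N\subset2\Z$ with $0\le u_{10}\le a$. Then $a-u_{10}$ is even and lies in $[0,a]$. Since $a$ is even, every even integer in $[-a,a]$ belongs to $A_N$.
\end{itemize}
The hypothesis $u_{10}\ge0$ is used exactly in (c). Without it, $a-u_{10}$ could exceed $a$.

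For (iii), I will observe that $\Rop$ is an affine map $\Z^m\to\Z^{m+3}$, extended to $\mathbb Q^m$ so that the midpoint $(u+v)/2$ makes sense. Its linear part is
\[
u\mapsto(0,\ldots,0,-u_{10},-u_{11},\ldots,-u_m),
\]
and its constant part is $(P,a,0,\ldots,0)$. An affine map preserves averages with weights summing to one. Concretely, the first $12$ coordinates of both sides equal $P$. The thirteenth coordinate of the left side is
\[
a-\frac{u_{10}+v_{10}}{2}=\frac{(a-u_{10})+(a-v_{10})}{2},
\]
which matches the right side. The remaining coordinates agree by linearity of negation.

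There is no real obstacle here. The only points needing care are the parity and range check in (ii), and the fact that in (iii) the words are rational rather than integer, which is harmless since \eqref{eq:Rop} makes sense over $\mathbb Q$.
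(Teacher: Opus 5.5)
Your proof is correct and follows the paper's argument: (i) by counting entries, (ii) by checking the block $P$, the reflected digit $a-u_{10}$ (even and in $[0,a]$ because $u_{10}\ge0$), and the sign-changed tail against $A_N$, and (iii) directly from the affine form of the defining formula for $\Rop$. Your remark that this formula extends to rational words is a harmless addition that the paper leaves implicit.
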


\begin{proof}
Part (i) is immediate. For (ii), the prefix $P$ lies in $A_N^{12}$. If $u\in A_N^m$ and $u_{10}\ge0$, then $0\le u_{10}\le a$ and $u_{10}$ is even, so $a-u_{10}\in A_N$. The remaining tail digits are sign changes of digits already in $A_N$. Part (iii) follows directly from \eqref{eq:Rop}.
\end{proof}

\subsection*{Transported words and transported defects}
Define recursively
\[
        d_k:=\Rop^{k-1}(d_1),\qquad w_k:=\Rop^{k-1}(w_1)\qquad(k\ge1),
\]
where $d_1=(d_1^{(1)},d_1^{(2)},d_1^{(3)},d_1^{(4)})$. Write
\[
        d_k=(d_k^{(1)},d_k^{(2)},d_k^{(3)},d_k^{(4)}).
\]
By Lemma~\ref{lem:R-basic}(i), all five words have common length
\[
        m_k:=|d_k^{(i)}|=18+3(k-1)=3k+15.
\]
By Lemma~\ref{lem:R-basic}(iii), the midpoint relation persists:
\begin{equation}\label{eq:midpoint-k}
        w_k=\frac{d_k^{(1)}+d_k^{(3)}}2=\frac{d_k^{(2)}+d_k^{(4)}}2.
\end{equation}

\begin{lemma}[Transport of the lattice defect]\label{lem:lattice-defect}
For every $k\ge1$, the word $w_k$ does not belong to $A_N^{m_k}$. More precisely, its first odd digit occurs at index
\[
        j_k:=10+3(k-1)=m_k-8.
\]
\end{lemma}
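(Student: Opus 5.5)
The plan is induction on $k$, tracking how $\Rop$ moves digit positions. Write $u=(u_1,\ldots,u_m)$ for a word of length $m\ge 10$. By \eqref{eq:Rop}, the new word $\Rop(u)$ has three blocks:
\begin{itemize}
\item positions $1,\ldots,12$ carry the fixed prefix $P$, all of whose digits lie in $A_N\subset 2\Z$ and are therefore even;
\item position $13$ carries $a-u_{10}$;
\item for $i\ge 11$, position $i+3$ carries $-u_i$.
\end{itemize}
Since $a$ is even, parity is preserved digit by digit: $a-u_{10}\equiv u_{10}$ and $-u_i\equiv u_i \pmod 2$. The digits $u_1,\ldots,u_9$ are discarded. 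The identity $j_k=m_k-8$ is only arithmetic, since $m_k-8=3k+7=10+3(k-1)$. So the whole task is to locate the first odd digit of $w_k$.

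\emph{Base case $k=1$.} I will compute $w_1=(d_1^{(1)}+d_1^{(3)})/2$ directly from \eqref{eq:base-words}. The first nine digits of $d_1^{(1)}$ and $d_1^{(3)}$ coincide, namely $(0,-a,-a,a,a,-a,-a,0,a)$. Hence the first nine digits of $w_1$ equal these common even digits. The tenth digit of $w_1$ is $(b+a)/2=2n-3$, which is odd, as already noted after \eqref{eq:w1}. So $j_1=10$.

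\emph{Step $k=1\to2$.} Here $w_2=\Rop(w_1)$. Its first twelve digits form $P$, which is even. Its thirteenth digit is $a-(2n-3)$, which is odd. So $j_2=13$.

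\emph{Step $k\to k+1$ for $k\ge2$.} Now $w_k=\Rop(w_{k-1})$ begins with $P$, so its tenth digit is $P_{10}=b$. Then the thirteenth digit of $w_{k+1}$ is $a-b=2$, which is even. By the induction hypothesis, $j_k\ge 13>10$, and the digits of $w_k$ at indices $11\le i<j_k$ are even, while the digit at $j_k$ is odd. Under $\Rop$ these digits move to positions $14,\ldots,j_k+2$ (even) and $j_k+3$ (odd). Together with the even prefix and the even digit at position $13$, this gives $j_{k+1}=j_k+3$.

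The conclusion is $j_k=10+3(k-1)$. Because $w_k$ has an odd digit and $A_N\subset2\Z$, it follows that $w_k\notin A_N^{m_k}$.

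The only delicate point is the case split between $k=1$ and $k\ge2$. The odd digit sits exactly at the reflected index $10$ only when $k=1$. For $k\ge2$, index $10$ is overwritten by the prefix digit $b$, and the defect is carried along the tail instead. I must check that $a-b=2$ is even, so that no spurious earlier odd digit appears at position $13$.
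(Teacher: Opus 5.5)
Your proof is correct and follows the paper's argument: $\Rop$ replaces the first nine digits by the even block $P$, moves every position $i\ge 10$ to $i+3$, and preserves parity because $a$ is even, so the first odd digit moves from $j_k\ge 10$ to $j_k+3$. The separate case $k\ge2$ is harmless but unnecessary, since the congruence $a-u_{10}\equiv u_{10}\pmod 2$ that you already noted handles position $13$ for every $k$; checking explicitly that the first nine digits of $w_1$ are even is a useful detail that the paper leaves implicit.
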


\begin{proof}
For $k=1$ the tenth digit of $w_1$ is $2n-3$, which is odd. The rule \eqref{eq:Rop} prepends the even block $P$ and then shifts the remaining digits three places to the right, changing only signs. Thus the parity of each existing digit is preserved and the first odd digit index increases by $3$ at each step.
\end{proof}

\begin{corollary}[Transported ghost parallelograms]\label{cor:transported-ghosts}
For every $k\ge1$, the quintuple
\[
        (d_k^{(1)},d_k^{(2)},d_k^{(3)},d_k^{(4)};w_k)
\]
is a ghost parallelogram of depth $m_k$.
\end{corollary}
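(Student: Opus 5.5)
The corollary asks for three things, according to Definition~\ref{def:ghost}. Each $d_k^{(i)}$ must lie in $A_N^{m_k}$. The midpoint relations $d_k^{(1)}+d_k^{(3)}=d_k^{(2)}+d_k^{(4)}=2w_k$ must hold. And $w_k$ must lie in $\widehat A_N^{m_k}\setminus A_N^{m_k}$. I will argue by induction on $k$, with base case Proposition~\ref{prop:base-ghost}. The midpoint relations are exactly \eqref{eq:midpoint-k}, which follows from Lemma~\ref{lem:R-basic}(iii) by induction. Non-membership $w_k\notin A_N^{m_k}$ is Lemma~\ref{lem:lattice-defect}.

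For admissibility of the $d_k^{(i)}$ I will apply Lemma~\ref{lem:R-basic}(ii). This requires $d_k^{(i)}\in A_N^{m_k}$ together with tenth digit $\ge 0$. The tenth digit of $\Rop(u)$ is the tenth entry of $P$, namely $b=2(n-2)\ge0$, because $|P|=12$. So for $k\ge2$ the tenth digit of $d_k^{(i)}$ is always $b\ge0$. For $k=1$, reading \eqref{eq:base-words}, the tenth digits are $b,a,a,b$, all nonnegative. Hence Lemma~\ref{lem:R-basic}(ii) applies at every step, and induction gives $d_k^{(i)}\in A_N^{m_k}$ for all $k$.

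It remains to check $w_k\in\widehat A_N^{m_k}$, that is, every digit has absolute value at most $N-1=a$. Since $w_k$ is the midpoint of two words in $A_N^{m_k}$, each digit is an average of two elements of $[-a,a]$, so it lies in $[-a,a]\cap\Z=\widehat A_N$. Integrality of the digits is automatic: inductively, $w_k$ is obtained from the integer word $w_1$ by prepending $P$, taking $a-u_{10}$, and changing signs of the remaining digits. Alternatively, the sum of two even digits is even, so its half is an integer.

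The only point needing care, and the main (mild) obstacle, is the hypothesis $u_{10}\ge0$ in Lemma~\ref{lem:R-basic}(ii). It is handled by the observation above that the new tenth digit is always $b$, together with direct inspection of the base words.
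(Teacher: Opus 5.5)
Your proof is correct and follows the paper's argument. It uses the midpoint identity \eqref{eq:midpoint-k}, an induction on $k$ via Lemma~\ref{lem:R-basic}(ii) (the base tenth digits are $b,a,a,b$, and every transported word has tenth digit equal to the prefix digit $b\ge0$), and Lemma~\ref{lem:lattice-defect} for $w_k\notin A_N^{m_k}$, plus an extra check that $w_k\in\widehat A_N^{m_k}$ (each digit is the integer average of two elements of $A_N\subset[-a,a]\cap2\Z$), a point the paper leaves implicit.
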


\begin{proof}
The midpoint identity \eqref{eq:midpoint-k} gives
\[
        d_k^{(1)}+d_k^{(3)}=d_k^{(2)}+d_k^{(4)}=2w_k.
\]
For $k=1$ the four corner words belong to $A_N^{18}$ by construction, and their tenth digits are $b,a,a,b$, hence nonnegative. If $d_k^{(i)}\in A_N^{m_k}$ has nonnegative tenth digit, then Lemma~\ref{lem:R-basic}(ii) gives $d_{k+1}^{(i)}=\Rop(d_k^{(i)})\in A_N^{m_{k+1}}$. Moreover, every transported word has tenth digit equal to the fixed prefix digit $b\ge0$. An induction on $k$ therefore shows that all four corner words lie in $A_N^{m_k}$ for every $k\ge1$. By Lemma~\ref{lem:lattice-defect}, the center word $w_k$ does not belong to $A_N^{m_k}$.
\end{proof}

For each $k\ge1$ define the level-$m_k$ charts
\[
        \U_k^{(i)}:=\U_{d_k^{(i)}}\qquad(i=1,2,3,4).
\]
At level $k$ the witness polynomial is
\[
        W_{w_k}(c)=2c^{m_k}-\sum_{j=1}^{m_k}(w_k)_jc^{m_k-j}.
\]
The root-isolation step is organized around a single auxiliary polynomial $Q_n$. The next lemma records the exact recursion behind that reduction.

\begin{lemma}[Exact recursion for the stationary family]\label{lem:exact-recursion}
Let $m\ge10$, and let
\[
        u=(0,-a,-a,a,a,-a,-a,0,a,u_{10},u_{11},\ldots,u_m)\in\Z^m.
\]
Define
\begin{align*}
Q_n(c):={}&c^{13}+(n-1)c^{11}+nc^{10}-(n-1)c^9+2(n-1)c^7\notag\\
&-(n-1)c^5+c^3+(n-1)c^2-(n-1).
\end{align*}
Then
\[
        W_{\Rop(u)}(c)+W_u(c)=2c^{m-10}Q_n(c).
\]
For $c\ne0$, equivalently, if $\widW_u(c):=c^{-|u|}W_u(c)$, then
\begin{equation}\label{eq:recursion-normalized}
        \widW_{\Rop(u)}(c)=2c^{-13}Q_n(c)-c^{-3}\widW_u(c).
\end{equation}
\end{lemma}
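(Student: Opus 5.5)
The identity is a direct coefficient comparison, so I will compute $W_{\Rop(u)}+W_u$ explicitly and show that everything except a fixed degree-$13$ block cancels. The first step is to locate the digits of $\Rop(u)$ precisely. By \eqref{eq:Rop}, $\Rop(u)$ has length $m+3$. Positions $1,\ldots,12$ carry the fixed prefix $P$. Position $13$ carries $a-u_{10}$. For $11\le j\le m$, position $j+3$ carries $-u_j$. Hence
\[
W_{\Rop(u)}(c)=2c^{m+3}-\sum_{j=1}^{12}P_jc^{m+3-j}-(a-u_{10})c^{m-10}+\sum_{j=11}^{m}u_jc^{m-j}.
\]
The crucial observation is that the shifted tail digit $-u_j$ sits at exponent $(m+3)-(j+3)=m-j$. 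This is exactly the exponent of $u_j$ in $W_u(c)=2c^m-\sum_{j=1}^m u_jc^{m-j}$.

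Next I add the two expressions. The tail terms with $j\ge 11$ cancel in pairs. The $u_{10}$ terms also cancel, since $+u_{10}c^{m-10}$ from $\Rop(u)$ meets $-u_{10}c^{m-10}$ from $W_u$. What survives is
\[
2c^{m+3}+2c^m-\sum_{j=1}^{12}P_jc^{m+3-j}-a\,c^{m-10}-\sum_{j=1}^{9}u_jc^{m-j}.
\]
Every exponent here is at least $m-10$, so this equals $c^{m-10}$ times
\[
2c^{13}+2c^{10}-\sum_{j=1}^{12}P_jc^{13-j}-a-\sum_{j=1}^{9}u_jc^{10-j}.
\]
This bracket involves only the fixed prefix $P$ and the prescribed first nine digits $(0,-a,-a,a,a,-a,-a,0,a)$ of $u$. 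In particular it is independent of $u_{10},\ldots,u_m$, which explains why the recursion is stationary.

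The remaining step, and the only place where an error could slip in, is the bookkeeping. I substitute $a=2(n-1)$ and $b=2(n-2)$, with $P_{10}=b$ contributing at $c^{3}$ and $P_{11}=P_{12}=-a$ contributing at $c^2$ and $c^1$. I then collect coefficients degree by degree and check that the bracket equals $2Q_n(c)$. As spot checks:
\begin{itemize}
\item At $c^{11}$, only $-P_2=a=2(n-1)$ contributes.
\item At $c^{10}$, the terms $2$ and $-P_3=a$ give $2n$.
\item At $c^{0}$, only the constant $-a$ contributes, giving $-2(n-1)$.
\end{itemize}
Each of these matches $2Q_n$. I expect the lower-degree coefficients, where prefix and $u$-contributions overlap, to need the most care: at $c^3$ the contributions $-P_{10}$ and $-u_7$ combine, and at $c^2$ the contributions $-P_{11}$ and $-u_8$ combine.

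Finally, the normalized form \eqref{eq:recursion-normalized} follows for $c\ne0$ by multiplying the identity by $c^{-(m+3)}$. Since $|\Rop(u)|=m+3$ by Lemma~\ref{lem:R-basic}(i) and $c^{-(m+3)}W_u=c^{-3}\widW_u$, this gives $\widW_{\Rop(u)}+c^{-3}\widW_u=2c^{-13}Q_n$.
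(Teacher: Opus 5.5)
Your proposal is correct and is essentially the paper's proof: both expand $W_u$ and $W_{\Rop(u)}$ directly, observe that the digits $u_{10},\ldots,u_m$ cancel, and read off a fixed block divisible by $c^{m-10}$. Your spot checks cover only three coefficients, but collecting your bracket in full does give $2c^{13}+ac^{11}+(a+2)c^{10}-ac^{9}+2ac^{7}-ac^{5}+(a-b)c^{3}+ac^{2}-a=2Q_n(c)$, with the $c^{12},c^{8},c^{6},c^{4},c^{1}$ terms vanishing, so writing out that one line completes the argument.
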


\begin{proof}
A direct expansion gives
\begin{align*}
W_u(c)={}&2c^m+ac^{m-2}+ac^{m-3}-ac^{m-4}-ac^{m-5}\\
&\quad +ac^{m-6}+ac^{m-7}-ac^{m-9}
      -\sum_{j=10}^m u_jc^{m-j},
\end{align*}
and
\begin{align*}
W_{\Rop(u)}(c)={}&2c^{m+3}+ac^{m+1}+ac^m-ac^{m-1}-ac^{m-2}\\
&\quad +ac^{m-3}+ac^{m-4}-ac^{m-6}-bc^{m-7}\\
&\quad +ac^{m-8}+ac^{m-9}-(a-u_{10})c^{m-10}
      +\sum_{j=11}^m u_jc^{m-j}.
\end{align*}
Adding the two identities cancels the tail coefficients and yields
\begin{align*}
W_{\Rop(u)}(c)+W_u(c)={}&2c^{m+3}+ac^{m+1}+2nc^m-ac^{m-1}+2ac^{m-3}\\
&\quad -ac^{m-5}+2c^{m-7}+ac^{m-8}-ac^{m-10}.
\end{align*}
This is exactly $2c^{m-10}Q_n(c)$ after substituting $a=2(n-1)$ and $a-b=2$. Dividing by $c^{m+3}$ gives \eqref{eq:recursion-normalized}.
\end{proof}

\begin{corollary}[Closed form for the normalized witness polynomials]\label{cor:closed-form}
Let
\begin{equation}\label{eq:Fn-def}
        F_n(c):=\frac{2Q_n(c)}{c^{10}(c^3+1)}.
\end{equation}
Then, as an identity of rational functions,
\begin{equation}\label{eq:closed-form}
        \widW_{w_k}(c)-F_n(c)=(-c^{-3})^{k-1}\bigl(\widW_{w_1}(c)-F_n(c)\bigr)
\end{equation}
for every $k\ge1$. Equivalently,
\[
        \widW_{w_k}(c)=F_n(c)+(-c^{-3})^{k-1}\bigl(\widW_{w_1}(c)-F_n(c)\bigr).
\]
\end{corollary}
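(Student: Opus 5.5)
The plan is to treat \eqref{eq:recursion-normalized} as a first-order affine recurrence in $k$ and subtract its fixed point. First I check that Lemma~\ref{lem:exact-recursion} applies to every $w_k$. The lemma requires the first nine digits of $u$ to be $(0,-a,-a,a,a,-a,-a,0,a)$ and the length to be at least $10$.

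For $k=1$ this holds directly. Each $d_1^{(i)}$ begins with that nine-digit block, so the midpoint $w_1$ does as well, and $|w_1|=18$. For $k\ge2$, $w_k=\Rop(w_{k-1})$ begins with the prefix $P$, whose first nine digits are exactly this block, and $|w_k|=m_k\ge18$.

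Hence, for every $k\ge1$, as an identity of rational functions,
\[
        \widW_{w_{k+1}}(c)=2c^{-13}Q_n(c)-c^{-3}\widW_{w_k}(c).
\]
This is legitimate because the identity $W_{\Rop(u)}+W_u=2c^{m-10}Q_n$ is polynomial, and dividing by $c^{m+3}$ is an identity in $\C(c)$.

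Next I verify that $F_n$ is the fixed point of the affine map $X\mapsto 2c^{-13}Q_n-c^{-3}X$. Solving $X=2c^{-13}Q_n-c^{-3}X$ gives $X(1+c^{-3})=2c^{-13}Q_n$. Multiplying through by $c^3$ yields $X=2c^{-10}Q_n/(c^3+1)$, which is precisely $F_n$ in \eqref{eq:Fn-def}. Subtracting the fixed-point identity $F_n=2c^{-13}Q_n-c^{-3}F_n$ from the recurrence gives
\[
        \widW_{w_{k+1}}-F_n=(-c^{-3})\bigl(\widW_{w_k}-F_n\bigr).
\]

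Finally, induction on $k$ gives \eqref{eq:closed-form}. The case $k=1$ is trivial, since $(-c^{-3})^0=1$, and the displayed step advances $k$ to $k+1$. The equivalent form follows by rearranging.

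There is no real obstacle. The only points needing care are confirming that the hypothesis of Lemma~\ref{lem:exact-recursion} holds for every $w_k$, including the fact that the midpoint of the corner words has the required nine-digit prefix, and noting that the division by $c^3+1$ is harmless in $\C(c)$.
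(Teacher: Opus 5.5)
Your proposal is correct and follows the paper's proof: both subtract the fixed-point identity $F_n=2c^{-13}Q_n-c^{-3}F_n$ from the normalized recursion \eqref{eq:recursion-normalized} and iterate $\widW_{w_{k+1}}-F_n=-c^{-3}\bigl(\widW_{w_k}-F_n\bigr)$. Your explicit check that every $w_k$ begins with the nine-digit block $(0,-a,-a,a,a,-a,-a,0,a)$ required by Lemma~\ref{lem:exact-recursion} is correct, and it is a detail the paper leaves implicit.
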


\begin{proof}
The identity $F_n(c)=2c^{-13}Q_n(c)-c^{-3}F_n(c)$ is just the definition of $F_n(c)$ rewritten. Subtracting it from \eqref{eq:recursion-normalized} gives
\[
        \widW_{w_{k+1}}(c)-F_n(c)=-c^{-3}\bigl(\widW_{w_k}(c)-F_n(c)\bigr).
\]
Iterating this relation proves the formula.
\end{proof}

\begin{lemma}[Stationary difference polynomials]\label{lem:stationary-diffs}
For $i=1,2,3,4$ define
\[
        \Delta_i(c):=Z_{d_1^{(i)}}(c)-Z_{w_1}(c).
\]
Then for every $k\ge1$ one has
\[
        Z_{d_k^{(i)}}(c)-Z_{w_k}(c)=(-1)^{k-1}\Delta_i(c).
\]
Equivalently, for $c\ne0$,
\[
        W_{d_k^{(i)}}(c)-W_{w_k}(c)=(-1)^{k-1}c^{-1}\Delta_i(c).
\]
Moreover,
\[
        \Delta_1(c)=cA_n(c),\qquad \Delta_2(c)=-c^2B_n(c),\qquad
        \Delta_3(c)=-cA_n(c),\qquad \Delta_4(c)=c^2B_n(c),
\]
where
\begin{align*}
A_n(c)&:=c^8+(n-1)c^6+(n-2)c^5-(n-1)c^4-2(n-1)c^3+c^2+2(n-1)c+2(n-1),\\
B_n(c)&:=c^7+(n-1)c^5+(n-2)c^4-(n-1)c^3-2(n-1)c^2-c+2(n-1).
\end{align*}
\end{lemma}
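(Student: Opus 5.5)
The plan is to use that $Z_w(c)$ depends affinely on the word $w$, so that a difference $Z_u-Z_v$ sees only the digitwise difference $u-v$, and then to check how $\Rop$ acts on such differences. For words $u,v\in\Z^m$ of equal length,
\[
        Z_u(c)-Z_v(c)=-\sum_{j=1}^m (u_j-v_j)\,c^{m+1-j}.
\]
From \eqref{eq:Rop}, the difference $\Rop(u)-\Rop(v)\in\Z^{m+3}$ has zeros in positions $1,\dots,12$, since the common prefix $P$ cancels. Its entry in position $j+3$ is $-(u_j-v_j)$ for $j\ge10$, and the constant $a$ in the thirteenth digit cancels. In $Z_{\Rop(u)}$ the digit in position $j+3$ carries the power $c^{(m+3)+1-(j+3)}=c^{m+1-j}$, which is the same power as digit $j$ in $Z_u$. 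Hence, whenever $u$ and $v$ agree in their first nine digits,
\[
        Z_{\Rop(u)}(c)-Z_{\Rop(v)}(c)=-\bigl(Z_u(c)-Z_v(c)\bigr).
\]

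Next I will check the nine-digit agreement hypothesis along the whole family. All four base words in \eqref{eq:base-words} share the prefix $(0,-a,-a,a,a,-a,-a,0,a)$. Their midpoint $w_1$ from \eqref{eq:w1} therefore shares it as well. Every word of the form $\Rop(u)$ begins with $P$, whose first nine digits are this same prefix, so the hypothesis holds for $d_k^{(i)}$ and $w_k$ at every level $k$. Induction on $k$, with $d_{k+1}^{(i)}=\Rop(d_k^{(i)})$ and $w_{k+1}=\Rop(w_k)$, then gives $Z_{d_k^{(i)}}-Z_{w_k}=(-1)^{k-1}\Delta_i$. Dividing by $c$ yields the $W$ form.

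For the explicit formulas I will use the midpoint relation \eqref{eq:w1}. Since $d_1^{(3)}-w_1=-(d_1^{(1)}-w_1)$, linearity gives $\Delta_3=-\Delta_1$; likewise $\Delta_4=-\Delta_2$. It therefore suffices to compute $\Delta_1$ and $\Delta_2$. Here $d_1^{(1)}-w_1=\tfrac12(d_1^{(1)}-d_1^{(3)})$, and this difference is supported in positions $10$ through $18$. Its entries are
\[
        \tfrac12\,(b-a,\;0,\;-a,\;2-a,\;a,\;2a,\;b-a,\;-2a,\;-2a)
        =(-1,\;0,\;-(n-1),\;2-n,\;n-1,\;2(n-1),\;-1,\;-2(n-1),\;-2(n-1)).
\]
Position $j$ contributes $-(\cdot)\,c^{19-j}$. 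This gives $c^9+(n-1)c^7+(n-2)c^6-(n-1)c^5-2(n-1)c^4+c^3+2(n-1)c^2+2(n-1)c$, which is $cA_n(c)$. The computation of $\Delta_2$ from $\tfrac12(d_1^{(2)}-d_1^{(4)})$ is identical in form and should produce $-c^2B_n(c)$, with the factor $c^2$ coming from the vanishing last entry.

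There is no real obstacle here. The only points needing care are the index bookkeeping showing that the shift by three places preserves the exponents, and the sign and substitution arithmetic with $a=2(n-1)$ and $b=a-2$ in the final expansion.
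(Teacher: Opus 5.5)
Your proof is correct and is essentially the paper's argument. The paper gets the sign flip by subtracting the identities $W_{\Rop(u)}+W_u=2c^{m-10}Q_n(c)$ of Lemma~\ref{lem:exact-recursion} for $u=d_k^{(i)}$ and $u=w_k$; the right-hand sides cancel because both words have the same nine-digit head and the same length. That is exactly the shift-and-negate cancellation you verify directly by linearity, except that your version bypasses $Q_n$ and needs only agreement in the first nine digits.

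The $\Delta_2$ computation you left as ``should produce'' does check out. In positions $10$--$18$, $\tfrac12(d_1^{(2)}-d_1^{(4)})$ has entries $(1,0,n-1,n-2,-(n-1),-2(n-1),-1,2(n-1),0)$, and these give precisely $-c^2B_n(c)$.
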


\begin{proof}
Apply Lemma~\ref{lem:exact-recursion} to the pair of words $d_k^{(i)}$ and $w_k$. Since both have the same fixed initial block, subtraction gives
\[
        W_{d_{k+1}^{(i)}}(c)-W_{w_{k+1}}(c)=-\bigl(W_{d_k^{(i)}}(c)-W_{w_k}(c)\bigr).
\]
Multiplying by $c$ yields
\[
        Z_{d_{k+1}^{(i)}}(c)-Z_{w_{k+1}}(c)=-\bigl(Z_{d_k^{(i)}}(c)-Z_{w_k}(c)\bigr),
\]
and induction on $k$ proves the first formula. The second is immediate from $Z_u=cW_u$. For $k=1$, a direct expansion from \eqref{eq:base-words} and \eqref{eq:w1} gives the displayed formulas for $\Delta_1,\ldots,\Delta_4$.
\end{proof}

\begin{lemma}[Uniform isolation box]\label{lem:uniform-isolation}
Fix $n\ge3$. Then the box $\B_n$ has the following properties:
\begin{enumerate}[label=(\roman*)]
\item $\B_n\subset\X_n\setminus\Rr$;
\item for each $k\ge1$, the polynomial $W_{w_k}$ has exactly one zero in $\B_n$, counted with multiplicity, and no zero on $\partial\B_n$; denote this necessarily simple zero by $c_{n,k}$;
\item every $c_{n,k}$ satisfies
\begin{equation}\label{eq:cnk-imag-bound}
        \Impart(c_{n,k})>\sqrt n,
        \qquad |c_{n,k}|>\sqrt n.
\end{equation}
\end{enumerate}
\end{lemma}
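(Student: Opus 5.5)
Part (i) is a direct geometric check. For $c=x+\ii y\in\B_n$ we have $y>\sqrt n>0$, so $c\notin\Rr$ and $|c|>\sqrt n>1$. For the lens condition we bound $|c\pm1|^2=(x\pm1)^2+y^2\le(3/2)^2+(\sqrt n+1/8)^2=9/4+n+\sqrt n/4+1/64$. This is less than $2n$ as soon as $n-\sqrt n/4>9/4+1/64$, which holds for all $n\ge3$ (at $n=3$ the left side is about $2.57$). Part (iii) then follows from (ii): since $c_{n,k}\in\B_n$, we get $\Impart c_{n,k}>\sqrt n$, and hence $|c_{n,k}|\ge\Impart c_{n,k}>\sqrt n$.

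The core is (ii), and I would reduce it to a single $k$-independent problem using Corollary~\ref{cor:closed-form}. On $\B_n$, where $c\neq0$, the zeros of $W_{w_k}$ coincide with those of $\widW_{w_k}$, and
\[
\widW_{w_k}=F_n+(-c^{-3})^{k-1}(\widW_{w_1}-F_n).
\]
We have $|c^{-3}|<n^{-3/2}<1/5$ on $\B_n$, and $F_n$ is holomorphic there because $c^3\neq-1$ (as $|c|>1$). So I would apply Rouch\'e's theorem on $\partial\B_n$ to the pair $F_n$ and $\widW_{w_k}$. The step requires two facts:
\begin{enumerate}[label=(\alph*)]
\item $Q_n$ has exactly one simple zero in $\B_n$ and none on $\partial\B_n$. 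This is a fixed degree-$13$ polynomial with parameter $n$, so the claim can be checked by an argument-principle or Sturm-type count along the four edges, uniformly in $n$ after the substitution $y=\sqrt n+s$.
\item On $\partial\B_n$ the strict inequality $|\widW_{w_1}-F_n|<n^{3/2}|F_n|$ holds.
\end{enumerate}
Given (b), for every $k\ge2$ we get $|\widW_{w_k}-F_n|\le n^{-3(k-1)/2}|\widW_{w_1}-F_n|<|F_n|$ on $\partial\B_n$. For $k=1$ the inequality $|\widW_{w_1}-F_n|<|F_n|$ is needed directly, and this is the stronger requirement that (b) should be replaced by. Rouch\'e then gives exactly one zero of $W_{w_k}$ in $\B_n$, counted with multiplicity, and none on the boundary, since $|F_n|>|\widW_{w_k}-F_n|\ge0$ there. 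Multiplicity one forces the zero to be simple.

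The main obstacle is establishing $|\widW_{w_1}-F_n|<|F_n|$ on $\partial\B_n$ uniformly for all $n\ge3$. The $k=1$ case is not helped by the contraction factor, so the base polynomial $W_{w_1}$ must already be close to the fixed point $F_n$. My first attempt would be to write
\[
\widW_{w_1}-F_n=\frac{R_n(c)}{c^{18}(c^3+1)}
\]
with an explicit polynomial $R_n$, expected to have small top-degree content because $w_1$ starts with the block $P$. I would then parametrize each edge by $c=x+\ii(\sqrt n+s)$ and reduce the inequality $|R_n|^2<|c^8\cdot 2Q_n|^2$ to the positivity of a polynomial in $(x,s,\sqrt n)$. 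Scaling by the dominant power of $n$, I would verify this positivity by exact sign certificates of the kind gathered in Appendix~\ref{app:exact-verification}, with a finite check for small $n$. If $k=1$ is too tight for Rouch\'e, the fallback is to treat $k=1$ (and possibly $k=2$) separately by a direct certified root count, and to use Rouch\'e with the factor $n^{-3(k-1)/2}$ only for larger $k$.
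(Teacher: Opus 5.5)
Your proposal is correct and is essentially the paper's proof: (i) uses the same elementary bound, and (ii) is Rouch\'e on $\partial\B_n$ between $F_n$ and $\widW_{w_k}$ via Corollary~\ref{cor:closed-form}, using two inputs. The first is the unique zero of $Q_n$ in $\B_n$ (Lemma~\ref{lem:boundary-winding}); the second is the $k=1$ margin $|\Phi_n|<2|c|^8|Q_n|$ on $\partial\B_n$ (your $R_n$ is exactly the paper's $\Phi_n$; Lemma~\ref{lem:rouche-bound}), which propagates to every $k$ because $|c|>1$, with the $k=1$ inequality certified directly by edgewise Sturm checks so that your weaker first version of (b) and the separate-$k=1$ fallback are not needed.
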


\begin{proof}
If $c=x+\ii y\in\B_n$, then $y>0$, so $c\notin\Rr$, and $|c|^2=x^2+y^2>n$, so $c\in\C\setminus\clD$. Moreover,
\[
        |c+1|^2=(x+1)^2+y^2<\left(\frac32\right)^2+\left(\sqrt n+\frac18\right)^2
        =n+\frac14\sqrt n+\frac{145}{64}<2n
\]
for every $n\ge3$, while $|c-1|<|c+1|$ because $x>0$. Hence $\B_n\subset\X_n\setminus\Rr$.

For (ii), set $F_n(c)$ as in \eqref{eq:Fn-def}. By Corollary~\ref{cor:closed-form},
\[
        \widW_{w_k}(c)-F_n(c)=(-c^{-3})^{k-1}\bigl(\widW_{w_1}(c)-F_n(c)\bigr).
\]
Since $c^3+1\ne0$ on $\B_n$, the functions $F_n$ and $Q_n$ have the same zeros there. Lemma~\ref{lem:boundary-winding} shows that $Q_n$ has exactly one zero in $\B_n$, counted with multiplicity, and none on $\partial\B_n$. Moreover, Lemma~\ref{lem:rouche-bound} gives
\[
        |\widW_{w_1}(c)-F_n(c)|<|F_n(c)|\qquad(c\in\partial\B_n).
\]
Because $|c|>1$ on $\partial\B_n$, the same inequality holds with $\widW_{w_k}$ in place of $\widW_{w_1}$ for every $k\ge1$. Rouch\'e's theorem therefore implies that $\widW_{w_k}$, hence also $W_{w_k}$, has the same number of zeros in $\B_n$, counted with multiplicity, as $F_n$, and therefore as $Q_n$, namely one. Denote this zero by $c_{n,k}$. Since the multiplicity count is one, the zero is simple. The inequalities in \eqref{eq:cnk-imag-bound} are immediate from $c_{n,k}\in\B_n$.
\end{proof}

\begin{theorem}[Stationary transport of the vacancy]\label{thm:stationary}
For every $n\ge3$ and every odd integer $k\ge1$:
\begin{enumerate}[label=(\roman*)]
\item the quintuple $(d_k^{(1)},d_k^{(2)},d_k^{(3)},d_k^{(4)};w_k)$ is a ghost parallelogram of depth $m_k$;
\item there exist real-analytic arcs
\[
        \gamma_{1,k}\subset\partial_{\X_n\setminus\Rr}\U_k^{(1)},\quad
        \gamma_{2,k}\subset\partial_{\X_n\setminus\Rr}\U_k^{(2)},\quad
        \gamma_{3,k}\subset\partial_{\X_n\setminus\Rr}\U_k^{(3)},\quad
        \gamma_{4,k}\subset\partial_{\X_n\setminus\Rr}\U_k^{(4)}
\]
meeting cyclically and transversely such that
\[
        \Gamma_{n,k}:=\gamma_{1,k}\cup\gamma_{2,k}\cup\gamma_{3,k}\cup\gamma_{4,k}
\]
is a Jordan curve contained in $\M_n$;
\item the witness parameter $c_{n,k}$ lies in the bounded component $\Omega_{n,k}$ of $\C\setminus\Gamma_{n,k}$ and satisfies $c_{n,k}\notin\M_n$; hence $(\Gamma_{n,k},c_{n,k})$ is a loop witness;
\item if $\ell>j\ge1$ are odd integers, then
\[
        c_{n,\ell}\notin\Omega_{n,j}.
\]
In particular, the holes produced by the odd-level family $(\Gamma_{n,k},c_{n,k})_{k\ge1,\ k\ {\rm odd}}$ are pairwise distinct.
\end{enumerate}
\end{theorem}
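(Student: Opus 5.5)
Part (i) is already Corollary~\ref{cor:transported-ghosts}, so the work is in (ii)--(iv). The plan is to reduce everything to the base case $k=1$ (Propositions~\ref{prop:base-loop} and~\ref{prop:base-ghost}) through the stationary identities. By Lemma~\ref{lem:stationary-diffs}, for odd $k$ the corner differences are unchanged:
\[
Z_{d_k^{(i)}}(c)=Z_{w_k}(c)+\Delta_i(c).
\]
Chart membership $c\in\U_k^{(i)}$ means $Z_{d_k^{(i)}}(c)\in\Ctrap(c)$, which is the condition $Z_{w_k}(c)+\Delta_i(c)\in\Ctrap(c)$. So the four charts at level $k$ are cut out by the same branch functions \eqref{eq:F1}--\eqref{eq:F4} as at level $1$, with only the common term $Z_{w_k}$ changed. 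Restricting to odd $k$ removes the sign $(-1)^{k-1}$ and gives exactly this stationarity.

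Next I would control $Z_{w_k}$ near the witness box. By Corollary~\ref{cor:closed-form}, $\widW_{w_k}=F_n+(-c^{-3})^{k-1}(\widW_{w_1}-F_n)$. On a neighbourhood of $\ol{\B_n}$ one has $|c|>\sqrt n$, so the perturbation term contracts geometrically in $k$. Thus $\widW_{w_k}$, and after the normalisation $Z_{w_k}=c^{m_k+1}\widW_{w_k}$, the relevant rescaled sign functions, form a uniformly small perturbation of the limiting configuration defined by $F_n$.

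For (ii), I would rerun Proposition~\ref{prop:branch-selection} and Lemmas~\ref{lem:corner-systems} and~\ref{lem:local-sign-region} with an error parameter. The corner systems have transverse solutions, which is a nondegenerate Jacobian condition. Together with the non-intersection certificates, these are open conditions, so they persist under the uniform perturbation, provided the certificates are verified on the whole family. Concretely, I would verify them for the pencil
\[
F_n+\theta(\widW_{w_1}-F_n),\qquad |\theta|\le |c|^{-3(k-1)},
\]
which covers every odd $k$ at once. Proposition~\ref{prop:four-chart} then gives the Jordan curve $\Gamma_{n,k}\subset\M_n$.

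For (iii), I would argue in three steps:
\begin{itemize}
\item Lemma~\ref{lem:uniform-isolation} gives $c_{n,k}\in\B_n$.
\item The sign-region description of $\Omega_{n,k}$ comes from $\Delta_i(c_{n,k})$ and $Z_{w_k}(c_{n,k})=0$. Hence the position of $c_{n,k}$ relative to the four branches is decided by the same sign conditions as in Lemma~\ref{lem:witness-sign}, which places it in $\Omega_{n,k}$.
\item For exteriority, $g_{t_1\cdots t_j}(2c)$ depends only on the prefix. I would show that the level-$k$ inverse tree first follows the forced prefix $P$ repeatedly. After each block of $3$ extra letters the residual point agrees, up to the stationary sign structure, with the level-$(k-2)$ situation. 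The tree should then die by the base extinction $T_{17}=\varnothing$ of Lemma~\ref{lem:extinction}, shifted by $3(k-1)$ levels.
\end{itemize}
Proposition~\ref{prop:inverse-tree} and Proposition~\ref{prop:ghost-vacancy} then finish (iii).

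For (iv), the plan is to show that $\Omega_{n,j}$ is contained in a region whose diameter is of order $|c|^{-3(j-1)}$ around $c_{n,j}$, while $|c_{n,\ell}-c_{n,j}|$ is comparable to the same scale with a definite constant. Both $c_{n,\ell}$ and $c_{n,j}$ converge to the zero of $Q_n$, and their displacements are governed by the contraction factor $(-c^{-3})^{k-1}$. Comparing these displacements via the closed form should show that $c_{n,\ell}$ violates one of the sign inequalities defining $\Omega_{n,j}$; this is the no-return certificate.

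I expect the main obstacle to be making (ii) and the extinction in (iii) genuinely uniform in $k$, since it requires finitely many checks that cover infinitely many levels. I would try first to express all level-$k$ quantities in terms of a single renormalised variable in which the dependence on $k$ enters only through the bounded parameter $\theta$, and then certify the finite checks over the compact $\theta$-range.
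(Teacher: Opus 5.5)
Your part (i), the witness-inclusion half of (iii), and the sign-test mechanism you name at the end of (iv) agree with the paper. The genuine gap is in (ii): the perturbation is taken in the wrong variable, so the plan fails as set up. The level-$k$ chart conditions compare $Z_{w_k}=c^{m_k+1}\widW_{w_k}$, not $\widW_{w_k}$, with $k$-independent quantities, namely $\Delta_i$ and the trap half-widths. Multiply Corollary~\ref{cor:closed-form} by $c^{m_k+1}$ and use $\widW_{w_1}-F_n=\Phi_n/(c^{18}(c^3+1))$. For odd $k$ this gives the exact identity
\[
Z_{w_k}(c)=\frac{2c^{m_k-9}Q_n(c)+c\,\Phi_n(c)}{c^3+1}.
\]
So your ``uniformly small'' term $c^{m_k+1}\theta(\widW_{w_1}-F_n)$ is the fixed function $c\Phi_n/(c^3+1)$. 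It does not tend to zero; it is exactly what displaces $c_{n,k}$ from $\xi_n$. The $k$-dependence enters only through the blow-up factor $c^{m_k-9}$ in front of $Q_n$.

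Consequently $\Gamma_{n,k}$ has diameter of order $1/|L_{n,k}|$, where $L_{n,k}=Z'_{w_k}(c_{n,k})$ grows like $|c_{n,k}|^{m_k}$, and it collapses onto $\xi_n$. In the $c$-plane there is no nondegenerate limit configuration whose open transversality and non-intersection conditions could persist. A certificate on the pencil $F_n+\theta(\widW_{w_1}-F_n)$, with $\theta$ free in $|\theta|\le|c|^{-3(k-1)}$, does not locate $Z_{w_k}+\Delta_i$ relative to the trap.

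The missing idea is the blow-up at the witness. The paper works in the affine root coordinate $\zeta=L_{n,k}(c-c_{n,k})$, in which $Z_{w_k}\approx\zeta$ and $\Delta_i\approx\Delta_i(c_{n,k})$. There it certifies, for every odd $k$ modulo $W_{w_k}(c_{n,k})=0$, the corner rectangles $\widehat R_{ij}(n,k)$, the Jacobians, the auxiliary chart signs and the graph structure (Proposition~\ref{prop:branch-selection}, Lemmas~\ref{lem:corner-systems} and~\ref{lem:local-sign-region}). Even after such a rescaling, an openness argument only covers large $k$; the small odd levels still need explicit checks.

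The extinction half of (iii) has a related gap. The level-$k$ tree lives at $c_{n,k}\neq c_{n,k-2}$. For words $u$ of length at least $10$ beginning with $\rho$, Lemma~\ref{lem:exact-recursion} only gives $g_{\Rop(u)}(2c)=-g_u(2c)+2c^{|u|-9}Q_n(c)$, and $Q_n(c_{n,k})\neq0$. So the level-$k$ tree is not a literal shift of $T_{17}(c_{n,1})=\varnothing$. You need strict margins between every interval $I_u$ and the digits of $A_N$ at each $c_{n,k}$. Lemmas~\ref{lem:early-prefix} and~\ref{lem:transported-pruning} supply this through the digit maps $\Theta_{k,u}$, and Lemma~\ref{lem:extinction} is already stated for all odd $k$. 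Note also that the forced prefix is $\rho[k]=\Rstar^{k-1}(\rho)$: after $\rho$ it reads $b,-a,-a$ and then alternating blocks $(2,a,a)$ and $(-2,-a,-a)$.

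In (iv), the diameter comparison cannot settle the question by itself. The diameter of $\Omega_{n,j}$ and $|c_{n,\ell}-c_{n,j}|$ both decay like $|\xi_n|^{-3j}$, with $n$-dependent constants you do not control. What works is the one-sided sign test you mention last. Insert the exact later-root value \eqref{eq:later-root} of $Z_{w_j}(c_{n,\ell})$ and certify $(-1)^{j-1}F_{3,j}(c_{n,\ell})<0$ for even $\ell-j$, as in Proposition~\ref{prop:no-return}.
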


\begin{proof}
Part (i) is Corollary~\ref{cor:transported-ghosts}.

For (ii), the base case is Proposition~\ref{prop:base-loop}. For odd $k>1$, Lemma~\ref{lem:stationary-diffs} rewrites the four level-$k$ chart equations in the stationary form recorded in Appendix~\ref{app:stationary-formulas}. The local branch selection is certified in Proposition~\ref{prop:branch-selection}; the consecutive corner systems, their transversality, and the exclusion of nonconsecutive intersections are provided by Lemma~\ref{lem:corner-systems}; and Lemma~\ref{lem:local-sign-region} identifies the resulting sign region with the bounded complementary component. Thus the four arcs form a Jordan curve $\Gamma_{n,k}$, and Lemma~\ref{lem:chart-inclusion} gives $\Gamma_{n,k}\subset\M_n$.

For (iii), Lemma~\ref{lem:uniform-isolation} gives the unique witness parameter $c_{n,k}$. By Lemma~\ref{lem:local-sign-region}, the bounded component $\Omega_{n,k}$ of $\C\setminus\Gamma_{n,k}$ is exactly the corresponding sign region. The exact sign pattern at $c_{n,k}$ is provided by Lemma~\ref{lem:witness-sign}, so $c_{n,k}\in\Omega_{n,k}$. By Lemma~\ref{lem:extinction}, one has $T_{m_k-1}(c_{n,k})=\varnothing$, so Proposition~\ref{prop:inverse-tree} gives $c_{n,k}\notin\M_n$. Thus $(\Gamma_{n,k},c_{n,k})$ is a loop witness.

Part (iv) is the odd-level no-return conclusion of Lemma~\ref{lem:witness-sign}, whose proof uses Proposition~\ref{prop:no-return} with $d=\ell-j$ even.
\end{proof}

\section{Proof of the main theorem and renormalization consequences}\label{sec:proof-main}

\begin{proof}[Proof of Theorem~\ref{thm:main}]
For $n\ge3$ and odd $k\ge1$, let $H_{n,k}$ be the connected component of $(\C\setminus\clD)\setminus\M_n$ containing $c_{n,k}$. By Theorem~\ref{thm:stationary} and Lemma~\ref{lem:loop-hole}, each $H_{n,k}$ is a hole and $H_{n,k}\subset\Omega_{n,k}$. If $\ell>j$ are odd, then Theorem~\ref{thm:stationary}(iv) gives $c_{n,\ell}\notin\Omega_{n,j}$, hence $c_{n,\ell}\notin H_{n,j}$. Since $c_{n,\ell}\in H_{n,\ell}$, the holes $H_{n,\ell}$ and $H_{n,j}$ are distinct. Thus $\M_n$ has infinitely many holes for every $n\ge3$.

For $n=2$, the introduction identifies $\M_2$, after the inversion $\lambda=1/c$, with the connectedness locus studied by Calegari--Koch--Walker. They prove that this locus has infinitely many holes~\cite{CalegariKochWalker2017}. Hence $\M_2$ also has infinitely many holes.
\end{proof}

\begin{corollary}[Canonical algebraic renormalization point]\label{cor:renorm-point}
For every $n\ge3$ there exists a unique simple zero $\xi_n\in\B_n$ such that
\[
        Q_n(\xi_n)=0.
\]
The witness sequence satisfies
\[
        c_{n,k}\longrightarrow \xi_n\qquad(k\to\infty).
\]
In particular,
\[
        \xi_n\in\partial\M_n\subset\M_n,
\]
every neighborhood of $\xi_n$ meets infinitely many pairwise distinct holes of $\M_n$, and $\xi_n$ is algebraic of degree at most $13$. We call $\xi_n$ the renormalization point of the stationary family.
\end{corollary}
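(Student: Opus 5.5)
The plan has four steps: locate $\xi_n$ by the root count already used for Lemma~\ref{lem:uniform-isolation}, get $c_{n,k}\to\xi_n$ from the closed form in Corollary~\ref{cor:closed-form}, deduce boundary membership from closedness, and read off the degree from $Q_n$.

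\textbf{Existence, uniqueness and simplicity of $\xi_n$.} The proof of Lemma~\ref{lem:uniform-isolation} already invokes Lemma~\ref{lem:boundary-winding}: $Q_n$ has exactly one zero in $\B_n$, counted with multiplicity, and none on $\partial\B_n$. Call this zero $\xi_n$. It is simple because its multiplicity is one, and it is unique in $\B_n$. Since $Q_n\in\Z[c]$ has degree $13$, the minimal polynomial of $\xi_n$ divides $Q_n$, so $\xi_n$ is algebraic of degree at most $13$.

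\textbf{Convergence $c_{n,k}\to\xi_n$.} I would use Corollary~\ref{cor:closed-form} on a small disc. Fix $r>0$ with $\overline{B(\xi_n,r)}\subset\B_n$. On this compact disc we have $|c|\ge\sqrt n>1$, and $\widW_{w_1}-F_n$ is continuous, hence bounded. Therefore
\[
\sup_{|c-\xi_n|\le r}\bigl|\widW_{w_k}(c)-F_n(c)\bigr|\le n^{-3(k-1)/2}M\longrightarrow0 .
\]
Since $c^3+1\neq0$ on $\B_n$, the function $F_n$ has a simple zero at $\xi_n$ and no other zero in the closed disc. Hence $\min_{|c-\xi_n|=r}|F_n|>0$. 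For large $k$, Rouché's theorem gives $\widW_{w_k}$ exactly one zero in $B(\xi_n,r)$. By the uniqueness in Lemma~\ref{lem:uniform-isolation}(ii), that zero is $c_{n,k}$. As $r$ is arbitrary, $c_{n,k}\to\xi_n$.

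\textbf{Boundary membership and accumulating holes.} By Theorem~\ref{thm:stationary}(iii), $c_{n,k}\notin\M_n$ for odd $k$. Let $H_{n,k}\subset\Omega_{n,k}$ be the hole containing $c_{n,k}$; by the proof of Theorem~\ref{thm:main} these holes are pairwise distinct for odd $k$. Every neighbourhood of $\xi_n$ contains $c_{n,k}$ for all large odd $k$, so it meets infinitely many distinct holes. It also follows that $\xi_n\notin\Int(\M_n)$.

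To get $\xi_n\in\M_n$, I would use that $\M_n$ is closed in $\C\setminus\clD$ (Proposition~\ref{prop:restricted-root}) and find points of $\M_n$ converging to $\xi_n$. The natural choice is the Jordan curves $\Gamma_{n,k}\subset\M_n$, which surround $c_{n,k}$. It suffices to show $\operatorname{diam}\Gamma_{n,k}\to0$, or merely that $\operatorname{dist}(c_{n,k},\Gamma_{n,k})\to0$.

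For this I would use Lemma~\ref{lem:stationary-diffs}. The chart conditions $Z_{d_k^{(i)}}(c)\in\Ctrap(c)$ read $Z_{w_k}(c)\pm\Delta_i(c)\in\Ctrap(c)$. Here $Z_{w_k}=c^{m_k+1}\widW_{w_k}$ vanishes at $c_{n,k}$, and its derivative there has size comparable to $|c|^{m_k}|F_n'(\xi_n)|$, which grows exponentially. The $\Delta_i$ and the trap are independent of $k$. Hence the region cut out by the four conditions near $c_{n,k}$ should shrink at rate roughly $|\xi_n|^{-m_k}$. This would put points of $\Gamma_{n,k}$ within $o(1)$ of $c_{n,k}$.

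\textbf{Main obstacle and fallback.} The main obstacle is making this shrinking quantitative, since the sign regions of Lemma~\ref{lem:local-sign-region} are only certified qualitatively. If this proves awkward, I would prove $\xi_n\in\M_n$ arithmetically instead. The word $\Rop$-iteration stabilises digit by digit toward a periodic infinite word whose series $2-\sum_j (w_\infty)_j\xi_n^{-j}$ is a multiple of $F_n(\xi_n)$, and $F_n(\xi_n)=0$. One would then realise this infinite word with admissible digits, namely a corner word's tail, which differs from the center only in a bounded block. The resulting expansion of $2\xi_n$ places $2\xi_n\in E(\xi_n,N)$, and Proposition~\ref{prop:marked-point} gives $\xi_n\in\M_n$.
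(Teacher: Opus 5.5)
\textbf{Gap: the step $\xi_n\in\M_n$.}

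Most of your proposal is correct. This covers existence, uniqueness, simplicity and degree of $\xi_n$, the convergence $c_{n,k}\to\xi_n$, and the accumulation of the pairwise distinct holes $H_{n,k}$. Your Rouch\'e argument on small discs is a valid alternative to the paper's convergence proof. The paper instead uses uniform convergence $\widW_{w_k}\to F_n$ on all of $\B_n$, and shows every subsequential limit of $(c_{n,k})$ in $\ol{\B_n}$ is a zero of $Q_n$, hence equals $\xi_n$.

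The gap is the step $\xi_n\in\M_n$, which is what upgrades $\xi_n\notin\Int(\M_n)$ to $\xi_n\in\partial\M_n$. Your main route needs $\operatorname{dist}(c_{n,k},\Gamma_{n,k})\to0$, and you only argue that the loops \emph{should} shrink. Your fallback leaves its decisive step unverified, because you never check that the limit word is admissible. The proposed repair by a corner word's tail that ``differs from the center only in a bounded block'' would change the value of the series at $\xi_n$ if that block sat at fixed positions. As written, neither route proves $\xi_n\in\M_n$.

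\textbf{The missing idea is topological, and you already have its ingredients.} Suppose $\xi_n\notin\M_n$. Since $\M_n$ is closed in $\C\setminus\clD$, the set $(\C\setminus\clD)\setminus\M_n$ is open. Its component $C$ containing $\xi_n$ is therefore open and contains a neighbourhood $V$ of $\xi_n$. For all large odd $k$ one has $c_{n,k}\in V\subset C$, hence $H_{n,k}=C$. This contradicts the pairwise distinctness of the odd-level holes. This is exactly the paper's argument.

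\textbf{Your own routes can also be completed.}
\begin{enumerate}
\item The iterates $w_k$ converge digitwise to $w_\infty=(P,2,a,a,-2,-a,-a,2,a,a,-2,-a,-a,\ldots)$. This sequence already has all digits in $A_N$, since the odd digit at index $j_k=m_k-8$ escapes to infinity. Since $\Rop(w_\infty)=w_\infty$, passing to the limit over truncations in \eqref{eq:recursion-normalized} gives $2-\sum_{j\ge1}(w_\infty)_jc^{-j}=F_n(c)$ for $|c|>1$. So $F_n(\xi_n)=0$ exhibits $2\xi_n\in E(\xi_n,N)$ directly, and Proposition~\ref{prop:marked-point} gives $\xi_n\in\M_n$.
\item One has $\widW_{d_k^{(1)}}-\widW_{w_k}=(-1)^{k-1}c^{-m_k-1}\Delta_1(c)\to0$ uniformly on $\B_n$. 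Your own Rouch\'e argument then yields zeros $e_k\to\xi_n$ of $W_{d_k^{(1)}}$, with $d_k^{(1)}$ admissible. Proposition~\ref{prop:center-root} puts $e_k\in\U_{d_k^{(1)}}\subset\M_n$, and closedness of $\M_n$ finishes.
\item Your shrinking heuristic is also provable: $\Gamma_{n,k}\subset\mathcal N_{n,k}=c_{n,k}+L_{n,k}^{-1}\widehat{\mathcal N}_{n,k}$. Here $\widehat{\mathcal N}_{n,k}$ is bounded uniformly in $k$. Moreover $|L_{n,k}|=|c_{n,k}|^{m_k+1}|\widW_{w_k}'(c_{n,k})|\to\infty$, because $F_n'(\xi_n)\neq0$.
\end{enumerate}
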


\begin{proof}
Fix $n\ge3$ and set $F_n(c)$ as in \eqref{eq:Fn-def}. By Lemma~\ref{lem:boundary-winding}, the polynomial $Q_n$ has exactly one zero in $\B_n$, counted with multiplicity, and no zero on $\partial\B_n$. Denote that zero by $\xi_n$. Because the count is $1$, the zero is simple.

By Corollary~\ref{cor:closed-form},
\[
        \widW_{w_k}(c)-F_n(c)=(-c^{-3})^{k-1}\bigl(\widW_{w_1}(c)-F_n(c)\bigr).
\]
Hence, for every $c\in\B_n$,
\[
        |\widW_{w_k}(c)-F_n(c)|\le |c|^{-3(k-1)}\sup_{z\in\B_n}|\widW_{w_1}(z)-F_n(z)|.
\]
Since $|c|>\sqrt n>1$ on $\B_n$, it follows that $\widW_{w_k}\to F_n$ uniformly on $\B_n$.

Now let $c_*$ be any subsequential limit of $(c_{n,k})_{k\ge1}$. By Lemma~\ref{lem:uniform-isolation}, all terms lie in the compact set $\ol{\B_n}$, so such a limit exists. Choose a subsequence $c_{n,k_j}\to c_*$. Since $\widW_{w_{k_j}}(c_{n,k_j})=0$, the uniform convergence gives
\[
        |F_n(c_{n,k_j})|=|F_n(c_{n,k_j})-\widW_{w_{k_j}}(c_{n,k_j})|\le\sup_{z\in\B_n}|\widW_{w_{k_j}}(z)-F_n(z)|\longrightarrow0.
\]
Therefore $F_n(c_*)=0$. Because the denominator of $F_n$ does not vanish on $\B_n$, the functions $F_n$ and $Q_n$ have the same zeros there, so $Q_n(c_*)=0$. By uniqueness of the zero in $\B_n$ and the absence of boundary zeros, one must have $c_*=
\xi_n$. Thus every convergent subsequence has the same limit $\xi_n$, and therefore the whole sequence $(c_{n,k})_{k\ge1}$ converges to $\xi_n$.

For each odd $k\ge1$, let $H_{n,k}$ be the hole containing $c_{n,k}$. By the proof of Theorem~\ref{thm:main}, the holes $H_{n,k}$ with $k$ odd are pairwise distinct. Every neighborhood of $\xi_n$ contains all but finitely many odd-level points $c_{n,k}$, hence meets infinitely many of these holes.

We claim that $\xi_n\in\M_n$. Otherwise $\xi_n$ would lie in a connected component $C$ of $(\C\setminus\clD)\setminus\M_n$. Because $(\C\setminus\clD)\setminus\M_n$ is open in $\C\setminus\clD$, some neighborhood $V$ of $\xi_n$ in $\C\setminus\clD$ satisfies $V\subset C$. For all sufficiently large odd $k$, one has $c_{n,k}\in V\subset C$. Since $H_{n,k}$ is the connected component of $(\C\setminus\clD)\setminus\M_n$ containing $c_{n,k}$, it follows that $H_{n,k}=C$ for all large odd $k$, contradicting the pairwise distinctness of the odd-level holes $H_{n,k}$. Thus $\xi_n\in\M_n$. Because every neighborhood of $\xi_n$ meets $(\C\setminus\clD)\setminus\M_n$, we obtain $\xi_n\in\partial\M_n$. Since $\xi_n$ is a zero of the explicit degree-$13$ polynomial $Q_n$, it is algebraic of degree at most $13$.
\end{proof}

\begin{remark}[Quantitative convergence]\label{rem:quant-conv}
The same argument yields a geometric rate. Indeed, Corollary~\ref{cor:closed-form} gives
\[
        \sup_{c\in\B_n}|\widW_{w_k}(c)-F_n(c)|
        \le n^{-3(k-1)/2}\sup_{c\in\B_n}|\widW_{w_1}(c)-F_n(c)|.
\]
Because $\xi_n$ is a simple zero of $F_n$, one can choose a closed disk $K_n\Subset\B_n$ centered at $\xi_n$ and a constant $m_n>0$ such that
\[
        F_n(c)=(c-\xi_n)G_n(c),\qquad |G_n(c)|\ge m_n\qquad(c\in K_n).
\]
Since $c_{n,k}\to\xi_n$, all sufficiently large witness points lie in $K_n$, and for those indices
\[
        |c_{n,k}-\xi_n|\le\frac1{m_n}|F_n(c_{n,k})|
        \le\frac1{m_n}\sup_{c\in\B_n}|\widW_{w_k}(c)-F_n(c)|.
\]
Thus $|c_{n,k}-\xi_n|=O(n^{-3k/2})$ as $k\to\infty$ for each fixed $n$.
\end{remark}

\begin{remark}[Comparison with the $n=2$ renormalization picture]
For $n=2$, Calegari--Koch--Walker identify a renormalization point for $\M_2$ in their reciprocal parameterization and prove that infinitely many holes accumulate there~\cite{CalegariKochWalker2017}. For $n\ge3$, Corollary~\ref{cor:renorm-point} gives the analogous conclusion for the stationary family constructed here, but now with a canonical algebraic limit point: the whole witness sequence converges to the unique zero $\xi_n$ of $Q_n$ in $\B_n$. The mechanisms are different. In~\cite{CalegariKochWalker2017} the renormalization picture is tied to the two-map geometry of $\M_2$; here it comes from stationary transport of a four-chart ghost vacancy, uniform in $n$.
\end{remark}

\begin{figure}[htbp]
\centering
\includegraphics[width=0.98\textwidth]{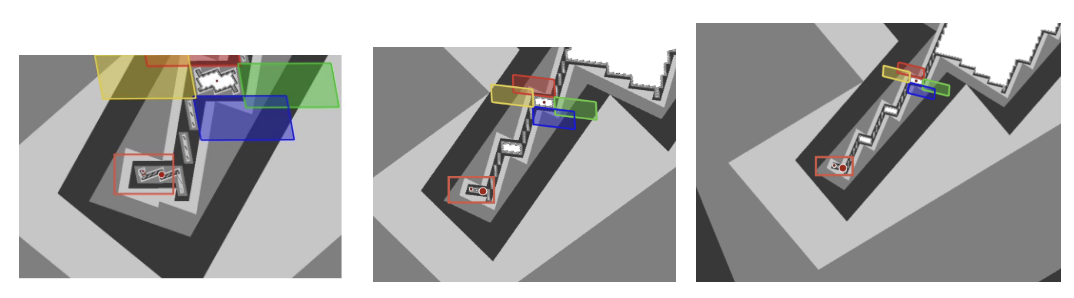}
\caption{Finite-depth inverse-tree renderings of the first stationary hole for $n=3,4,5$. The grayscale parameter-space layer is generated by the finite-capture inverse-iteration procedure described in Appendix~\ref{app:certified-rendering}; the colored overlays mark the chart levels used in the four-chart construction.}
\label{fig:n345-comparison}
\end{figure}

Figure~\ref{fig:n345-comparison} records the same local construction for the first three values $n=3,4,5$. The side-by-side rendering shows how the first visible hole changes with the alphabet size, while the chart-level colors are kept fixed across the three panels. The theorem above proves one stationary family for each $n\ge3$; a systematic classification of further local configurations is left open.

\appendix

\section{Imported inputs from the companion paper}\label{app:imported-inputs}

This appendix records the finite-capture inputs imported from~\cite{EspiguleJuher2026} in the exact form used in the present paper. The restricted-root closure argument is included in Proposition~\ref{prop:restricted-root}; the marked-point criterion and the canonical trap--enclosure framework are restated here with enough detail to make their use transparent.

\subsection{Restricted roots and the marked point}\label{app:restricted-roots}
For $|c|>1$, the difference attractor of the alphabet $A_N$ has the radix description
\[
        E(c,N)=\left\{\sum_{j=0}^{\infty}e_jc^{-j}:e_j\in A_N\right\}.
\]
Hence
\[
        2c\in E(c,N)
        \Longleftrightarrow
        2c=\sum_{j=0}^{\infty}e_jc^{-j}\text{ for some }e_j\in A_N
        \Longleftrightarrow
        1+\sum_{k=1}^{\infty}d_kc^{-k}=0
\]
with $d_k=-e_{k-1}/2\in D_n$. This is the reciprocal-series form appearing in Proposition~\ref{prop:restricted-root}. Every finite admissible word gives an eventually zero digit sequence and therefore a point of $\Rs_n$. Conversely, if the reciprocal series vanishes at $c$, then its truncations define admissible monic polynomials whose roots converge back to $c$ by the Rouch\'e argument recorded in Proposition~\ref{prop:restricted-root}. This is the closure mechanism behind the restricted-root formulation.

\subsection{\texorpdfstring{Why connectedness is detected by $2c$}{Why connectedness is detected by 2c}}\label{app:marked-point}
Let $E_0:=E(c,n)$. The first-level pieces are the translates
\[
        f_t(E_0)=t+\frac1cE_0\qquad(t\in A_n).
\]
For $s,t\in A_n$ one has
\[
        f_t(E_0)\cap f_s(E_0)\ne\varnothing
        \Longleftrightarrow
        c(t-s)\in E_0-E_0.
\]
Since $A_n$ is an arithmetic progression with common gap $2$, the corresponding overlap criterion can be expressed entirely in terms of the difference attractor of the same collinear family. In the present normalization, the criterion of~\cite[Proposition 2.1]{EspiguleJuher2026} becomes
\[
        c\in\M_n\quad\Longleftrightarrow\quad 2c\in E_0-E_0.
\]
Because $E_0-E_0$ is precisely the difference attractor $E(c,N)$, this yields Proposition~\ref{prop:marked-point}.

\subsection{Canonical trap and enclosure}\label{app:trap}
For $c=x+\ii y\in\X_n\setminus\Rr$, the companion paper constructs two canonical parallelograms adapted to the linear dynamics of multiplication by $c$: an open trap $\Ctrap(c)$ and a closed enclosure $\Enc(c,N)$. In the normalization used here, the relevant trap half-widths are
\[
        S(c,N)=\frac{Ny}{|c|},\qquad
        V(c,N)=\frac{(N-2x)y}{|c|^2},
\]
exactly as recorded later in Appendix~\ref{app:stationary-formulas}. The enclosure is forward invariant under the Hutchinson operator, so it contains the whole difference attractor. The trap is strictly covered by the union of its first-generation images and is therefore contained in the interior of the attractor. Because the digit set $A_N$ is an arithmetic progression on the real axis, both verifications reduce to one-dimensional interval-covering checks in the two canonical coordinates. This is the content of~\cite[Definition 3.11, Corollary 3.13, Proposition 3.15, and Theorem 3.16]{EspiguleJuher2026}, restated here as Proposition~\ref{prop:trap-enclosure}.

\section{Explicit stationary formulas}\label{app:stationary-formulas}

For $c=x+\ii y$ with $x>0$ and $y>0$, the canonical trap from Proposition~\ref{prop:trap-enclosure} has half-widths
\[
        S(c,N)=\frac{Ny}{|c|},\qquad V(c,N)=\frac{(N-2x)y}{|c|^2}.
\]
These are trap widths; later in Appendix~\ref{app:exact-verification} we also use the enclosure widths $S_E(c,N)$ and $V_E(c,N)$. Hence the chart condition $Z_u(c)\in\Ctrap(c)$ becomes
\[
        c\in\U_u
        \Longleftrightarrow
        |\Impart Z_u(c)|<\frac{(N-2x)y}{x^2+y^2}
        \quad\text{and}\quad
        |\Impart((x+\ii y)Z_u(x+\ii y))|<Ny
\]
on the upper half-plane. For later use, write these two inequalities as
\begin{align}
V_u^-(x,y)&:=(x^2+y^2)\Impart Z_u(x+\ii y)+(N-2x)y,\label{eq:Vu-minus}\\
V_u^+(x,y)&:=(x^2+y^2)\Impart Z_u(x+\ii y)-(N-2x)y,\label{eq:Vu-plus}\\
S_u^-(x,y)&:=\Impart\bigl((x+\ii y)Z_u(x+\ii y)\bigr)+Ny,\label{eq:Su-minus}\\
S_u^+(x,y)&:=\Impart\bigl((x+\ii y)Z_u(x+\ii y)\bigr)-Ny.\label{eq:Su-plus}
\end{align}
Then, for $y>0$,
\begin{equation}\label{eq:chart-signs}
        c=x+\ii y\in\U_u
        \Longleftrightarrow
        V_u^-(x,y)>0,
        \quad V_u^+(x,y)<0,
        \quad S_u^-(x,y)>0,
        \quad S_u^+(x,y)<0.
\end{equation}
Consequently every boundary component of $\U_u$ on the upper half-plane is contained in one of the four real-algebraic curves
\[
        V_u^-=0,\quad V_u^+=0,
        \quad S_u^-=0,
        \quad S_u^+=0.
\]

For the stationary family, Lemma~\ref{lem:stationary-diffs} gives the exact identities
\begin{equation}\label{eq:stationary-Z}
        Z_{d_k^{(i)}}(c)=Z_{w_k}(c)+(-1)^{k-1}\Delta_i(c)
        \qquad(i=1,2,3,4).
\end{equation}
Thus the only $k$-dependence in the level-$k$ chart equations comes from the single holomorphic function $Z_{w_k}$; the four perturbation terms are the fixed low-degree polynomials $\Delta_1,\ldots,\Delta_4$. In particular, after inserting \eqref{eq:stationary-Z} into \eqref{eq:Vu-minus}--\eqref{eq:Su-plus}, every geometric statement about the stationary family reduces to explicit real polynomials and rational functions in $(x,y)$ together with the algebraic equation defining $c_{n,k}$.

For the four loop branches we use the shorthand
\begin{equation}\label{eq:branch-functions-shorthand}
        F_{1,k}:=V_{d_k^{(1)}}^-,\qquad
        F_{2,k}:=S_{d_k^{(2)}}^-,\qquad
        F_{3,k}:=V_{d_k^{(3)}}^+,
        \qquad F_{4,k}:=S_{d_k^{(4)}}^+.
\end{equation}
The four consecutive corner systems are therefore
\begin{equation}\label{eq:corner-systems}
        F_{1,k}=F_{2,k}=0,
        \quad F_{2,k}=F_{3,k}=0,
        \quad F_{3,k}=F_{4,k}=0,
        \quad F_{4,k}=F_{1,k}=0.
\end{equation}

\section{Finite algebraic verification}\label{app:exact-verification}

The remaining finite subarguments reduce to sign, root-counting, and non-intersection statements for explicit real polynomials and rational functions on compact intervals, rectangles, and algebraic zero-dimensional sets. This appendix records these reductions in a form that can be checked by finite algebraic certificates. Each entry specifies the polynomial family, the domain, the verification method, and the conclusion needed in the main proof. The methods used are Sturm's theorem, subresultants, denominator clearing, Bernstein coefficient positivity, elementary interval estimates, and the displayed stationary identities. The parameter-space figures are finite-depth renderings; their rendering protocol is recalled in Appendix~\ref{app:certified-rendering}.

\subsection{Verification protocol}\label{subsec:verification-protocol}
All verifications used in the proof are restricted to the odd-depth subfamily $k=1,3,5,\ldots$. They use rational arithmetic after replacing $\sqrt n$ by a parameter satisfying the relevant quadratic relation when edge intervals are parameterized. The infinite ranges of $n$ and $k$ are reduced as follows. The witness-box boundary computation depends only on $n$ and is split into the five regimes listed in Table~\ref{tab:itinerary}. The local loop and pruning verifications use the algebraic relation $W_{w_k}(c_{n,k})=0$ together with the stationary identities in Corollary~\ref{cor:closed-form} and Lemma~\ref{lem:stationary-diffs}, so that the parameter $k$ enters only through explicit parity and monomial factors. The no-return verification is made for odd $k$ and positive even $d$ on the algebraic set cut out by the later-witness equation $W_{w_{k+d}}(c)=0$ in $\ol{\B_n}$, not on the whole witness box. Each item below is therefore a finite statement about signs of explicitly displayed polynomials on explicitly displayed intervals, rectangles, or zero-dimensional algebraic sets.

\begin{table}[t]
\centering
\small
\caption{Index of the finite algebraic verifications. Each row gives the verification target, the verification object, and the acceptance rule used in the proof.}
\label{tab:verification-index}
\begin{tabular}{@{}p{0.10\linewidth}p{0.28\linewidth}p{0.30\linewidth}p{0.22\linewidth}@{}}
\toprule
Entry & Proof target & Verification data & Acceptance rule\\
\midrule
H1 & Unique witness-box root and Rouch\'e boundary margin & Edge polynomials $R_{q,n},I_{q,n}$ and cleared margins $H_{q,n}$ on the four edges of $\B_n$ & Sturm variation counts and strict positive margin bounds\\
H2 & Four-chart Jordan loop & Auxiliary chart signs, consecutive corner systems, Jacobians, graph data, and nonconsecutive resultants on $\mathcal N_{n,k}$ & Face signs, nonzero Jacobians, graph-arc criterion, and subresultant exclusion\\
H3 & Witness inclusion and no-return & Cleared signs $\Phi_{j,k,d}$ restricted to the algebraic witness locus $W_{w_{k+d}}=0$ for odd $k$ and even $d>0$ & Root isolation and signed remainders modulo the witness equation\\
H4 & Inverse-tree extinction & Early-prefix pruning, admissible digit intervals, terminal rows, and transported digit equivalences & Interval inclusions between consecutive digits of $A_N$\\
\bottomrule
\end{tabular}
\end{table}

\subsection{Root isolation in the common witness box}\label{subsec:H1}
Verification H1. The inputs are the edge polynomials $R_{q,n}$ and $I_{q,n}$ and the cleared margin polynomials $H_{q,n}$ used in the Rouch\'e step; the outputs are Proposition~\ref{prop:finite-boundary-data} and Lemmas~\ref{lem:boundary-winding} and~\ref{lem:rouche-bound}, hence the unique witness-box root statement recorded in Lemma~\ref{lem:uniform-isolation}.

Write
\[
        I_x:=\left[\frac14,\frac12\right],
        \qquad
        I_y:=\left[\sqrt n,\sqrt n+\frac18\right].
\]
Traversing $\partial\B_n$ positively, parameterize its four edges by
\begin{align*}
        \sigma_b(t)&:=t+\ii\sqrt n &&(t\in I_x),\\
        \sigma_r(t)&:=\frac12+\ii t &&(t\in I_y),\\
        \sigma_t(t)&:=\frac34-t+\ii\left(\sqrt n+\frac18\right) &&(t\in I_x),\\
        \sigma_l(t)&:=\frac14+\ii\left(2\sqrt n+\frac18-t\right) &&(t\in I_y).
\end{align*}
For $q\in\{b,r,t,l\}$ write
\[
        Q_{q,n}(t):=Q_n(\sigma_q(t))=R_{q,n}(t)+\ii I_{q,n}(t),
\]
where $R_{q,n}$ and $I_{q,n}$ are the corresponding one-variable real polynomials obtained by substitution. For the Rouch\'e step we also set the auxiliary numerator polynomial $\Phi_n$ (distinct from the cleared no-return polynomials $\Phi_{j,k,d}$ appearing in Section~\ref{subsec:H3}):
\begin{align}
\Phi_n(c)&:=(c^3+1)W_{w_1}(c)-2c^8Q_n(c)\notag\\
&=-c^{11}-(n-1)c^9-(n-1)c^8+(n-1)c^7+(n-1)c^6+(n-3)c^5\notag\\
&\qquad -(n-1)c^4+(2n-3)c^2.\label{eq:Phi-n-poly}
\end{align}
Thus
\[
        \widW_{w_1}(c)-F_n(c)=\frac{\Phi_n(c)}{c^{18}(c^3+1)}.
\]

For the boundary argument we separate the five arithmetic regimes
\begin{align*}
        \mathcal N_1&:=\{3,4\},&
        \mathcal N_2&:=\{5\},&
        \mathcal N_3&:=\{6,7,8\},\\
        \mathcal N_4&:=\{9,10,\ldots,29\},&
        \mathcal N_5&:=\{n\in\Z:n\ge30\}.
\end{align*}
Along each oriented edge of $\partial\B_n$, the image under $Q_n$ follows a fixed quadrant itinerary inside each of these five ranges. Equivalently, for each fixed $n$ and each edge polynomial $Q_{q,n}=R_{q,n}+\ii I_{q,n}$, the real polynomials $R_{q,n}$ and $I_{q,n}$ have the exact root counts and initial signs encoded by that itinerary. This is the finite one-variable content of the winding computation, and Table~\ref{tab:itinerary} records the corresponding quadrant data.

\begin{table}[t]
\centering
\caption{Quadrant itinerary of the oriented boundary image $Q_n(\partial\B_n)$; equivalently, the exact zero pattern of the real and imaginary edge polynomials $R_{q,n}$ and $I_{q,n}$ on the four oriented edges. Each arrow indicates one simple axis crossing. The top edge is traversed from right to left and the left edge from top to bottom.}
\label{tab:itinerary}
\begin{tabular}{ccccc}
\toprule
range of $n$ & bottom edge & right edge & top edge & left edge\\
\midrule
$\mathcal N_1$ & II & II$\to$III & III$\to$IV$\to$I & I$\to$II\\
$\mathcal N_2$ & II$\to$III$\to$II & II$\to$III & III$\to$IV$\to$I & I$\to$II\\
$\mathcal N_3$ & II$\to$III & III & III$\to$IV$\to$I$\to$II & II\\
$\mathcal N_4$ & II$\to$III & III$\to$IV & IV$\to$I$\to$II & II\\
$\mathcal N_5$ & II$\to$III & III$\to$IV$\to$I & I$\to$II & II\\
\bottomrule
\end{tabular}
\end{table}

For the Rouch\'e step, the cleared margin polynomials
\[
        H_{q,n}(t):=4|\sigma_q(t)|^{16}|Q_{q,n}(t)|^2-|\Phi_n(\sigma_q(t))|^2
        \qquad(q\in\{b,r,t,l\})
\]
are again one-variable real polynomials on the corresponding edge intervals. The concrete lower bounds needed later are summarized in Table~\ref{tab:rouche-margins}.

\begin{table}[t]
\centering
\caption{Explicit positive lower bounds for the cleared Rouch\'e margins $H_{q,n}$ on the four oriented edges of $\partial\B_n$. These are the actual edgewise inequalities used in the Rouch\'e step.}
\label{tab:rouche-margins}
\begin{tabular}{ccccc}
\toprule
range of $n$ & bottom & right & top & left\\
\midrule
$\mathcal N_1$ & $H_{b,n}>10^9$ & $H_{r,n}>10^{10}$ & $H_{t,n}>10^9$ & $H_{l,n}>10^9$\\
$\mathcal N_2$ & $H_{b,n}>10^{13}$ & $H_{r,n}>10^{14}$ & $H_{t,n}>10^{13}$ & $H_{l,n}>10^{13}$\\
$\mathcal N_3$ & $H_{b,n}>10^{15}$ & $H_{r,n}>10^{15}$ & $H_{t,n}>10^{15}$ & $H_{l,n}>10^{15}$\\
$\mathcal N_4$ & $H_{b,n}>10^{18}$ & $H_{r,n}>10^{18}$ & $H_{t,n}>10^{18}$ & $H_{l,n}>10^{19}$\\
$\mathcal N_5$ & $H_{b,n}>10^{28}$ & $H_{r,n}>10^{28}$ & $H_{t,n}>10^{28}$ & $H_{l,n}>10^{29}$\\
\bottomrule
\end{tabular}
\end{table}

If $N_J(P)$ denotes the number of real zeros of a real polynomial $P$ on an interval $J$, then Table~\ref{tab:itinerary} is equivalent to the explicit edgewise counts
\begin{align}
N_{I_x}(R_{b,n})&=0,&
N_{I_x}(I_{b,n})&=\begin{cases}0,&n\in\mathcal N_1,\\2,&n\in\mathcal N_2,\\1,&n\in\mathcal N_3\cup\mathcal N_4\cup\mathcal N_5,
\end{cases}\notag\\
N_{I_y}(R_{r,n})&=\begin{cases}0,&n\in\mathcal N_1\cup\mathcal N_2\cup\mathcal N_3,\\1,&n\in\mathcal N_4\cup\mathcal N_5,
\end{cases}&
N_{I_y}(I_{r,n})&=\begin{cases}1,&n\in\mathcal N_1\cup\mathcal N_2,\\0,&n\in\mathcal N_3\cup\mathcal N_4,\\1,&n\in\mathcal N_5,
\end{cases}\notag\\
N_{I_x}(R_{t,n})&=\begin{cases}1,&n\in\mathcal N_1\cup\mathcal N_2\cup\mathcal N_4\cup\mathcal N_5,\\2,&n\in\mathcal N_3,
\end{cases}&
N_{I_x}(I_{t,n})&=\begin{cases}1,&n\in\mathcal N_1\cup\mathcal N_2\cup\mathcal N_3\cup\mathcal N_4,\\0,&n\in\mathcal N_5,
\end{cases}\notag\\
N_{I_y}(R_{l,n})&=\begin{cases}1,&n\in\mathcal N_1\cup\mathcal N_2,\\0,&n\in\mathcal N_3\cup\mathcal N_4\cup\mathcal N_5,
\end{cases}&
N_{I_y}(I_{l,n})&=0.
\label{eq:edge-counts}
\end{align}
Together with the initial quadrant on each oriented edge, these counts determine the complete boundary itinerary. Likewise Table~\ref{tab:rouche-margins} records the actual Rouch\'e inequalities
\begin{equation}\label{eq:margins}
        H_{b,n}(t)>M_b(n),\qquad H_{r,n}(t)>M_r(n),\qquad H_{t,n}(t)>M_t(n),\qquad H_{l,n}(t)>M_l(n)
\end{equation}
on the corresponding edge intervals, where the regime-wise lower bounds $M_q(n)$ are exactly the entries of Table~\ref{tab:rouche-margins}.

\begin{proposition}[Finite witness-box boundary data]\label{prop:finite-boundary-data}
For every $n\ge3$ the witness-box boundary data satisfy the following exact statements.
\begin{enumerate}[label=(\roman*)]
\item The edge polynomials $R_{q,n}$ and $I_{q,n}$ satisfy the exact root counts in \eqref{eq:edge-counts}. Equivalently, $Q_n(\partial\B_n)$ follows the quadrant itinerary in Table~\ref{tab:itinerary}.
\item The cleared margin polynomials $H_{q,n}$ satisfy the positive lower bounds in \eqref{eq:margins}. Consequently the edgewise Rouch\'e inequalities hold on the four boundary edges.
\end{enumerate}
\end{proposition}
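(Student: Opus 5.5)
The two items involve infinitely many values of $n$, so the plan is to handle the finite regimes $\mathcal N_1,\dots,\mathcal N_4$ by direct exact computation for each $n$, and the tail $\mathcal N_5$ by an asymptotic reparametrization in $s:=\sqrt n$.

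\emph{Finite regimes.} For each $n\le 29$ I substitute the edge parametrizations $\sigma_q$ into $Q_n$ and $\Phi_n$. This yields the real polynomials $R_{q,n}$, $I_{q,n}$ and $H_{q,n}$ with coefficients in $\mathbb Q(\sqrt n)$; they are rational whenever $n$ is a perfect square. On the horizontal edges, $t$ is the rational variable and $\sqrt n$ appears only in the coefficients. On the vertical edges I shift $t=\sqrt n+\tau$ with $\tau\in[0,1/8]$, which moves the irrationality into the coefficients as well. For non-square $n$ I then run Sturm sequences over $\mathbb Q(\sqrt n)$, evaluating the signs of elements $\alpha+\beta\sqrt n$ exactly by squaring. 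This gives the root counts in \eqref{eq:edge-counts}. The initial signs at the corners fix the starting quadrant, and with it the itinerary of Table~\ref{tab:itinerary}. For the margins I plan to write $H_{q,n}-M_q(n)$ in the Bernstein basis on the edge interval and check that every coefficient is positive. Where positivity fails, I will bisect the interval until it holds.

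\emph{Tail $n\ge 30$.} Here I write $c=x+\ii(s+\eta)$ with $s=\sqrt n$, $x\in I_x$ and $\eta\in[0,1/8]$, and expand $Q_n(c)$ as a polynomial in $s$ with coefficients polynomial in $(x,\eta)$. The leading behaviour comes from
\[
c^{13}+(n-1)c^{11}+nc^{10}=c^{10}\bigl(c^3+(n-1)c+n\bigr)+\dots
\]
On the box, $c^2\approx -n+2\ii xs+\dots$, so $c^3+(n-1)c+n$ has size about $s^2$ while the other terms are lower order. This should determine the sign pattern of the real and imaginary parts on each edge up to a controlled remainder.

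I then define $u=1/s\in(0,1/\sqrt{30}]$ and multiply through by the appropriate power of $u$. The edge polynomials become polynomials in $(t,u)$ on a rectangle, and I certify their zero counts by a uniform argument. Either a derivative in $t$ keeps a constant sign, which gives monotonicity and hence at most one crossing, or an endpoint sign check combined with Bernstein positivity on the rectangle does the job. The same rescaling applied to $H_{q,n}$ gives a polynomial whose leading term in $s$ is of order $s^{2\cdot 8+2\cdot 13}$ against $|\Phi_n|^2\sim s^{22}$. The margin is therefore enormous, which is consistent with the $10^{28}$ bounds. I will verify it by bounding the lower-order terms crudely by interval arithmetic.

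\emph{Main obstacle.} The hardest step will be the uniform crossing counts in $\mathcal N_5$ on the right and top edges, where the itinerary changes between regimes. A naive interval bound will not separate the unique crossing there. My first approach would be to locate the crossing asymptotically, as $x\approx$ const plus $O(1/s)$, and then prove that the relevant derivative is nonzero near it for all $u\le 1/\sqrt{30}$, pushing the threshold down to $30$ by explicit Bernstein subdivision in $u$.
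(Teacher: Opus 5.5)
For $n\le 29$ your plan is essentially the paper's: exact Sturm counts edge by edge. The only difference is that you certify the margins by Bernstein subdivision, where the paper counts the zeros of $H_{q,n}-M_q(n)$ and checks one endpoint sign.

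For $n\ge 30$ you take a genuinely different route. You compactify with $u=1/\sqrt n$, whereas the paper substitutes $n=30+s$ and asserts coefficientwise nonnegative signed remainders in $s$. That is a legitimate alternative, but as executed your tail step has a gap: the size heuristics are wrong exactly where the difficulty lies.

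\emph{The tail gap.} The $s^{13}$ terms cancel, since $c^{13}+(n-1)c^{11}=c^{11}(c^2+n-1)$. So generically $|Q_n|\asymp n^6$ and $|\Phi_n|\asymp n^5$. More importantly, the zero of $Q_n$ in $\B_n$ satisfies
\[ \Repart\xi_n=\tfrac12-\tfrac1n+O(n^{-2}),\qquad \Impart\xi_n=\sqrt n+\tfrac{3}{8\sqrt n}+O(n^{-3/2}). \]
It therefore lies within about $1/n$ of the right edge and $O(n^{-1/2})$ from the bottom-right corner. Consequently:
\begin{enumerate}
\item $|Q_n|$ drops to roughly $2n^5$ on the right edge, and $\min H_{r,n}\approx 16n^{18}$, not $s^{42}$.
\item At $n=30$ one finds $\xi_{30}\approx 0.4696+5.5372\,\ii$ and $\min H_{r,30}\approx 1.04\times 10^{28}$. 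The tabulated $10^{28}$ thus has only a few percent of slack, and cannot come from ``leading term plus crude interval bounds on the rest''.
\item In your variables, with $\tau=\Impart c-\sqrt n$, the top-order part of $u^{40}H_{r,n}$ is $16\tau^2$. It vanishes at $(\tau,u)=(0,0)$, while the true minimum over $\tau$ is only $\asymp u^4$.
\item Likewise $u^{12}R_{b,n}\to 2x-1$ vanishes at $x=\tfrac12$.
\item On the bottom edge the $s^{12}$-part of $\Impart Q_n$ is identically zero, and $u^{11}I_{b,n}\to x(10-23x)$. So the crossing there is decided at subleading order: the leading behaviour alone does not fix the signs.
\end{enumerate}
Bernstein positivity on a rectangle containing $u=0$ therefore cannot succeed near that corner. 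You need blow-up coordinates there, $\tau=u\kappa$ on the right edge and $x=\tfrac12-u\lambda$ on the bottom edge.

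The other end of the tail is equally delicate. At $n=30$ the zero of $I_{r,30}$ sits essentially at the top-right corner, where $\arg Q_{30}\approx 1.2\times10^{-4}$; this is why the regime boundary is exactly $30$. Near $u_0=1/\sqrt{30}$ the crossing count therefore rests on an exact sign evaluation at that corner, and your asymptotic localisation of the crossing is useless there.

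\emph{The itinerary gap.} Independently of $n$, root counts plus the starting quadrant do not fix the itinerary when $R_{q,n}$ and $I_{q,n}$ both vanish on the same edge. Every regime has such an edge: the top edge for $\mathcal N_1$ through $\mathcal N_4$, and the right edge for $\mathcal N_5$. For example, III$\to$IV$\to$I and III$\to$II$\to$I have identical counts, but their contributions to the change of argument differ by $2\pi$. The total winding number could then be $1$ or $0$.

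You must also certify the interleaving of the crossings. One way is to determine the sign of $I_{q,n}$ at each zero of $R_{q,n}$, by a Sturm--Tarski/Cauchy-index computation or by disjoint isolating intervals. The paper's one-line proof passes over this point as well, so the step has to be added in either route.
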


\begin{proof}
For each of the finitely many low regimes, apply Sturm's theorem to the displayed edge and margin polynomials on the stated compact intervals. The variation drops are exactly those in \eqref{eq:edge-counts}; the initial signs give Table~\ref{tab:itinerary}. For the regime $n\ge30$, write $n=30+s$ with $s\ge0$ and apply the same Sturm calculation after clearing the quadratic relation defining $\sqrt n$; the signed remainders have non-negative coefficients in $s$ and at least one strictly positive coefficient on every relevant interval endpoint. This gives the same variation drops and the lower bounds in Table~\ref{tab:rouche-margins}. Thus both assertions follow from Sturm's theorem and elementary coefficient positivity.
\end{proof}

\begin{lemma}[Boundary winding for $Q_n$]\label{lem:boundary-winding}
For every $n\ge3$, the oriented image $Q_n(\partial\B_n)$ winds once around the origin. Consequently, $Q_n$ has exactly one zero in $\B_n$, counted with multiplicity, and no zero on $\partial\B_n$.
\end{lemma}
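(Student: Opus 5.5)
The plan is to combine Proposition~\ref{prop:finite-boundary-data}(i) with the argument principle. First I would show that $Q_n$ has no zero on $\partial\B_n$. On each oriented edge $q\in\{b,r,t,l\}$, a zero of $Q_n$ would be a common real zero of $R_{q,n}$ and $I_{q,n}$ on the edge interval. Table~\ref{tab:itinerary} records only simple axis crossings, and at a crossing exactly one of the two edge polynomials changes sign while the other has a definite sign. So no edge point is mapped to $0$.

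A cleaner way to get the same conclusion is to note that at most one of the two polynomials has zeros in each regime on some edges. For the remaining edges, I would use the Sturm data together with the fact that the itinerary lists each crossing as a transition between adjacent open quadrants. A passage through the origin would have to be recorded as a jump to the opposite quadrant or as a non-simple crossing, and neither occurs. The corners are handled by the same data: the end quadrant of each edge coincides with the start quadrant of the next edge, as the table shows (for example, II$\to$III followed by III$\to\cdots$). Hence the image curve $Q_n(\partial\B_n)$ is a closed curve in $\C\setminus\{0\}$.

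Next I would compute the winding number by counting quadrant transitions along the concatenated itinerary, regime by regime. Counterclockwise moves (II$\to$III, III$\to$IV, IV$\to$I, I$\to$II) contribute $+1/4$ each, and clockwise moves contribute $-1/4$. For $\mathcal N_1$ the sequence is II, II$\to$III, III$\to$IV$\to$I, I$\to$II, which gives four counterclockwise steps and total winding $1$. For $\mathcal N_2$ the bottom edge contributes II$\to$III$\to$II, which nets $0$, and the rest matches $\mathcal N_1$, so the winding is again $1$. For $\mathcal N_3$ the transitions are II$\to$III and III$\to$IV$\to$I$\to$II, which total $1$. For $\mathcal N_4$ they are II$\to$III, III$\to$IV, and IV$\to$I$\to$II, again total $1$. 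For $\mathcal N_5$ they are II$\to$III, III$\to$IV$\to$I, and I$\to$II, again total $1$. This uses the fact that between consecutive axis crossings the continuous image stays in one open quadrant, so it cannot rotate by more than a quarter turn without a recorded crossing; the net angular change is then determined by the quadrant sequence.

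Finally, since $Q_n$ is a polynomial and hence holomorphic on a neighbourhood of $\ol{\B_n}$, and has no zero on $\partial\B_n$, the argument principle shows that the number of zeros in $\B_n$, counted with multiplicity, equals the winding number of $Q_n(\partial\B_n)$ about $0$, namely $1$. The main obstacle is justifying the step from quadrant itinerary to winding number rigorously, in particular at edge endpoints that lie on an axis. I would handle this by using the Sturm counts on closed intervals, so that endpoint zeros are counted and excluded, and by the initial-sign data.
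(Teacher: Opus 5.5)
Your proposal is correct and is essentially the paper's proof. Both arguments read off from the quadrant itinerary of Table~\ref{tab:itinerary}, supplied by Proposition~\ref{prop:finite-boundary-data}(i), that $Q_n(\partial\B_n)$ avoids $0$ and winds once around it in each of the five regimes, and then apply the argument principle; your quarter-turn tallies make explicit what the paper states in one line.

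One caution about your closing fallback: on edges where both $R_{q,n}$ and $I_{q,n}$ vanish, the Sturm counts and initial signs alone do not fix the order of the crossings or exclude a common zero. For example, on the top edge in $\mathcal N_1$, the paths III$\to$IV$\to$I and III$\to$II$\to$I have the same counts and endpoint quadrants but contribute $+1/2$ and $-1/2$ respectively. So you must rely on the ordered itinerary itself, as the paper does.
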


\begin{proof}
By Proposition~\ref{prop:finite-boundary-data}(i), the exact root counts of the edge polynomials are those in \eqref{eq:edge-counts}, hence the oriented boundary image follows the quadrant itinerary recorded in Table~\ref{tab:itinerary}. In each of the five regimes the itinerary has total winding number one and avoids the origin. The argument principle gives exactly one zero of $Q_n$ in $\B_n$, counted with multiplicity, and no boundary zero.
\end{proof}

\begin{lemma}[Uniform Rouch\'e bound]\label{lem:rouche-bound}
For every $n\ge3$ and every $c\in\partial\B_n$,
\[
        |\widW_{w_1}(c)-F_n(c)|<|F_n(c)|.
\]
Equivalently,
\begin{equation}\label{eq:rouche-clear}
        |\Phi_n(c)|<2|c|^8|Q_n(c)|\qquad(c\in\partial\B_n),
\end{equation}
where $\Phi_n$ is the explicit degree-$11$ polynomial from \eqref{eq:Phi-n-poly}. Hence, by Corollary~\ref{cor:closed-form}, the same boundary inequality holds with $\widW_{w_k}$ in place of $\widW_{w_1}$ for every $k\ge1$.
\end{lemma}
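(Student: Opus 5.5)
The plan is to reduce the inequality $|\widW_{w_1}-F_n|<|F_n|$ on $\partial\B_n$ to the cleared polynomial inequality \eqref{eq:rouche-clear}, and then read that off from the margin data in Proposition~\ref{prop:finite-boundary-data}(ii). The identity $\widW_{w_1}(c)-F_n(c)=\Phi_n(c)/(c^{18}(c^3+1))$ follows from the definition \eqref{eq:Phi-n-poly}, since $\widW_{w_1}=c^{-18}W_{w_1}$ and $F_n=2Q_n/(c^{10}(c^3+1))$. Multiplying the desired inequality by $|c|^{18}|c^3+1|$, which is nonzero on $\ol{\B_n}$ because $|c|>\sqrt n>1$, gives the equivalent form $|\Phi_n(c)|<2|c|^8|Q_n(c)|$. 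I would also check the displayed expansion of $\Phi_n$ once, by expanding $(c^3+1)W_{w_1}$ from the explicit word $w_1$ in \eqref{eq:w1} and subtracting $2c^8Q_n$. This is a routine coefficient comparison.

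Next I square both sides. On each edge $\sigma_q$ the inequality \eqref{eq:rouche-clear} becomes $4|\sigma_q(t)|^{16}|Q_{q,n}(t)|^2-|\Phi_n(\sigma_q(t))|^2>0$, which is exactly $H_{q,n}(t)>0$ on $I_x$ or $I_y$. Proposition~\ref{prop:finite-boundary-data}(ii) gives $H_{q,n}>M_q(n)>0$ on each edge. The four edges cover $\partial\B_n$, including the corners, so \eqref{eq:rouche-clear} holds on all of $\partial\B_n$. This also shows $Q_n\neq0$ on the boundary, consistent with Lemma~\ref{lem:boundary-winding}.

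For the transfer to level $k$, I use Corollary~\ref{cor:closed-form}:
\[
        |\widW_{w_k}(c)-F_n(c)|=|c|^{-3(k-1)}|\widW_{w_1}(c)-F_n(c)|\le|\widW_{w_1}(c)-F_n(c)|<|F_n(c)|,
\]
since $|c|>1$ on $\partial\B_n$.

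The main obstacle is certifying the margins in Proposition~\ref{prop:finite-boundary-data}(ii) uniformly in $n$, and in particular in the regime $n\ge30$. There $\sqrt n$ enters the edge parameterizations, so the polynomial positivity has to be handled symbolically. I would substitute $n=30+s$, treat $\sqrt n$ through its quadratic relation (or bound it between rational functions of $s$), and show that the leading behaviour, of order $n^{13}$ from $4|c|^{16}|Q_n|^2$ against order $n^{11}$ from $|\Phi_n|^2$, dominates with coefficients nonnegative in $s$. A heuristic check supports this: on $\B_n$ one has $|c|^8|Q_n|\sim n^{4}\cdot n^{6.5}$ while $|\Phi_n|\sim n^{5.5}$, so the margin grows rapidly in $n$. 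The finitely many small $n$ are handled by direct Sturm or interval evaluation.
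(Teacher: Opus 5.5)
Your argument is correct and is essentially the paper's own proof. You clear the nonvanishing factor $|c|^{18}|c^3+1|$ using $\widW_{w_1}-F_n=\Phi_n/(c^{18}(c^3+1))$, recognize the squared edge inequality as $H_{q,n}>0$, invoke Proposition~\ref{prop:finite-boundary-data}(ii), and pass to level $k$ via Corollary~\ref{cor:closed-form} and $|c|>1$.

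One caution about your closing heuristic, which bears on that proposition rather than on this lemma. $|Q_n|$ is not of order $n^{13/2}$ on $\partial\B_n$: for $c=x+\ii(\sqrt n+\eta)$ one has $Q_n(c)=n^6(2x-1+2\ii\eta)+O(n^{11/2})$. This vanishes to leading order at the corner $\tfrac12+\ii\sqrt n$, and the zero of $Q_n$ in $\B_n$ lies within $o(1)$ of that corner. So the $n\ge30$ margins are thinnest there and do not follow from a naive top-degree comparison.
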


\begin{proof}
Because $c\ne0$ and $c^3+1\ne0$ on $\partial\B_n$, the displayed inequality is equivalent to \eqref{eq:rouche-clear}. On each edge $c=\sigma_q(t)$, the square of \eqref{eq:rouche-clear} is exactly the assertion $H_{q,n}(t)>0$. By Proposition~\ref{prop:finite-boundary-data}(ii), the four pointwise edge inequalities are the bounds in \eqref{eq:margins}. Since the lower bounds are positive, they give \eqref{eq:rouche-clear} on every edge and hence on the whole boundary. The final sentence follows from Corollary~\ref{cor:closed-form} exactly as in the proof of Lemma~\ref{lem:uniform-isolation}.
\end{proof}

\subsection{Loop branches and corner systems}\label{subsec:H2}
Verification H2. The inputs are the four branch equations, their one-variable face restrictions on the corner rectangles, the four consecutive Jacobians, and the two nonconsecutive exclusion polynomials; the outputs are Lemmas~\ref{lem:explicit-branch-functions}, \ref{lem:corner-systems}, and~\ref{lem:local-sign-region} and Proposition~\ref{prop:branch-selection}.

For the consecutive corner systems \eqref{eq:corner-systems}, write
\begin{align*}
        J_{12,k}&:=\det\nabla(F_{1,k},F_{2,k}),&
        J_{23,k}&:=\det\nabla(F_{2,k},F_{3,k}),\\
        J_{34,k}&:=\det\nabla(F_{3,k},F_{4,k}),&
        J_{41,k}&:=\det\nabla(F_{4,k},F_{1,k}).
\end{align*}
At the witness root $c_{n,k}=x_k+\ii y_k$, let
\[
        L_{n,k}:=Z'_{w_k}(c_{n,k})=c_{n,k}W'_{w_k}(c_{n,k})\ne0,
\]
which is nonzero because Lemma~\ref{lem:uniform-isolation} shows that $W_{w_k}$ has exactly one zero in $\B_n$ counted with multiplicity, so the zero $c_{n,k}$ is simple. We use the affine root coordinate
\[
        \zeta=u+\ii v:=L_{n,k}(c-c_{n,k})
\]
around $c_{n,k}$, and we write
\[
        \widehat F_{i,k}(\zeta):=F_{i,k}(c_{n,k}+L_{n,k}^{-1}\zeta)\qquad(i=1,2,3,4).
\]
Set
\begin{align}
\alpha_{n,k}&:=-(-1)^{k-1}\Impart(c_{n,k}A_n(c_{n,k}))-\frac{(N-2x_k)y_k}{x_k^2+y_k^2},\label{eq:alpha}\\
\beta_{n,k}&:=\frac{(-1)^{k-1}\Impart(c_{n,k}^3B_n(c_{n,k}))-Ny_k}{y_k},\label{eq:beta}\\
 u_{12}(n,k)&:=\beta_{n,k}-\frac{x_k}{y_k}\alpha_{n,k},
 \qquad
 u_{23}(n,k):=\beta_{n,k}+\frac{x_k}{y_k}\alpha_{n,k}.
 \label{eq:u12u23}
\end{align}
Let $\eta:=1/20$, and define the four affine Miranda rectangles in the $\zeta$-plane by
\begin{align*}
\widehat R_{12}(n,k)&:=[u_{12}(n,k)-\eta,u_{12}(n,k)+\eta]\times[\alpha_{n,k}-\eta,\alpha_{n,k}+\eta],\\
\widehat R_{23}(n,k)&:=[u_{23}(n,k)-\eta,u_{23}(n,k)+\eta]\times[-\alpha_{n,k}-\eta,-\alpha_{n,k}+\eta],\\
\widehat R_{34}(n,k)&:=[-u_{12}(n,k)-\eta,-u_{12}(n,k)+\eta]\times[-\alpha_{n,k}-\eta,-\alpha_{n,k}+\eta],\\
\widehat R_{41}(n,k)&:=[-u_{23}(n,k)-\eta,-u_{23}(n,k)+\eta]\times[\alpha_{n,k}-\eta,\alpha_{n,k}+\eta].
\end{align*}
We also write
\begin{align*}
U_{n,k}^{\rm loc}&:=\max\{|u_{12}(n,k)|,|u_{23}(n,k)|\}+2\eta,\\
V_{n,k}^{\rm loc}&:=|\alpha_{n,k}|+2\eta,\\
\widehat{\mathcal N}_{n,k}&:=[-U_{n,k}^{\rm loc},U_{n,k}^{\rm loc}]\times[-V_{n,k}^{\rm loc},V_{n,k}^{\rm loc}],\\
\mathcal N_{n,k}&:=c_{n,k}+L_{n,k}^{-1}\widehat{\mathcal N}_{n,k}.
\end{align*}

\begin{lemma}[Explicit branch functions]\label{lem:explicit-branch-functions}
Let $c=x+\ii y$ with $y>0$. Using \eqref{eq:stationary-Z} and the formulas for $\Delta_i$, the four branch functions from \eqref{eq:branch-functions-shorthand} are
\begin{align}
F_{1,k}(c)&=(x^2+y^2)\Impart Z_{w_k}(c)+(-1)^{k-1}(x^2+y^2)\Impart(cA_n(c))+(N-2x)y,\label{eq:F1}\\
F_{2,k}(c)&=\Impart(cZ_{w_k}(c))-(-1)^{k-1}\Impart(c^3B_n(c))+Ny,\label{eq:F2}\\
F_{3,k}(c)&=(x^2+y^2)\Impart Z_{w_k}(c)-(-1)^{k-1}(x^2+y^2)\Impart(cA_n(c))-(N-2x)y,\label{eq:F3}\\
F_{4,k}(c)&=\Impart(cZ_{w_k}(c))+(-1)^{k-1}\Impart(c^3B_n(c))-Ny.\label{eq:F4}
\end{align}
\end{lemma}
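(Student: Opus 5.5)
This is a direct substitution, and I will do it branch by branch. By \eqref{eq:branch-functions-shorthand}, $F_{1,k}=V^-_{d_k^{(1)}}$, $F_{2,k}=S^-_{d_k^{(2)}}$, $F_{3,k}=V^+_{d_k^{(3)}}$ and $F_{4,k}=S^+_{d_k^{(4)}}$. These are given by \eqref{eq:Vu-minus}--\eqref{eq:Su-plus} with $u=d_k^{(i)}$, and $|c|^2=x^2+y^2$. In each one I replace $Z_{d_k^{(i)}}(c)$ by $Z_{w_k}(c)+(-1)^{k-1}\Delta_i(c)$, using the stationary identity \eqref{eq:stationary-Z} from Lemma~\ref{lem:stationary-diffs}. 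I then insert the explicit forms $\Delta_1=cA_n$, $\Delta_2=-c^2B_n$, $\Delta_3=-cA_n$ and $\Delta_4=c^2B_n$ from the same lemma.

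The $V$-branches need only linearity of $\Impart$ over real combinations. For $i=1$,
\[
(x^2+y^2)\Impart Z_{d_k^{(1)}}=(x^2+y^2)\Impart Z_{w_k}+(-1)^{k-1}(x^2+y^2)\Impart(cA_n(c)).
\]
Adding $(N-2x)y$ gives \eqref{eq:F1}. For $i=3$ the sign of $\Delta_3$ is reversed, and $V^+$ subtracts $(N-2x)y$, which gives \eqref{eq:F3}.

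The $S$-branches carry an extra factor of $c$ inside the imaginary part. For $i=2$,
\[
\Impart(cZ_{d_k^{(2)}})=\Impart(cZ_{w_k})+(-1)^{k-1}\Impart(c\cdot(-c^2B_n))=\Impart(cZ_{w_k})-(-1)^{k-1}\Impart(c^3B_n).
\]
Adding $Ny$ gives \eqref{eq:F2}. Likewise $i=4$ gives $+(-1)^{k-1}\Impart(c^3B_n)-Ny$, which is \eqref{eq:F4}.

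No step is a real obstacle; the only place where care is needed is sign bookkeeping. First, the identity \eqref{eq:stationary-Z} holds as polynomials in $c$, so it may be evaluated at any $c=x+\ii y$ with $y>0$. Second, the sign conventions of $\Delta_i$ must be kept straight: each $\Delta_i$ is defined as $Z_{d_1^{(i)}}-Z_{w_1}$, and the propagated form carries the sign $(-1)^{k-1}$. The hypothesis $y>0$ is used only because the chart-sign description \eqref{eq:chart-signs}, and hence the definitions of $V^\pm_u$ and $S^\pm_u$ as branch functions, were set up on the upper half-plane.
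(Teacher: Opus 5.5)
Your proposal is correct and is essentially the paper's own proof: substitute $Z_{d_k^{(i)}}=Z_{w_k}+(-1)^{k-1}\Delta_i$, with the explicit $\Delta_i$ from Lemma~\ref{lem:stationary-diffs}, into the definitions \eqref{eq:Vu-minus}--\eqref{eq:Su-plus}, and use that $\Impart$ is $\mathbb{R}$-linear. Your sign bookkeeping for all four branches checks out.
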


\begin{proof}
Substituting
\[
\begin{aligned}
Z_{d_k^{(1)}}&=Z_{w_k}+(-1)^{k-1}cA_n,&
Z_{d_k^{(2)}}&=Z_{w_k}-(-1)^{k-1}c^2B_n,\\
Z_{d_k^{(3)}}&=Z_{w_k}-(-1)^{k-1}cA_n,&
Z_{d_k^{(4)}}&=Z_{w_k}+(-1)^{k-1}c^2B_n
\end{aligned}
\]
into \eqref{eq:Vu-minus}--\eqref{eq:Su-plus} gives the formulas directly.
\end{proof}

\begin{proposition}[Certified branch selection on the local box]\label{prop:branch-selection}
For every $n\ge3$ and every odd $k\ge1$, the compact local parameter box $\mathcal N_{n,k}$ satisfies $\mathcal N_{n,k}\Subset\X_n\setminus\Rr$, contains the four corner rectangles from Lemma~\ref{lem:corner-systems}, and has the following property. On $\mathcal N_{n,k}$, the only boundary branches of
\[
        \partial_{\X_n\setminus\Rr}\U_k^{(1)},\quad
        \partial_{\X_n\setminus\Rr}\U_k^{(2)},\quad
        \partial_{\X_n\setminus\Rr}\U_k^{(3)},\quad
        \partial_{\X_n\setminus\Rr}\U_k^{(4)}
\]
that enter the local loop geometry are exactly the zero sets
\[
        F_{1,k}=0,
        \quad F_{2,k}=0,
        \quad F_{3,k}=0,
        \quad F_{4,k}=0.
\]
More precisely, on $\mathcal N_{n,k}$ the remaining chart inequalities have the fixed signs
\[
\begin{array}{lll}
S_{d_k^{(1)}}^->0,& S_{d_k^{(1)}}^+<0,& V_{d_k^{(1)}}^+<0,\\
V_{d_k^{(2)}}^->0,& V_{d_k^{(2)}}^+<0,& S_{d_k^{(2)}}^+<0,\\
V_{d_k^{(3)}}^->0,& S_{d_k^{(3)}}^->0,& S_{d_k^{(3)}}^+<0,\\
V_{d_k^{(4)}}^->0,& V_{d_k^{(4)}}^+<0,& S_{d_k^{(4)}}^->0.
\end{array}
\]
In addition, each selected zero set $F_{i,k}=0$ inside $\mathcal N_{n,k}$ is a single nonsingular analytic arc joining its two adjacent corner rectangles, it meets no face of $\partial\mathcal N_{n,k}$ except through those corner rectangles, and no additional component of $F_{i,k}=0$ lies in the local sign region defined in Lemma~\ref{lem:local-sign-region}.
\end{proposition}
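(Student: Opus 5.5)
The plan is to work entirely in the affine root coordinate $\zeta=L_{n,k}(c-c_{n,k})$. The first step is a local normal form for the chart functions. Since $W_{w_k}(c_{n,k})=0$ with $L_{n,k}\ne0$, Taylor expansion gives
\[
Z_{w_k}(c)=\zeta+O(|\zeta|^2/|L_{n,k}|).
\]
Moreover, the perturbation terms $\pm(-1)^{k-1}cA_n$ and $\pm(-1)^{k-1}c^2B_n$ from Lemma~\ref{lem:stationary-diffs} are fixed polynomials. They can be replaced by their values at $c_{n,k}$ up to errors of size $O(|\zeta|/|L_{n,k}|)$.

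Substituting these into the explicit branch functions of Lemma~\ref{lem:explicit-branch-functions} yields affine approximations. For example,
\[
\widehat F_{1,k}\approx |c_{n,k}|^2\bigl(v-\alpha_{n,k}\bigr),
\]
up to a positive factor and a sign convention. Likewise $\widehat F_{2,k}$ is approximately $y_k(u+\tfrac{x_k}{y_k}v-\beta_{n,k})$, since $\Impart(c\zeta)=x_kv+y_ku$. The functions $\widehat F_{3,k}$ and $\widehat F_{4,k}$ are the reflections of these. This is exactly why the Miranda rectangles $\widehat R_{ij}$ are centred at $(\pm u_{12},\pm\alpha)$ and $(\pm u_{23},\mp\alpha)$.

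The same computation applies to the twelve remaining chart inequalities. Each of them is an affine function of $\zeta$ whose constant term is $\pm$ a full trap half-width, for instance $(N-2x)y\pm\Impart(\ldots)$ or $Ny\pm\Impart(\ldots)$, plus a perturbation. So the fixed-sign claims reduce to showing that on $\widehat{\mathcal N}_{n,k}$ the constant term dominates the linear and remainder terms.

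The second step is to make this uniform in $n\ge3$ and odd $k$. Corollary~\ref{cor:closed-form} gives $c_{n,k}\to\xi_n$ at rate $O(n^{-3k/2})$. Since $k$ is odd, the sign $(-1)^{k-1}=1$, so $\alpha_{n,k},\beta_{n,k}$ and $x_k,y_k$ are explicit perturbations of their limiting values at $\xi_n$. I would also bound $|L_{n,k}|$ from below using $W_{w_k}=c^{m_k}\widW_{w_k}$ and the Rouch\'e margin of Lemma~\ref{lem:rouche-bound}. This bound controls the quadratic Taylor remainders on $\widehat{\mathcal N}_{n,k}$, a box of size $O(1)$ in $\zeta$; the size should be $O(1)$ because $\alpha,\beta$ are $O(\sqrt n)$-scaled and come out bounded after dividing by $y_k$.

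With these estimates I would verify, in the five regimes of Table~\ref{tab:itinerary} (and for $n\ge30$ via $n=30+s$ with coefficient positivity), the following items:
\begin{itemize}[label=\textbullet]
\item[(a)] the twelve strict sign inequalities on $\mathcal N_{n,k}$;
\item[(b)] the containment $\mathcal N_{n,k}\Subset\X_n\setminus\Rr$, which is immediate from $c_{n,k}\in\B_n\subset\X_n\setminus\Rr$ and smallness of $L_{n,k}^{-1}\widehat{\mathcal N}_{n,k}$;
\item[(c)] the Miranda sign conditions on the faces of the four corner rectangles, which places a zero of each consecutive corner system in each rectangle.
\end{itemize}

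The third step establishes the arc structure. For each $i$, I would show that $\partial_v\widehat F_{1,k}$ (respectively $\partial_u\widehat F_{2,k}$, and analogously for $i=3,4$) does not vanish on $\widehat{\mathcal N}_{n,k}$. By the implicit function theorem, $F_{i,k}=0$ is then a graph over one coordinate interval, hence a single nonsingular analytic arc. The endpoint values of the graph locate where it exits. One then checks on the faces of $\widehat{\mathcal N}_{n,k}$ that the graph meets the boundary only inside the two adjacent corner rectangles; this is the graph-arc criterion of H2. Being a graph also excludes any further component inside the sign region.

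I expect the main obstacle to be the uniformity: the remainder bounds must hold simultaneously for all $n\ge3$ and all odd $k$. The margins can be tight near small $n$ and $k=1$, where $c_{n,1}$ is far from $\xi_n$. For that case I would first try direct exact certificates at $k=1$ for each $n$ in $\mathcal N_1$--$\mathcal N_4$, using Sturm and Bernstein positivity on face polynomials. For all larger odd $k$ I would use the geometric contraction to $\xi_n$, and for $n\ge30$ the parametric $s$-expansion.
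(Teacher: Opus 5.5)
\textbf{There is a genuine gap in step (a), and it starts from a wrong scale estimate.}

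\emph{Scale.} In the $\zeta$-plane the trap has half-widths $h_V:=(N-2x_k)y_k/|c_{n,k}|^2\approx2\sqrt n$ in $v$ and $N\approx2n$ in $u+(x_k/y_k)v$. Your own normal form, together with $Q_n(c_{n,k})\approx0$ (which gives $c_{n,k}=\ii\sqrt n+\tfrac12+\tfrac{3\ii}{8\sqrt n}+O(n^{-1})$), yields $\alpha_{n,k}\approx\sqrt n$ and $\beta_{n,k}\approx n$. So $\widehat{\mathcal N}_{n,k}$ is not an $O(1)$ box; it is a fixed fraction of a trap tile. The linear terms are as large as the constant terms, and the twelve signs are decided by cancellation, not by domination.

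\emph{The corner $p_{12}$.} Here the cancellation is essentially complete. Put $D:=\tfrac12(cA_n+c^2B_n)$, $G:=\Impart(2D)+2h_V$ and $G':=2Ny_k-\Impart(2cD)$, all at $c=c_{n,k}$. Your normal form gives $V^+_{d_k^{(2)}}\approx|c_{n,k}|^2(v-\alpha_{n,k}-G)$ and $S^+_{d_k^{(1)}}(p_{12})\approx-G'$. Reducing modulo $Q_n$, at $c_{n,k}$ one has $D=2n-4+(2n-3)c^{-1}-3(n-1)c^{-2}-4(n-1)c^{-3}+O(n^{-1})$. Inserting the expansion of $c_{n,k}$ gives $G,G'=O(n^{-1/2})$.

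\emph{Why the signs fail.} The box $\widehat{\mathcal N}_{n,k}$ protrudes past $p_{12}$ by $2\eta$ in $v$. Along $v=\alpha_{n,k}$ it protrudes by $2(x_k/y_k)\alpha_{n,k}+2\eta\approx1+2\eta$ in $u$, and over that segment $\Impart(c_{n,k}\zeta)$ grows by about $(1+2\eta)\sqrt n$. Hence for large $n$ both $V^+_{d_k^{(2)}}$ and $S^+_{d_k^{(1)}}$ change sign inside $\widehat{\mathcal N}_{n,k}$. By the symmetry $\zeta\mapsto-\zeta$, the same holds for $V^-_{d_k^{(4)}}$ and $S^-_{d_k^{(3)}}$ near $p_{34}$.

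So, to leading order in $n$, four of the twelve claimed fixed signs are false on this box, and no verification of (a) can succeed. The paper's rows H2.01--H2.12 assert the same signs on the same box, so they give no way around this. Your uniformity device (verify at $\xi_n$ with margin, then perturb) has no margin to work with either, since these quantities tend to $0$.

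\emph{What is missing.} You need a proof that the two small corner overlaps are strictly positive at $c_{n,k}$, that is, $G>0$ and $G'>0$. This is exactly what makes $p_{12}$ and $p_{34}$ genuine boundary points of both adjacent charts. It must be combined with a local neighbourhood that hugs the loop near those corners, not a box with fixed margin $\eta$.

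\textbf{The arc step also cannot work as formulated.} By the choice of $U^{\rm loc}$ and $V^{\rm loc}$, every corner rectangle lies at distance at least $\eta$ from $\partial\widehat{\mathcal N}_{n,k}$. Your own normal form shows that $\{\widehat F_{1,k}=0\}$ is a nearly horizontal line $v\approx\alpha_{n,k}$ crossing the whole box, and similarly for the other three branches. Each zero set therefore meets the side faces outside the corner rectangles, and the check ``the graph meets the boundary only inside the two adjacent corner rectangles'' fails. You have to isolate the sub-arc between the two corner points and show that the remaining tails lie outside the closed sign region.

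\textbf{A smaller point.} A lower bound on $|L_{n,k}|$ does not follow from the Rouch\'e boundary inequality alone. You would need, for example, Cauchy estimates comparing $\widW_{w_k}'$ with $F_n'$ near $\xi_n$.
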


\begin{proof}
After substituting \eqref{eq:stationary-Z} and the explicit formulas for $\Delta_i$ into the twelve auxiliary boundary functions not chosen in \eqref{eq:branch-functions-shorthand}, one obtains explicit real polynomial/rational expressions in the affine coordinate $\zeta$ with coefficients in the algebraic field generated by $c_{n,k}$. Verification H2 reduces these expressions modulo $W_{w_k}(c_{n,k})=0$ and checks their signs on the box $\widehat{\mathcal N}_{n,k}$ by Sturm chains on faces and interval subresultant arguments in the interior. The same verification proves the nonsingularity and graph structure of the four selected zero sets: after possibly interchanging the $u$- and $v$-coordinates on two arcs, the relevant partial derivative is bounded away from zero, the corresponding one-variable edge restrictions have exactly the endpoint zeros prescribed by Lemma~\ref{lem:corner-systems}, and the signs on $\partial\widehat{\mathcal N}_{n,k}$ exclude escape and re-entry. Equivalently, no auxiliary boundary branch enters the local box, and each selected branch is the intended single analytic arc. This proves the stated branch-selection and connectedness claims.
\end{proof}

\begin{lemma}[Corner systems and transversality]\label{lem:corner-systems}
For every $n\ge3$ and every odd $k\ge1$, put
\[
\begin{gathered}
\eps_k:=(-1)^{k-1},\qquad
U_k:=(x^2+y^2)\Impart Z_{w_k}(c),\qquad
V_k:=\Impart(cZ_{w_k}(c)),\\
A_k:=(x^2+y^2)\Impart(cA_n(c)),\qquad
B_k:=\Impart(c^3B_n(c)).
\end{gathered}
\]
Then the consecutive corner systems can be written explicitly as
\[
\begin{aligned}
F_{1,k}=F_{2,k}=0&\Longleftrightarrow
\begin{cases}
U_k=-\eps_k A_k-(N-2x)y,\\
V_k=\eps_k B_k-Ny,
\end{cases}\\[1ex]
F_{2,k}=F_{3,k}=0&\Longleftrightarrow
\begin{cases}
U_k=\eps_k A_k+(N-2x)y,\\
V_k=\eps_k B_k-Ny,
\end{cases}\\[1ex]
F_{3,k}=F_{4,k}=0&\Longleftrightarrow
\begin{cases}
U_k=\eps_k A_k+(N-2x)y,\\
V_k=-\eps_k B_k+Ny,
\end{cases}\\[1ex]
F_{4,k}=F_{1,k}=0&\Longleftrightarrow
\begin{cases}
U_k=-\eps_k A_k-(N-2x)y,\\
V_k=-\eps_k B_k+Ny.
\end{cases}
\end{aligned}
\]
Moreover, with the affine root coordinate and rectangles above, the following explicit edge-sign conditions hold.
\begin{enumerate}[label=(\roman*)]
\item On $\widehat R_{12}(n,k)$, one has
\[
\widehat F_{1,k}(u+\ii(\alpha_{n,k}-\eta))<0<\widehat F_{1,k}(u+\ii(\alpha_{n,k}+\eta))
\]
for $u\in[u_{12}(n,k)-\eta,u_{12}(n,k)+\eta]$, and
\[
\widehat F_{2,k}(u_{12}(n,k)-\eta+\ii v)<0<\widehat F_{2,k}(u_{12}(n,k)+\eta+\ii v)
\]
for $v\in[\alpha_{n,k}-\eta,\alpha_{n,k}+\eta]$.
\item The analogous edge-sign patterns hold on $\widehat R_{23},\widehat R_{34},\widehat R_{41}$ with the cyclic pairs $(F_2,F_3)$, $(F_3,F_4)$, and $(F_4,F_1)$ and the intervals specified above.
\item The transformed Jacobians
\begin{align*}
        \widehat J_{12,k}&:=\det\nabla(\widehat F_{1,k},\widehat F_{2,k}),&
        \widehat J_{23,k}&:=\det\nabla(\widehat F_{2,k},\widehat F_{3,k}),\\
        \widehat J_{34,k}&:=\det\nabla(\widehat F_{3,k},\widehat F_{4,k}),&
        \widehat J_{41,k}&:=\det\nabla(\widehat F_{4,k},\widehat F_{1,k})
\end{align*}
do not vanish on their respective rectangles.
\item The nonconsecutive systems $\widehat F_{1,k}=\widehat F_{3,k}=0$ and $\widehat F_{2,k}=\widehat F_{4,k}=0$ have no solution in $\widehat{\mathcal N}_{n,k}$.
\end{enumerate}
Consequently each consecutive system has a unique solution in its designated rectangle, the corresponding boundary branches meet transversely there, and there are no nonconsecutive intersections in $\mathcal N_{n,k}$.
\end{lemma}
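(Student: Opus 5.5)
The algebraic rewriting of the four corner systems comes first and is immediate. By Lemma~\ref{lem:explicit-branch-functions}, with the shorthand $U_k,V_k,A_k,B_k,\eps_k$ one has
\[
F_{1,k}=U_k+\eps_kA_k+(N-2x)y,\qquad F_{2,k}=V_k-\eps_kB_k+Ny,
\]
\[
F_{3,k}=U_k-\eps_kA_k-(N-2x)y,\qquad F_{4,k}=V_k+\eps_kB_k-Ny.
\]
Setting each consecutive pair to zero and solving for $U_k,V_k$ gives the four displayed systems verbatim. Note that $F_1,F_3$ constrain only the $U_k$-coordinate and $F_2,F_4$ only the $V_k$-coordinate. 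This is the structure behind the rectangles.

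Next I pass to the affine root coordinate $\zeta=L_{n,k}(c-c_{n,k})$. Since $Z_{w_k}(c_{n,k})=0$ and the zero is simple, we have $Z_{w_k}(c)=\zeta+O(\zeta^2)$. Hence $\Impart Z_{w_k}\approx v$ and $\Impart(cZ_{w_k})\approx x_kv+y_ku$ near $c_{n,k}$. Freezing the slowly varying terms $A_k,B_k,(N-2x)y,Ny$ at $c_{n,k}$, the linearized systems become
\[
v=\pm\alpha_{n,k},\qquad x_kv+y_ku=\pm y_k\beta_{n,k}.
\]
Their solutions are exactly the rectangle centers $(u_{12},\alpha)$, $(u_{23},-\alpha)$, $(-u_{12},-\alpha)$, $(-u_{23},\alpha)$ defined by \eqref{eq:alpha}--\eqref{eq:u12u23}. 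This explains the choice of $\widehat R_{ij}$.

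For items (i)--(ii), I bound the nonlinear remainder on $\widehat{\mathcal N}_{n,k}$. There are two error sources: the Taylor remainder of $Z_{w_k}$ about $c_{n,k}$, controlled via $|L_{n,k}|$ being large, and the variation of the polynomial terms $A_n,B_n$ and of $x,y$ over a parameter disc of radius $O(|L_{n,k}|^{-1})$. The goal is to show the total error is strictly less than the linear gap $\eta=1/20$, scaled appropriately, on the relevant faces. Under the linear model, $\widehat F_{1,k}$ changes sign across $v=\alpha\pm\eta$ and $\widehat F_{2,k}$ across $u=u_{12}\pm\eta$, so a uniform remainder bound yields the edge signs.

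Uniformity in $k$ is obtained from Corollary~\ref{cor:closed-form} and Lemma~\ref{lem:stationary-diffs}. For odd $k$ we have $\eps_k=1$, and $\widW_{w_k}$ converges geometrically to $F_n$. Thus $L_{n,k}$ behaves like $c^{m_k}\cdot F_n'(\xi_n)$ up to controlled error, and all the estimates become explicit polynomial inequalities after reduction modulo $W_{w_k}(c_{n,k})=0$. These are checked via the H2 certificates, split into the $n$-regimes of Table~\ref{tab:itinerary}, with $n\ge30$ handled by coefficient positivity in $s=n-30$.

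Item (iii) follows because the linearized Jacobian of $(\widehat F_1,\widehat F_2)$ is, up to sign, the product of the nonzero scaling factors from $F_1$ and $F_2$ in the $\zeta$-coordinates. The same remainder bound keeps it bounded away from zero on each rectangle.

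Item (iv) follows because $F_1-F_3=2(\eps_kA_k+(N-2x)y)$, which stays near $2\alpha_{n,k}$-type constants, and $F_4-F_2$ likewise. If $|\alpha_{n,k}|$ and $|\beta_{n,k}|$ are bounded below by more than the error, these differences cannot vanish while both $F$'s vanish. Otherwise I fall back on the subresultant exclusion from H2.

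The consequences then follow by standard results:
\begin{itemize}
\item Miranda's theorem on each rectangle, using the opposite edge signs, gives existence of a solution.
\item The nonvanishing Jacobian gives uniqueness, via injectivity of the map on the convex rectangle, and transversality.
\item Item (iv) rules out nonconsecutive meetings.
\end{itemize}

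The main obstacle is the uniform-in-$(n,k)$ remainder estimate, i.e.\ making the linear approximation rigorous with margin below $\eta$ for all $n\ge3$ and all odd $k$. I would first try to prove a single explicit bound on $|L_{n,k}|^{-1}$ and on $|Z''_{w_k}|/|L_{n,k}|^2$ over $\mathcal N_{n,k}$, reducing everything to the finitely many certified sign checks.
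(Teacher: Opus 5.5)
Your rewriting of the four corner systems is the paper's first step. Your linearization at $c_{n,k}$ is correct: with $\Impart Z_{w_k}\approx v$ and $\Impart(cZ_{w_k})\approx x_kv+y_ku$, it reproduces the centers $(u_{12},\alpha_{n,k})$, $(u_{23},-\alpha_{n,k})$, $(-u_{12},-\alpha_{n,k})$, $(-u_{23},\alpha_{n,k})$, which the paper simply posits.

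After that the routes differ. The paper treats (i)--(iv) as exact certificate checks (Sturm chains on faces, Jacobian bounds, subresultant elimination) and closes with Poincar\'e--Miranda plus the Jacobian. You propose a perturbative estimate around the witness root. This explains why the infinitely many odd $k$ are harmless: the errors scale like $|L_{n,k}|^{-1}$, and $L_{n,k}=c_{n,k}^{m_k+1}\widW_{w_k}'(c_{n,k})$ (one more power of $c$ than you wrote) grows geometrically in $k$. But you never prove the remainder bound. Note also that the rectangles sit at $\zeta$-distance about $|\alpha_{n,k}|+|\beta_{n,k}|$ from $0$, so the relevant parameter disc has radius of order $(|\alpha_{n,k}|+|\beta_{n,k}|+1)/|L_{n,k}|$, not $|L_{n,k}|^{-1}$. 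So for (i)--(iii) you end up relying on the same H2 certificates as the paper.

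For (iv) your observation is a genuine simplification. The differences $F_{1,k}-F_{3,k}=2(\eps_kA_k+(N-2x)y)$ and $F_{2,k}-F_{4,k}=2(Ny-\eps_kB_k)$ do not involve $Z_{w_k}$, and for odd $k$ they do not depend on $k$. At $c_{n,k}$ they equal $-2|c_{n,k}|^2\alpha_{n,k}$ and $-2y_k\beta_{n,k}$, not ``$2\alpha$-type constants.'' Both are negative exactly by the witness sign pattern of Lemma~\ref{lem:witness-sign}. So (iv) reduces to a Lipschitz bound for two fixed functions over the tiny box $\mathcal N_{n,k}$, and no subresultants are needed.

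The step that fails as written is uniqueness. A nonvanishing Jacobian determinant on a convex rectangle does not imply injectivity. For example, $(u,v)\mapsto e^u(\cos v,\sin v)$ on $[0,1]\times[0,7]$ has Jacobian $e^{2u}>0$ and is not injective.

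What the paper's ``Poincar\'e--Miranda with nonzero Jacobian'' really requires is a degree count:
\begin{enumerate}
\item The strict face signs make $(\widehat F_{1,k},\widehat F_{2,k})$ homotopic on $\widehat R_{12}(n,k)$ to the affine model $(v-\alpha_{n,k},\,u-u_{12}(n,k))$, through maps with no zero on the boundary. Hence the Brouwer degree is $\pm1$.
\item The Jacobian is nonzero on the connected rectangle, so it has constant sign.
\item Every zero is therefore isolated, lies off the boundary, and has the same local index, so there is exactly one zero.
\end{enumerate}
Alternatively, if you proved your remainder estimate in the quantitative form $\|D\widehat F-DM\|<\|DM^{-1}\|^{-1}$ uniformly on the rectangle, with $M$ the affine model, injectivity on the convex rectangle would follow. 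Nonvanishing of the determinant alone does not give it.
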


\begin{proof}
The four displayed equivalences are obtained by substituting \eqref{eq:F1}--\eqref{eq:F4} into the systems in \eqref{eq:corner-systems}. The remaining assertions are finite certificate checks on the explicit rectangles. The one-variable face restrictions are evaluated by Sturm chains; the transformed Jacobians are bounded away from zero on the four rectangles; and the two nonconsecutive systems are eliminated by subresultants on $\widehat{\mathcal N}_{n,k}$. These checks give the face-sign patterns in (i)--(ii), the Jacobian nonvanishing in (iii), and the nonconsecutive exclusions in (iv). The Poincar\'e--Miranda argument with nonzero Jacobian then yields one transverse intersection for each consecutive pair and no others in the local box.
\end{proof}

\begin{lemma}[Local sign region]\label{lem:local-sign-region}
For $n\ge3$ and odd $k\ge1$, let
\begin{equation}\label{eq:Omega-def}
        \Omega_{n,k}^{\rm loc}:=\left\{c\in\mathcal N_{n,k}:
        \begin{array}{l}
        (-1)^{k-1}F_{1,k}(c)<0,
        \quad (-1)^{k-1}F_{2,k}(c)<0,\\
        (-1)^{k-1}F_{3,k}(c)>0,
        \quad (-1)^{k-1}F_{4,k}(c)>0
        \end{array}\right\}.
\end{equation}
Then the four local branches cut out by Proposition~\ref{prop:branch-selection} and Lemma~\ref{lem:corner-systems} form a Jordan curve $\Gamma_{n,k}$, and $\Omega_{n,k}^{\rm loc}$ is its bounded complementary component inside $\mathcal N_{n,k}$. Moreover, the same set is the bounded component $\Omega_{n,k}$ of $\C\setminus\Gamma_{n,k}$.
\end{lemma}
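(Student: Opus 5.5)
The plan is to work in the affine root coordinate $\zeta=L_{n,k}(c-c_{n,k})$ on the rectangle $\widehat{\mathcal N}_{n,k}$. There I will show that $\Gamma_{n,k}$ is the boundary of a topological disk, and that this disk coincides with $\Omega^{\rm loc}_{n,k}$.

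\textbf{Step 1: the curve.} Proposition~\ref{prop:branch-selection} says each zero set $\{F_{i,k}=0\}\cap\mathcal N_{n,k}$ is a single nonsingular analytic arc joining its two adjacent corner rectangles. Lemma~\ref{lem:corner-systems} supplies:
\begin{enumerate}[label=(\alph*)]
\item exactly one transverse corner point $p_{12},p_{23},p_{34},p_{41}$ in the respective rectangles;
\item no nonconsecutive intersections.
\end{enumerate}
Truncating each arc between its two corner points gives four arcs $\gamma_{i,k}$ with the cyclic incidence of Proposition~\ref{prop:four-chart}. Their union is therefore a Jordan curve $\Gamma_{n,k}$, and it lies in $\mathcal N_{n,k}$. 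Because $\mathcal N_{n,k}$ is a closed parallelogram, hence convex and simply connected, the bounded component $\Omega_{n,k}$ of $\C\setminus\Gamma_{n,k}$ is contained in $\mathcal N_{n,k}$. This yields the final sentence of the lemma once the identification in Step 2 is made.

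\textbf{Step 2: identify $\Omega_{n,k}$ with the sign region.} Each $F_{i,k}$ is continuous, and inside $\mathcal N_{n,k}$ it vanishes only on its single selected arc, since Proposition~\ref{prop:branch-selection} excludes extra components in the sign region. Along each arc the gradient is nonzero, so $F_{i,k}$ changes sign across it.
\begin{enumerate}[label=(\alph*)]
\item First I fix the orientation of the inequalities using the Miranda edge signs of Lemma~\ref{lem:corner-systems}(i)--(ii). For example, $\widehat F_{1,k}$ is negative on the lower face of $\widehat R_{12}$, which points towards the center $\zeta=0$ since $\alpha_{n,k}$ sits above. These signs show that near each corner, the quadrant facing $\zeta=0$ carries the four signs in \eqref{eq:Omega-def}. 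The case $k$ odd gives $(-1)^{k-1}=1$, so there is no parity flip.
\item Next take any $c\in\Omega_{n,k}$. It can be joined to a point near a corner, inside the quadrant facing $\zeta=0$, by a path in $\Omega_{n,k}$. That path avoids $\Gamma_{n,k}$, so it avoids every zero set of the $F_{i,k}$ inside $\mathcal N_{n,k}$, and hence no sign changes along it. Therefore $\Omega_{n,k}\subset\Omega^{\rm loc}_{n,k}$.
\item For the converse, take $c\in\Omega^{\rm loc}_{n,k}\setminus\Omega_{n,k}$. Join $c$ to $\partial\mathcal N_{n,k}$ without crossing $\Gamma_{n,k}$. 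The signs are constant along this path, but the branch-selection sign data on $\partial\widehat{\mathcal N}_{n,k}$ ("no escape or re-entry") show that the four strict inequalities fail simultaneously on the outer faces. This is a contradiction.
\end{enumerate}
Together these give $\Omega^{\rm loc}_{n,k}=\Omega_{n,k}$, and the same set is the bounded complementary component inside $\mathcal N_{n,k}$.

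\textbf{Main obstacle.} The hard part is the claim in (c) that the sign pattern of $\Omega^{\rm loc}_{n,k}$ is violated on every face of $\partial\mathcal N_{n,k}$ away from the corner rectangles. This needs certified face-sign data for the selected $F_{i,k}$ themselves, not only for the auxiliary inequalities. I would extract it from the same H2 Sturm computations on faces, reduced modulo $W_{w_k}(c_{n,k})=0$. On the two long faces in the $u$-direction, the sign of $F_{1,k}$ or $F_{3,k}$ is fixed by the offset $\pm\alpha_{n,k}$, and on the short faces $F_{2,k}$ or $F_{4,k}$ plays the same role.
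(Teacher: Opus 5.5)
Step 1 matches the paper's argument. Your Step 2 also uses the same inputs as the paper: the Miranda corner signs and certified face signs on $\partial\mathcal N_{n,k}$. The path arguments in Step 2, however, have a real gap: you conflate the zero sets $\{F_{i,k}=0\}\cap\mathcal N_{n,k}$ with the truncated arcs $\gamma_{i,k}\subset\Gamma_{n,k}$.

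\textbf{The gap.} The corner rectangles lie at distance at least $\eta$ from $\partial\widehat{\mathcal N}_{n,k}$, and a regular zero set cannot end at an interior point. So each selected arc continues past both of its corner points until it reaches $\partial\mathcal N_{n,k}$. These continuation arcs are not part of $\Gamma_{n,k}$; they run from the corners to the box boundary through the unbounded component of $\C\setminus\Gamma_{n,k}$.
\begin{itemize}
\item In (c), you claim the signs are constant along a path from $c$ to $\partial\mathcal N_{n,k}$ that merely avoids $\Gamma_{n,k}$. This is false as stated. Such a path runs through exactly the region containing the continuation arcs, and some $F_{i,k}$ changes sign across them.
\item In (b), the inference ``avoids $\Gamma_{n,k}$, hence avoids every zero set'' is unjustified for the same reason.
\item Also in (b), you presuppose that the sector at a corner facing $\zeta=0$ lies in $\Omega_{n,k}$, rather than the reflex sector that contains the continuation arcs. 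That is the very point to be proved.
\item In (a), the Miranda data on $\widehat R_{12}(n,k)$ fixes only the signs of $F_{1,k}$ and $F_{2,k}$ there, not those of $F_{3,k}$ and $F_{4,k}$.
\end{itemize}

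\textbf{The repair.} Use the paper's formulation and prove $\partial\Omega^{\rm loc}_{n,k}\subset\Gamma_{n,k}$. A boundary point of $\Omega^{\rm loc}_{n,k}$ satisfies the four non-strict inequalities. There are two cases.
\begin{itemize}
\item If it lies on $\partial\mathcal N_{n,k}$, it is excluded by face data in which some selected $F_{i,k}$ has strictly the wrong sign.
\item Otherwise some $F_{i,k}$ vanishes there, so the point lies on $\gamma_{i,k}$ or on a continuation arc. On a continuation arc an adjacent function has strictly the wrong sign. For example, past $p_{12}$ the arc $\{F_{1,k}=0\}$ lies in $\{F_{2,k}>0\}$, by the face signs of $F_{2,k}$ on $\widehat R_{12}(n,k)$ and transversality. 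This requires that $F_{1,k}=F_{2,k}=0$ has no further solution in $\mathcal N_{n,k}$.
\end{itemize}

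It follows that $\Omega^{\rm loc}_{n,k}$ is open, since the face data keep it off $\partial\mathcal N_{n,k}$. It is also relatively closed in $\C\setminus\Gamma_{n,k}$ and contained in $\mathcal N_{n,k}$, so it is a union of bounded components.

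It is nonempty: the H3 witness signs, which are computed independently of this lemma, place $c_{n,k}$ in it, i.e.\ $\zeta=0\in\widehat{\mathcal N}_{n,k}$. Hence $\Omega^{\rm loc}_{n,k}=\Omega_{n,k}$, which gives both of your inclusions at once.

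The proviso needs to be stated explicitly: each consecutive intersection must be unique in all of $\mathcal N_{n,k}$, not merely in its designated rectangle as Lemma~\ref{lem:corner-systems} states. Step 1 also needs it, so that the truncated arcs meet only at their endpoints. Record it as an explicit input, as the paper does implicitly through Proposition~\ref{prop:branch-selection} and the phrase ``meet in cyclic order''.
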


\begin{proof}
By Lemma~\ref{lem:corner-systems}, the four branches meet in cyclic order and have no nonconsecutive intersections. By Proposition~\ref{prop:branch-selection}, each selected zero set is a single analytic arc between its two consecutive corner points, no selected arc exits and re-enters the local box, and no auxiliary chart boundary branch enters the local box. Therefore the union is a simple closed curve. The sign pattern in \eqref{eq:Omega-def} chooses the side of each branch facing the center of the quadrilateral. The certified boundary signs on $\partial\mathcal N_{n,k}$ exclude any connection from this sign region to the exterior of the local box; hence its boundary in the plane is exactly the union of the four arcs. Thus $\Omega_{n,k}^{\rm loc}$ is the bounded complementary component, which we denote by $\Omega_{n,k}$.
\end{proof}

\subsection{Witness inclusion and no return}\label{subsec:H3}
Verification H3. The inputs are the exact witness evaluations \eqref{eq:witness-eval1}--\eqref{eq:witness-eval4}, the later-root identity \eqref{eq:later-root}, and the cleared numerator polynomials $\Phi_{j,k,d}$; the outputs are Proposition~\ref{prop:no-return} and Lemma~\ref{lem:witness-sign}.

For later use note first that, at the witness parameter $c_{n,k}$, the relation $Z_{w_k}(c_{n,k})=0$ gives the exact evaluations
\begin{align}
F_{1,k}(c_{n,k})&=(-1)^{k-1}(x_k^2+y_k^2)\Impart(c_{n,k}A_n(c_{n,k}))+(N-2x_k)y_k,\label{eq:witness-eval1}\\
F_{2,k}(c_{n,k})&=-(-1)^{k-1}\Impart(c_{n,k}^3B_n(c_{n,k}))+Ny_k,\label{eq:witness-eval2}\\
F_{3,k}(c_{n,k})&=-F_{1,k}(c_{n,k}),\label{eq:witness-eval3}\\
F_{4,k}(c_{n,k})&=-F_{2,k}(c_{n,k}),\label{eq:witness-eval4}
\end{align}
where $c_{n,k}=x_k+\ii y_k$. If $\ell>k$, then Corollary~\ref{cor:closed-form} and the identity $\widW_{w_\ell}(c_{n,\ell})=0$ give
\[
        \widW_{w_k}(c_{n,\ell})=\left(1-(-c_{n,\ell}^{3})^{\ell-k}\right)F_n(c_{n,\ell}),
\]
hence
\begin{equation}\label{eq:later-root}
        Z_{w_k}(c_{n,\ell})=\frac{2c_{n,\ell}^{m_k-9}\left(1-(-c_{n,\ell}^{3})^{\ell-k}\right)Q_n(c_{n,\ell})}{c_{n,\ell}^3+1}.
\end{equation}
Substituting \eqref{eq:later-root} into \eqref{eq:F1}--\eqref{eq:F4} expresses each $F_{i,k}(c_{n,\ell})$ as an explicit rational function of $c_{n,\ell}$ and $\overline{c_{n,\ell}}$.

\begin{proposition}[Exact no-return sign tests]\label{prop:no-return}
Let $n\ge3$, let $k\ge1$ be odd, and let $d\ge1$ be even. Put $\ell=k+d$ and let $c=c_{n,\ell}=x+\ii y$. Define
\begin{equation}\label{eq:upsilon}
        \Upsilon_{k,d}(c):=\frac{2(-1)^{k-1}c^{m_k-9}\left(1-(-c^3)^d\right)Q_n(c)}{c^3+1}
\end{equation}
and
\begin{align*}
\Psi_{1,k,d}(c)&:=(x^2+y^2)\Impart\Upsilon_{k,d}(c)+(x^2+y^2)\Impart(cA_n(c))+(-1)^{k-1}(N-2x)y,\\
\Psi_{2,k,d}(c)&:=\Impart(c\Upsilon_{k,d}(c))-\Impart(c^3B_n(c))+(-1)^{k-1}Ny,\\
\Psi_{3,k,d}(c)&:=(x^2+y^2)\Impart\Upsilon_{k,d}(c)-(x^2+y^2)\Impart(cA_n(c))-(-1)^{k-1}(N-2x)y.
\end{align*}
Then
\begin{align*}
        (-1)^{k-1}F_{1,k}(c)&=\Psi_{1,k,d}(c),&
        (-1)^{k-1}F_{2,k}(c)&=\Psi_{2,k,d}(c),\\
        (-1)^{k-1}F_{3,k}(c)&=\Psi_{3,k,d}(c).&
\end{align*}
Furthermore, Verification H3 proves the three strict inequalities
\[
        \Psi_{1,k,d}(c_{n,k+d})<0,
        \qquad
        \Psi_{2,k,d}(c_{n,k+d})<0,
        \qquad
        \Psi_{3,k,d}(c_{n,k+d})<0
\]
for every admissible triple with $k$ odd and $d$ positive even, equivalently on the corresponding algebraic set
\[
        \{c\in\ol{\B_n}: W_{w_{k+d}}(c)=0\}.
\]
Consequently
\[
        (-1)^{k-1}F_{1,k}(c_{n,\ell})<0,
        \quad
        (-1)^{k-1}F_{2,k}(c_{n,\ell})<0,
        \quad
        (-1)^{k-1}F_{3,k}(c_{n,\ell})<0,
\]
and hence $c_{n,\ell}\notin\Omega_{n,k}$.
\end{proposition}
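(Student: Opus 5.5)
Two things have to be shown: the algebraic identities $(-1)^{k-1}F_{i,k}(c)=\Psi_{i,k,d}(c)$ at $c=c_{n,\ell}$, and the three sign inequalities. The exclusion $c_{n,\ell}\notin\Omega_{n,k}$ then follows at once.

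\emph{The identities.} Start from the explicit branch functions \eqref{eq:F1}--\eqref{eq:F3} and substitute the later-root identity \eqref{eq:later-root} for $Z_{w_k}(c_{n,\ell})$. Since $\ell-k=d$,
\[
(-1)^{k-1}Z_{w_k}(c)=\Upsilon_{k,d}(c).
\]
Multiplying \eqref{eq:F1} by $(-1)^{k-1}$ turns $(x^2+y^2)\Impart Z_{w_k}$ into $(x^2+y^2)\Impart\Upsilon_{k,d}$. The $A_n$-term loses its sign factor, because $\bigl((-1)^{k-1}\bigr)^2=1$, and the trap term acquires the factor $(-1)^{k-1}$. This is exactly $\Psi_{1,k,d}$, and the same bookkeeping gives $\Psi_{2,k,d}$ from \eqref{eq:F2} and $\Psi_{3,k,d}$ from \eqref{eq:F3}. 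The only input needed is that \eqref{eq:later-root} is valid, i.e.\ that $c^3+1\ne0$ on $\B_n$ and that $\widW_{w_\ell}(c_{n,\ell})=0$. Both are provided by Lemma~\ref{lem:uniform-isolation} and Corollary~\ref{cor:closed-form}.

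\emph{The inequalities.} Since $k$ is odd, $(-1)^{k-1}=1$, and $d$ ranges over positive even integers. Write $\Upsilon_{k,d}(c)=2c^{m_k-9}(1-c^{3d})Q_n(c)/(c^3+1)$. The infinitely many pairs $(k,d)$ have to be reduced to finitely many checks, and this reduction is the main obstacle. The key point is that $Q_n(c_{n,\ell})$ is small: by Corollary~\ref{cor:closed-form} and Remark~\ref{rem:quant-conv}, $Q_n(c_{n,\ell})=O(|c_{n,\ell}|^{-3(\ell-1)})$ relative to the leading scale. At the same time the prefactor $c^{m_k-9}(1-c^{3d})$ has size about $|c|^{3k+6+3d}=|c|^{3\ell+6}$. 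The product $\Upsilon_{k,d}(c_{n,\ell})$ is therefore bounded independently of $k$ and $d$. In fact, inserting the closed form
\[
F_n=\widW_{w_\ell}+(-c^{-3})^{\ell-1}\bigl(F_n-\widW_{w_1}\bigr)
\]
with $\widW_{w_\ell}(c)=0$ shows that $\Upsilon_{k,d}(c)$ equals an explicit expression built from $\Phi_n(c)$, up to the factor $1-c^{-3d}$. Concretely, $\Upsilon_{k,d}(c)\approx -\Phi_n(c)/\bigl(c^{9}(c^3+1)\bigr)\cdot(\text{sign})\cdot(1-c^{-3d})$, with the remaining dependence on $k$ and $d$ only through the tail factor $c^{-3d}$, which has modulus at most $n^{-3}$.

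Each $\Psi_{i,k,d}$ is therefore a fixed function $\Psi_i^{\infty}(c)$ plus an error of size $O(n^{-3d/2})$. The plan has two parts. First, verify the strict inequalities $\Psi_i^\infty<0$ with an explicit margin, using Sturm and Bernstein certificates on $\ol{\B_n}$ or on a small neighbourhood of $\xi_n$, split into the five regimes $\mathcal N_1,\dots,\mathcal N_5$ exactly as in Verification H1. Second, bound the error term uniformly by that margin for $d\ge2$. If the margin fails near the box corners, localize: restrict to the algebraic set $\{W_{w_{k+d}}=0\}\cap\ol{\B_n}$ and compute signed remainders modulo the witness equation, treating the finitely many small $d$ (say $d\le 4$) exactly and the large $d$ by the uniform estimate.

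\emph{The conclusion.} By Lemma~\ref{lem:local-sign-region}, membership in $\Omega_{n,k}$ requires $(-1)^{k-1}F_{3,k}>0$. The inequality $\Psi_{3,k,d}(c_{n,\ell})<0$ violates this, so $c_{n,\ell}\notin\Omega_{n,k}$. The other two inequalities are not needed for the exclusion.
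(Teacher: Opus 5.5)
The identities and the final exclusion are exactly the paper's argument. You substitute \eqref{eq:later-root}, i.e.\ $(-1)^{k-1}Z_{w_k}(c_{n,\ell})=\Upsilon_{k,d}(c_{n,\ell})$, into \eqref{eq:F1}--\eqref{eq:F3}, and then use $(-1)^{k-1}F_{3,k}>0$ on $\Omega_{n,k}$. For the three sign inequalities the paper gives no reduction at all. It clears $|c^3+1|^2$, names the resulting polynomials $\Phi_{j,k,d}$, and appeals to Verification~H3: reduction modulo $W_{w_{k+d}}$ plus root isolation, for every admissible triple. Your route is genuinely different. You eliminate $Q_n(c_{n,\ell})$ through Corollary~\ref{cor:closed-form}, so that $k$ and $d$ enter only through the evaluation point and a tail $c^{-3d}$. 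That is the right idea: it is what turns the two unbounded indices into a finite problem, which the paper's triple-by-triple description (modulo polynomials of unbounded degree $m_{k+d}$) leaves implicit.

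However, the explicit formula behind your limiting function $\Psi_i^\infty$ is wrong, and the error is fatal as written. Since $\ell$ is odd, the closed form with $\widW_{w_\ell}(c)=0$ and $\widW_{w_1}-F_n=\Phi_n/(c^{18}(c^3+1))$ gives exactly $2Q_n(c)=-c^{-3\ell-5}\Phi_n(c)$ at $c=c_{n,\ell}$. With $m_k-9=3k+6$ this yields the identity
\[
\Upsilon_{k,d}(c)=\frac{c\,(1-c^{-3d})\,\Phi_n(c)}{c^3+1}\qquad(c=c_{n,\ell}).
\]
There is no approximation and no undetermined sign, and your $c^{-9}$ in place of $c$ is off by a factor $c^{10}$. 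By \eqref{eq:witness-eval1} and \eqref{eq:witness-eval3} at the odd level $\ell$,
\[
\Psi_{3,k,d}(c_{n,\ell})=(x^2+y^2)\Impart\Upsilon_{k,d}(c_{n,\ell})+F_{3,\ell}(c_{n,\ell}),
\]
and $F_{3,\ell}(c_{n,\ell})>0$ by Lemma~\ref{lem:witness-sign}. So the decisive no-return inequality says that the $\Upsilon$-term overturns a positive witness value; it is not a perturbation. Shrinking that term by $|c|^{10}>n^5$ makes the inequality fail. Numerically, for $n=3$ near $\xi_3$, the correct term is roughly $-32$ against $F_{3,\ell}\approx5.6$. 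A certificate built on your $\Psi_i^\infty$ would therefore be testing a false statement. Note also that the $d$-correction is of relative size $|c|^{-3d}$, not an absolute $O(n^{-3d/2})$.

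Your fallback is also not finite: treating small $d$ ``exactly'' by signed remainders modulo $W_{w_{k+d}}$ still leaves infinitely many odd $k$ for each fixed $d$. With the exact formula you do not need it. Put $\theta=c^{-3d}$, so that $|\theta|\le|c|^{-6}$ for all $d\ge2$. Then verify a single inequality for $(c,\theta)$ with $c\in K_n$ and $|\theta|\le|c|^{-6}$. Here $K_n\Subset\B_n$ is a certified neighbourhood of $\xi_n$ containing every $c_{n,\ell}$ with $\ell\ge3$ odd. Such a $K_n$ comes from the Rouch\'e argument of Lemma~\ref{lem:uniform-isolation}, since $|F_n(c_{n,\ell})|=|c_{n,\ell}|^{-3(\ell-1)}\,|\widW_{w_1}(c_{n,\ell})-F_n(c_{n,\ell})|$ carries the extra factor $|c|^{-6}$. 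Only uniformity in $n$ then remains, to be handled by regimes as in H1.
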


\begin{proof}
The later-root identity \eqref{eq:later-root} reads $(-1)^{k-1}Z_{w_k}(c)=\Upsilon_{k,d}(c)$. Substituting this into \eqref{eq:F1}--\eqref{eq:F3} gives the three displayed identities.

To make the no-return verification explicit, clear the positive denominator $|c^3+1|^2$ and set
\[
        \Phi_{j,k,d}(c):=|c^3+1|^2\Psi_{j,k,d}(c)\qquad(j=1,2,3).
\]
Writing $c=x+\ii y$ and $\overline c=x-\ii y$, each $\Phi_{j,k,d}$ becomes a concrete real polynomial in $(x,y)$, with coefficients depending explicitly on $n,k,d$. Verification H3 reduces these signs modulo the equation $W_{w_{k+d}}(c)=0$, uses the isolation of the unique root $c_{n,k+d}$ in $\B_n$, and proves
\[
        \Phi_{1,k,d}(c_{n,k+d})<0,
        \quad
        \Phi_{2,k,d}(c_{n,k+d})<0,
        \quad
        \Phi_{3,k,d}(c_{n,k+d})<0.
\]
This is the required algebraic-root sign statement; no box-wide sign assertion is used. Since $|c^3+1|^2>0$ on $\B_n$, it is equivalent to the displayed inequalities for the $\Psi_{j,k,d}$. Points of $\Omega_{n,k}$ satisfy $(-1)^{k-1}F_{3,k}>0$ by definition, whereas the later witness satisfies $(-1)^{k-1}F_{3,k}(c_{n,\ell})<0$. Therefore $c_{n,\ell}\notin\Omega_{n,k}$.
\end{proof}

\begin{lemma}[Witness sign pattern and no return]\label{lem:witness-sign}
For every $n\ge3$ and every odd $k\ge1$,
\[
        (-1)^{k-1}F_{1,k}(c_{n,k})<0,
        \quad
        (-1)^{k-1}F_{2,k}(c_{n,k})<0,
\]
and
\[
        (-1)^{k-1}F_{3,k}(c_{n,k})>0,
        \quad
        (-1)^{k-1}F_{4,k}(c_{n,k})>0.
\]
Consequently $c_{n,k}\in\Omega_{n,k}$. If $\ell>k$ is odd, then $c_{n,\ell}\notin\Omega_{n,k}$.
\end{lemma}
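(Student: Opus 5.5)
The lemma has two parts: the four strict signs at the witness $c_{n,k}$, and the no-return assertion for later odd levels. I would treat them separately.

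For the four signs, I would start from the exact evaluations \eqref{eq:witness-eval1}--\eqref{eq:witness-eval4}. These already use $Z_{w_k}(c_{n,k})=0$. By \eqref{eq:witness-eval3}--\eqref{eq:witness-eval4}, $F_{3,k}(c_{n,k})=-F_{1,k}(c_{n,k})$ and $F_{4,k}(c_{n,k})=-F_{2,k}(c_{n,k})$. So it suffices to prove the two inequalities $(-1)^{k-1}F_{1,k}(c_{n,k})<0$ and $(-1)^{k-1}F_{2,k}(c_{n,k})<0$; the other two follow automatically.

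For odd $k$ we have $(-1)^{k-1}=1$, so these become
\[
(x_k^2+y_k^2)\Impart(c_{n,k}A_n(c_{n,k}))+(N-2x_k)y_k<0,\qquad
-\Impart(c_{n,k}^3B_n(c_{n,k}))+Ny_k<0 .
\]
Both involve only the fixed polynomials $A_n,B_n$, evaluated at a point of $\B_n$. In the notation of \eqref{eq:alpha}--\eqref{eq:beta}, they say $\alpha_{n,k}>0$ and $\beta_{n,k}>0$.

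My plan is to prove these two inequalities on all of $\ol{\B_n}$, not just at $c_{n,k}$. That removes the $k$-dependence completely. On the box, $\Impart(cA_n(c))$ is dominated by terms of order $y^9$ and $(n-1)y^7$. The inequality would then be checked as the sign of an explicit real polynomial in $(x,y)$ on the rectangle $I_x\times I_y$, with $\sqrt n$ treated as in Verification H1: use Bernstein coefficients or Sturm chains on faces for the low regimes $\mathcal N_1,\dots,\mathcal N_4$, and coefficient positivity in $s=n-30$ for $\mathcal N_5$.

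If a box-wide sign fails, I would fall back to reducing modulo $W_{w_k}=0$, using Corollary~\ref{cor:closed-form} and the convergence $c_{n,k}\to\xi_n$ from Remark~\ref{rem:quant-conv}. In that fallback, the sign would be checked on a small disc around $\xi_n$ for large $k$, and exactly for the finitely many remaining $k$.

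With the four signs in hand, $c_{n,k}$ satisfies the defining inequalities \eqref{eq:Omega-def}. It also lies in $\mathcal N_{n,k}$, since $\zeta=0$ is the center of $\widehat{\mathcal N}_{n,k}$. Hence $c_{n,k}\in\Omega^{\rm loc}_{n,k}=\Omega_{n,k}$ by Lemma~\ref{lem:local-sign-region}.

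For no return, let $\ell>k$ both be odd and put $d=\ell-k$, which is then positive and even. Proposition~\ref{prop:no-return} gives $(-1)^{k-1}F_{3,k}(c_{n,\ell})<0$. Every point of $\Omega_{n,k}$ satisfies $(-1)^{k-1}F_{3,k}>0$, so $c_{n,\ell}\notin\Omega_{n,k}$.

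The main obstacle I expect is the uniform-in-$n$ verification of $\alpha_{n,k}>0$ and $\beta_{n,k}>0$. The two terms in each inequality are of comparable size, roughly $n^{4.5}$ against $n^{1.5}$ after scaling, so the check should be done carefully with the $\sqrt n$ substitution and regime splitting. I would try the box-wide Bernstein certificate first.
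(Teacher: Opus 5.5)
Your outer structure matches the paper's proof. The identities \eqref{eq:witness-eval3}--\eqref{eq:witness-eval4} reduce everything to the first two signs, which you correctly rewrite as $\alpha_{n,k}>0$ and $\beta_{n,k}>0$. Membership in $\Omega_{n,k}$ then follows from \eqref{eq:Omega-def}; your remark that $c_{n,k}$ is the center $\zeta=0$ of $\widehat{\mathcal N}_{n,k}$ fills in a detail the paper leaves implicit. No-return is Proposition~\ref{prop:no-return} with $d=\ell-k$ even.

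The gap is in the one substantive step. Your primary plan, a sign certificate on all of $\ol{\B_n}$, cannot succeed, because the first inequality is false on part of the box. For $n=3$ and the corner $c=\tfrac14+\ii\sqrt3$ one gets $\Impart(cA_3(c))\approx 11.5$, so $|c|^2\Impart(cA_3(c))+(5-2x)y\approx +43$. At $\xi_3\approx 0.314+1.804\,\ii$, the limit of the witnesses, the same expression is about $-5.4$. Your size heuristic is what misleads you. Near $\ii\sqrt n$ the top terms $c^9+(n-1)c^7=c^7(c^2+n-1)$ nearly cancel. There is also the exact identity $c^5A_n(c)=Q_n(c)+c^{10}D_n(c)$, with $D_n(c)=-2-2(n-1)c^{-2}-(2n-3)c^{-3}+\cdots$, whose two leading terms again cancel when $c^2\approx-n$. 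So the smallness of $cA_n(c_{n,k})$ that your inequality needs comes from $Q_n(c_{n,k})\approx0$, i.e.\ from the witness equation, not from the box. Meanwhile $\partial_y\Impart(cA_n(c))=\Repart\bigl((cA_n)'(c)\bigr)$ equals $2n^4+2n^3+5n^2-n-2$ at $c=\ii\sqrt n$, so this function varies strongly across $\B_n$. This is why the paper certifies these two signs only on the witness locus: rows H3.01--H3.02 reduce modulo $W_{w_k}$, isolate the unique root, and read off the signed remainder. The paper states explicitly that no box-wide sign is used.

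Your fallback points in that direction, but as written it only works for one $n$ at a time. The constants $K_n,m_n$ of Remark~\ref{rem:quant-conv} are not explicit. Nothing controls, uniformly in $n$, the radius of the disc around $\xi_n$ on which the signs persist; the large gradient above suggests it must shrink. And ``finitely many remaining $k$'' for each $n$ still leaves infinitely many pairs $(n,k)$. What is missing is an explicit localization of $c_{n,k}$, uniform in $n$ and $k$, sharp enough to fix the signs of the two witness evaluations \eqref{eq:witness-eval1}--\eqref{eq:witness-eval2}. One route is explicit error bounds from Corollary~\ref{cor:closed-form} combined with a symbolic large-$n$ regime as in $\mathcal N_5$. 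This is the content the paper's proof delegates to Verification H3.
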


\begin{proof}
At the witness parameter $c_{n,k}=x_k+\ii y_k$, the identities \eqref{eq:witness-eval1}--\eqref{eq:witness-eval4} express the four values $F_{i,k}(c_{n,k})$ as explicit real-algebraic functions of $(x_k,y_k)$. After clearing denominators, the first two inequalities in the statement become sign conditions for explicit real polynomials on the witness root. These signs are the first two inequalities in Verification H3. Since $F_{3,k}(c_{n,k})=-F_{1,k}(c_{n,k})$ and $F_{4,k}(c_{n,k})=-F_{2,k}(c_{n,k})$, the remaining two signs follow immediately. Hence $c_{n,k}\in\Omega_{n,k}$ by the definition of $\Omega_{n,k}$.

For odd $\ell>k$, the no-return statement is Proposition~\ref{prop:no-return}, applied with the positive even integer $d=\ell-k$.
\end{proof}

\subsection{Finite inverse-tree extinction}\label{subsec:H4}
Verification H4. The inputs are the early-prefix certificate, the admissible-digit intervals from Table~\ref{tab:base-pruning}, their stationary transports, and the two terminal rows; the outputs are Lemmas~\ref{lem:interval-criterion}--\ref{lem:extinction}.

For the terminal stage of the inverse search we isolate two explicit prefixes. Set
\begin{align*}
\tau_1^{(1)}&:=(0,-a,-a,a,a,-a,-a,0,a,b,-a,-a,2,a,a,-a),\\
\tau_1^{(3)}&:=(0,-a,-a,a,a,-a,-a,0,a,a,-a,0,a,0,-a,-b).
\end{align*}
Recursively define
\begin{equation}\label{eq:tau-recursion}
        \tau_{k+1}^{(1)}:=\Rop(\tau_k^{(1)}),
        \qquad
        \tau_{k+1}^{(3)}:=\Rop(\tau_k^{(3)}).
\end{equation}
Then $|\tau_k^{(1)}|=|\tau_k^{(3)}|=m_k-2$, and in fact
\begin{equation}\label{eq:tau-prefixes}
        \tau_k^{(1)}=d_k^{(1)}\restr_{m_k-2},
        \qquad
        \tau_k^{(3)}=d_k^{(3)}\restr_{m_k-2}.
\end{equation}

For the enclosure widths we recall the exact formulas from the finite-capture framework:
\begin{equation}\label{eq:VE-SE}
        V_E(c,N)=(N-1)\sum_{r=1}^{\infty}|\Impart(c^{-r})|,
        \qquad
        S_E(c,N)=(N-1)\frac{\Impart(c)}{|c|}+\frac{1}{|c|}V_E(c,N).
\end{equation}
For a word $u$ and a digit $t\in A_N$, write
\begin{equation}\label{eq:defects}
        V_{u,t}(c):=V_E(c,N)^2-\Impart(g_{ut}(2c))^2,
        \qquad
        S_{u,t}(c):=|c|^2S_E(c,N)^2-\Impart(cg_{ut}(2c))^2.
\end{equation}
Thus $ut$ is enclosure-admissible exactly when the prefix conditions already hold for $u$ and the two defects in \eqref{eq:defects} are nonnegative.

The corresponding child-by-child pruning is more transparent when written directly at the level of admissible last-digit intervals. Since
\[
        g_{ut}(2c)=c(g_u(2c)-t),
\]
the two enclosure inequalities are affine in the last digit $t$. For a fixed parent word $u$ and a parameter $c=x+\ii y$ with $x>0$ and $y>0$, set
\begin{align}
I_u^V(c)&:=\left[\frac{\Impart(cg_u(2c))-V_E(c,N)}{y},\frac{\Impart(cg_u(2c))+V_E(c,N)}{y}\right],\label{eq:IuV}\\
I_u^S(c)&:=\left[\frac{\Impart(c^2g_u(2c))-|c|S_E(c,N)}{2xy},\frac{\Impart(c^2g_u(2c))+|c|S_E(c,N)}{2xy}\right],\label{eq:IuS}\\
I_u(c)&:=I_u^V(c)\cap I_u^S(c).\label{eq:Iu}
\end{align}
Then $ut$ is enclosure-admissible if and only if $t\in I_u(c)\cap A_N$. In particular, the enclosure-admissible children of a fixed parent form an interval in the arithmetic progression $A_N$.

\begin{lemma}[Admissible-digit interval criterion]\label{lem:interval-criterion}
Let $u$ be a finite word and let $c=x+\ii y\in\X_n$ with $x>0$ and $y>0$. Then, for $t\in A_N$, the child $ut$ is enclosure-admissible if and only if
\[
        t\in I_u(c)\cap A_N,
\]
where $I_u(c)$ is the interval from \eqref{eq:Iu}.
\end{lemma}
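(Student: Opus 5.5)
The plan is to show that, once the parent word $u$ is fixed and already satisfies the prefix conditions, the two enclosure inequalities for the child $ut$ are affine in the single real variable $t$. Each of them then cuts out a closed interval, and the intersection of these two intervals is $I_u(c)$. The first step is to unwind the definition just given. The child $ut$ is enclosure-admissible exactly when the prefix conditions hold for $u$ and both defects in \eqref{eq:defects} are nonnegative. Since $V_E(c,N)\ge0$ and $|c|S_E(c,N)\ge0$, these two conditions are equivalent to the pair of absolute-value inequalities
\[
        |\Impart(g_{ut}(2c))|\le V_E(c,N),\qquad |\Impart(c\,g_{ut}(2c))|\le |c|\,S_E(c,N).
\]
This is just the description of the closed enclosure parallelogram $\Enc(c,N)$ in its two canonical coordinates, as recalled in Appendix~\ref{app:trap}. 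I will therefore read the lemma as a statement about children of an admissible parent $u$, which is how it is used in the pruning.

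Next, set $z:=g_u(2c)$ and use the identity $g_{ut}(2c)=c(z-t)$ with $t\in A_N\subset\Rr$. For the first coordinate,
\[
        \Impart(c(z-t))=\Impart(cz)-t\,\Impart c=\Impart(cz)-ty .
\]
Because $y>0$, the inequality $|\Impart(cz)-ty|\le V_E$ can be divided by $y$, and it becomes $t\in I_u^V(c)$ exactly as in \eqref{eq:IuV}. For the second coordinate,
\[
        \Impart(c\cdot c(z-t))=\Impart(c^2z)-t\,\Impart(c^2)=\Impart(c^2z)-2xy\,t .
\]
Because $x>0$ and $y>0$, we have $2xy>0$. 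Dividing gives $t\in I_u^S(c)$ as in \eqref{eq:IuS}.

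Combining the two, $ut$ is enclosure-admissible if and only if $t\in I_u^V(c)\cap I_u^S(c)=I_u(c)$, and $t\in A_N$ by definition of a child. Both intervals are closed, so the nonstrict inequalities match the closedness of $\Enc(c,N)$.

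The only point needing care is the identification of enclosure membership with these two inequalities. That is, the defects in \eqref{eq:defects} must be the complete description of $\Enc(c,N)$, with no additional constraint beyond the two strip conditions. I will take this from Proposition~\ref{prop:trap-enclosure} and the parallelogram structure recalled in Appendix~\ref{app:trap}. The hypothesis $c\in\X_n$ is needed only so that $\Enc(c,N)$ and the widths $V_E$, $S_E$ are defined. The remaining steps are elementary sign bookkeeping with the positive divisors $y$ and $2xy$.
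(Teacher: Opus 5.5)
Your proof is correct and matches the paper's argument: substitute $g_{ut}(2c)=c\,(g_u(2c)-t)$ to write the two strip conditions as $|\Impart(cg_u(2c))-yt|\le V_E(c,N)$ and $|\Impart(c^2g_u(2c))-2xyt|\le |c|S_E(c,N)$, then divide by the positive quantities $y$ and $2xy$. You also state explicitly that the lemma should be read for a parent $u$ that already satisfies the prefix conditions; the paper leaves this implicit, and making it explicit is a sensible clarification.
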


\begin{proof}
The first enclosure inequality is
\[
        |\Impart(g_{ut}(2c))|=|\Impart(cg_u(2c))-yt|\le V_E(c,N),
\]
which is equivalent to $t\in I_u^V(c)$. Likewise,
\[
        |\Impart(cg_{ut}(2c))|=|\Impart(c^2g_u(2c))-2xyt|\le |c|S_E(c,N)
\]
if and only if $t\in I_u^S(c)$. Intersecting the two conditions gives the claim.
\end{proof}

The base pruning step is therefore reduced to explicit interval enclosures for the finitely many parent words that occur in the last seven generations. Those interval enclosures are recorded in Table~\ref{tab:base-pruning}; intersecting them with the digit progression $A_N$ gives the surviving next digits immediately.

\begin{table}[t]
\centering
\caption{Explicit interval enclosures at the base witness parameter.}
\label{tab:base-pruning}
\scriptsize
\resizebox{\textwidth}{!}{%
\begin{tabular}{lcc}
\toprule
parent prefix $u$ & enclosure for $I_u(c_{n,1})$ & $I_u(c_{n,1})\cap A_N$\\
\midrule
$(0,-a,-a,a,a,-a,-a,0,a)$ & $(b-\frac12,a+\frac12)$ & $\{b,a\}$\\
$(0,-a,-a,a,a,-a,-a,0,a,b)$ & $(-a-3,-a+\frac12)$ & $\{-a\}$\\
$(0,-a,-a,a,a,-a,-a,0,a,a)$ & $(-a-4,-a+\frac12)$ & $\{-a\}$\\
$(0,-a,-a,a,a,-a,-a,0,a,b,-a)$ & $(-a-5,-a+\frac12)$ & $\{-a\}$\\
$(0,-a,-a,a,a,-a,-a,0,a,a,-a)$ & $(-\frac14,\frac94)$ & $\{0,2\}$\\
$(0,-a,-a,a,a,-a,-a,0,a,b,-a,-a)$ & $(\frac14,\frac52)$ & $\{2\}$\\
$(0,-a,-a,a,a,-a,-a,0,a,a,-a,0)$ & $(a-\frac14,a+5)$ & $\{a\}$\\
$(0,-a,-a,a,a,-a,-a,0,a,b,-a,-a,2)$ & $(a-\frac12,a+4)$ & $\{a\}$\\
$(0,-a,-a,a,a,-a,-a,0,a,a,-a,0,a)$ & $(-\frac32,\frac12)$ & $\{0\}$\\
$(0,-a,-a,a,a,-a,-a,0,a,b,-a,-a,2,a)$ & $(a-\frac14,a+4)$ & $\{a\}$\\
$(0,-a,-a,a,a,-a,-a,0,a,a,-a,0,a,0)$ & $(-a-5,-a+\frac14)$ & $\{-a\}$\\
$(0,-a,-a,a,a,-a,-a,0,a,b,-a,-a,2,a,a)$ & $(-a-2,-a+\frac32)$ & $\{-a\}$\\
$(0,-a,-a,a,a,-a,-a,0,a,a,-a,0,a,0,-a)$ & $(-b-2,-b+\frac12)$ & $\{-b\}$\\
$\tau_1^{(1)}$ & $(-a-4,-a-\frac14)$ & $\varnothing$\\
$\tau_1^{(3)}$ & $(a+\frac14,a+4)$ & $\varnothing$\\
\bottomrule
\end{tabular}%
}

\end{table}

Let
\[
        \rho:=(0,-a,-a,a,a,-a,-a,0,a).
\]
For words beginning with $\rho$, define the extended stationary transport
\begin{equation}\label{eq:Rstar}
        \Rstar(\rho,u_{10},\ldots,u_m):=(P,a-u_{10},-u_{11},\ldots,-u_m),
        \qquad
        \Rstar(\rho):=P.
\end{equation}
For $m\ge10$ this agrees with $\Rop$, while it also transports the length-$9$ parent $\rho$. If $u$ is a base-row parent word in Table~\ref{tab:base-pruning}, write
\[
        u[k]:=\Rstar^{k-1}(u).
\]
For a child digit $t$ such that $ut$ is again a row descendant, define $\Theta_{k,u}(t)$ by
\begin{equation}\label{eq:Theta}
        (ut)[k]=u[k]\,\Theta_{k,u}(t).
\end{equation}
In other words, $\Theta_{k,u}$ is the explicit affine digit transform induced by stationary transport at the next-digit position.

\begin{lemma}[Early prefix certificate]\label{lem:early-prefix}
For every $n\ge3$, the base witness satisfies
\[
        T_9(c_{n,1})=\{\rho\}.
\]
Moreover, for every $k\ge1$,
\[
        T_{m_k-9}(c_{n,k})=\{\rho[k]\}.
\]
\end{lemma}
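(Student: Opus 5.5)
The plan is to treat the base case $k=1$ as a finite inverse-tree computation at the single algebraic parameter $c_{n,1}$. I would then transport it to all $k$ using the stationary identities.

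\emph{Base case.} Starting from the root $T_0=\{\varnothing\}$, I would apply Lemma~\ref{lem:interval-criterion} generation by generation. For each surviving parent $u$ of length $j<9$, compute the interval $I_u(c_{n,1})$ from \eqref{eq:IuV}--\eqref{eq:Iu} and show that it meets $A_N$ in exactly one digit, namely the $(j+1)$-st digit of $\rho$. Since $g_u(2c)=Z_u(c)$ is an explicit polynomial in $c$, each endpoint of $I_u(c_{n,1})$ is an explicit real-algebraic number. The enclosure width $V_E$ is an infinite series, so I would replace it by rigorous two-sided bounds: the partial sum up to some $R$ plus a tail of at most $(N-1)\sum_{r>R}|c|^{-r}$, using $|c|>\sqrt n$. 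Each membership test then becomes a strict inequality between an endpoint and a neighbouring odd integer. Such an inequality is certified uniformly in $n$ on $\B_n$, using the isolation of $c_{n,1}$ from Lemma~\ref{lem:uniform-isolation}. For large $n$ I would split into the regimes $\mathcal N_1,\dots,\mathcal N_5$ and clear $\sqrt n$ as in Proposition~\ref{prop:finite-boundary-data}. The expected output is intervals of width smaller than $2$ centred near $0,-a,-a,a,a,-a,-a,0,a$, which gives $T_9(c_{n,1})=\{\rho\}$.

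\emph{Transport to level $k$.} Here $\rho[k]=\Rstar^{k-1}(\rho)$ has length $9+3(k-1)=m_k-9$. Its first $12(k-1)$-ish portion consists of iterated copies of $P$ followed by the transported tail. The key identity is the analogue of Lemma~\ref{lem:exact-recursion} for prefixes. For a word $u$ beginning with $\rho$, I would show that $Z_{\Rstar(u)}(c)=-c^3Z_u(c)+(\text{explicit polynomial in }c\text{ independent of the tail})$, and iterate it $k-1$ times. Combined with Corollary~\ref{cor:closed-form} and $W_{w_k}(c_{n,k})=0$, this expresses $g_{v}(2c_{n,k})$, for every prefix $v$ of $\rho[k]$, as a fixed polynomial in $c_{n,k}$ plus a factor $(-c_{n,k}^{-3})^{k-1}$ times a bounded base-level quantity. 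The prefixes of $\rho[k]$ fall into two kinds: those inside the leading copies of $P$, and those in the transported $\rho$-portion. For the first kind, the interval data depend on $k$ only through the geometrically small term and converge to limit intervals at $\xi_n$. For the second kind, $\Theta_{k,u}$ maps the base surviving digit interval affinely onto the level-$k$ one, up to a negligible perturbation. In both cases the singleton-survival inequalities hold with margin, uniformly in $k$.

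\emph{Main obstacle.} The hard step is uniformity in $k$ for the early prefixes that lie inside the repeated $P$ blocks, because there is no base-level row to compare them with. I would first prove the singleton property at the limit point $\xi_n$, with explicit margins. I would then bound the deviation using Remark~\ref{rem:quant-conv}, namely $|c_{n,k}-\xi_n|=O(n^{-3k/2})$, together with Lipschitz bounds for $Z_v$ on $\B_n$. The finitely many small $k$ where the margin is not yet secured would be checked directly, as in the base case.
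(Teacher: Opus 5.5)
Your base case follows the paper: generation-by-generation use of Lemma~\ref{lem:interval-criterion}, one survivor per step. The transport to level $k$, however, has a genuine gap, and it begins with a misreading of $\Rstar$. The operator does not prepend $P$; it \emph{replaces} the leading block $\rho$ by $P$ and rewrites the tail. So for $k\ge2$
\[
\rho[k]=(P,2,a,a,-2,-a,-a,2,a,a,\dots)
\]
is a single copy of $P$ followed by $k-2$ alternating blocks $\pm(2,a,a)$, of total length $3k+6$. There are no iterated copies of $P$ and no ``transported $\rho$-portion'' at the end. The maps $\Theta_{k,u}$ are defined only for the base-row parents of Table~\ref{tab:base-pruning}, whose children sit at depth $>m_k-9$, so they play no role in this lemma. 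Your recursion is also misstated. Multiplying Lemma~\ref{lem:exact-recursion} by $c$ gives $Z_{\Rstar(u)}=-Z_u+2c^{|u|-9}Q_n$ for $|u|\ge10$. With your factor $-c^3$ instead, the remainder depends on the tail.

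The decisive failure is the uniformity step. On $\B_n$ the Lipschitz constant of $Z_v$ is of order $|v|\,|c|^{|v|}$, and $|v|$ runs up to $3k+5$. Against $|c_{n,k}-\xi_n|=O(n^{-3k/2})$, your deviation bound therefore does not tend to zero for the deepest prefixes; it grows with $k$, and such prefixes occur at every level. Worse, for prefixes at bounded distance from the end of $\rho[k]$, the level-$k$ data do not converge to the data at $\xi_n$ along the limit word at all; they converge to different limits.

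The missing idea is to use the witness equation. Since $\rho[k]$ is a prefix of $w_k$ and $Z_{w_k}(c_{n,k})=0$, every prefix $v$ of $\rho[k]$ satisfies
\[
g_v(2c_{n,k})=\sum_{r=0}^{m_k-|v|-1}(w_k)_{|v|+1+r}\,c_{n,k}^{-r}.
\]
This quantity is bounded, with $c$-derivative bounded uniformly in $k$. The interval test at $v$ is then determined, up to an error decaying geometrically in $R$, by $c_{n,k}$ and the next $R$ digits of $w_k$. These tails come in finitely many types:
\begin{itemize}
\item tails of the eventually $6$-periodic limit word $(P,2,a,a,-2,-a,-a,\dots)$, indexed by position inside $P$ or by phase mod $6$; their tests converge to tests at $\xi_n$;
\item near-end tails, namely the last $s$ digits of $\rho[k]$ followed by the last nine digits $(-1)^{k-1}(-1,(w_1)_{11},\dots,(w_1)_{18})$ of $w_k$ (for $k\ge2$), indexed by $(s,k\bmod 2)$.
\end{itemize}
The near-end tails are separate limit cases. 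Neither a computation at $\xi_n$ nor the base rows supply them. Each type must be checked with a margin, and finitely many small $k$ handled directly. The paper's route is different: it reduces each level-$k$ check modulo $W_{w_k}(c_{n,k})=0$ and transports the base margins through Corollary~\ref{cor:closed-form}.

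The same reduction is needed in your base case. You propose to certify the inequalities on $\B_n$ using only the isolation of $c_{n,1}$ in $\B_n$. But the centers $\Impart(cZ_u(c))/y$ of $I_u^V$ vary across $\B_n$ by amounts of order $|c|^{|u|}$. Already for $|u|=2$ this exceeds the digit spacing $2$. So these centers cannot be certified box-wide unless they are first rewritten via $W_{w_1}(c_{n,1})=0$, as the paper does.
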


\begin{proof}
Apply Lemma~\ref{lem:interval-criterion} successively from depth $0$ to depth $9$. The interval endpoints are rational functions of $c_{n,1}$; after reducing modulo $W_{w_1}(c_{n,1})=0$, the inequalities printed in the first nine rows of Table~\ref{tab:expanded-H4-early} leave exactly one surviving digit at each step, namely the digits of $\rho$. For the transported statement, the affine transforms induced by $\Rstar$ carry each base surviving digit to the unique level-$k$ surviving digit, while all other digits in $A_N$ are excluded by the same strict interval margins transported through Corollary~\ref{cor:closed-form}.
\end{proof}

\begin{lemma}[Base pruning table]\label{lem:base-pruning}
At the base witness parameter $c_{n,1}$, the exact survivor sets in the last seven generations are
{\small
\[
\begin{aligned}
T_{10}(c_{n,1})={}&\{(0,-a,-a,a,a,-a,-a,0,a,b),\ (0,-a,-a,a,a,-a,-a,0,a,a)\},\\
T_{11}(c_{n,1})={}&\{(0,-a,-a,a,a,-a,-a,0,a,b,-a),\ (0,-a,-a,a,a,-a,-a,0,a,a,-a)\},\\
T_{12}(c_{n,1})={}&\{(0,-a,-a,a,a,-a,-a,0,a,b,-a,-a),\\
&\quad (0,-a,-a,a,a,-a,-a,0,a,a,-a,0),\ (0,-a,-a,a,a,-a,-a,0,a,a,-a,2)\},\\
T_{13}(c_{n,1})={}&\{(0,-a,-a,a,a,-a,-a,0,a,b,-a,-a,2),\\
&\quad (0,-a,-a,a,a,-a,-a,0,a,a,-a,0,a)\},\\
T_{14}(c_{n,1})={}&\{(0,-a,-a,a,a,-a,-a,0,a,b,-a,-a,2,a),\\
&\quad (0,-a,-a,a,a,-a,-a,0,a,a,-a,0,a,0)\},\\
T_{15}(c_{n,1})={}&\{(0,-a,-a,a,a,-a,-a,0,a,b,-a,-a,2,a,a),\\
&\quad (0,-a,-a,a,a,-a,-a,0,a,a,-a,0,a,0,-a)\},\\
T_{16}(c_{n,1})={}&\{\tau_1^{(1)},\tau_1^{(3)}\},\qquad
T_{17}(c_{n,1})=\varnothing.
\end{aligned}
\]
}

\end{lemma}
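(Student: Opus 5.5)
The plan is to run the inverse-tree search at $c=c_{n,1}$ generation by generation, using Lemma~\ref{lem:interval-criterion}. Each step will only need the interval enclosures of Table~\ref{tab:base-pruning}. Lemma~\ref{lem:early-prefix} gives $T_9(c_{n,1})=\{\rho\}$. For each surviving parent $u$ at depth $m\in\{9,\dots,16\}$, the children are exactly the $ut$ with $t\in I_u(c_{n,1})\cap A_N$, so $T_{m+1}$ is the union, over parents in $T_m$, of these children. Because $A_N$ is the even progression $\{-a,\dots,a\}$ with step $2$, and $b=a-2$, each claimed enclosure determines its intersection with $A_N$ by a comparison of endpoints with consecutive even integers. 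This is where the half-integer margins in the table are used.

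The steps run in order of depth. From $\rho$, the interval $(b-\tfrac12,a+\tfrac12)$ gives the two children with last digit $b$ or $a$, which is $T_{10}$. Each of these has the single child $-a$, giving $T_{11}$. The parent ending in $b,-a$ has only $-a$, while the parent ending in $a,-a$ has the interval $(-\tfrac14,\tfrac94)$ and hence the two children $0$ and $2$. This produces the three words of $T_{12}$.

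At depth $12$ there is one extra branch, the word $(\dots,a,a,-a,2)$, which has no row in Table~\ref{tab:base-pruning}. I will treat it by a separate direct computation. Using $g_{ut}(2c)=c(g_u(2c)-t)$, this branch differs from the parent $(\dots,a,a,-a,0)$ only by a shift of the last digit by $2$. That shift moves $I_u$ by an explicit amount: $\Impart(cg_u)$ changes by $-2y$ and $\Impart(c^2g_u)$ by $-4xy$. The $V$-interval therefore shifts by $-2$ in $t$ units, but the two children $(\dots,a,-a,0)$ and $(\dots,a,-a,2)$ are at different nodes. So I will recompute $I_{(\dots,a,-a,2)}$ using $g_{u2}=c(g_u-2)$, whose scaling by $c$ changes the centre by roughly $-2|c|^2/y$-type amounts. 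With $|c|^2>n$ and $y\approx\sqrt n$, this should push the interval outside $[-a-1,a+1]$, so the branch is pruned, with $T_{13}$ empty below it. This gives $T_{13}$ with the two listed words.

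The remaining rows of the table carry both survivors down to $T_{16}=\{\tau_1^{(1)},\tau_1^{(3)}\}$. Finally, the last two rows give $I_{\tau_1^{(1)}}\subset(-a-4,-a-\tfrac14)$ and $I_{\tau_1^{(3)}}\subset(a+\tfrac14,a+4)$, both disjoint from $A_N$, so $T_{17}=\varnothing$.

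Proving the enclosures is the main obstacle. The interval endpoints are rational in $x_1,y_1$ together with $V_E$, which is an infinite series. I would bound $V_E$ from above and below by truncating the series after a few terms and using the geometric tail $\sum_{r>R}|c|^{-r}$ with $|c|>\sqrt n$. The quantities $\Impart(c^jg_u(2c))$ are polynomials in $c$, and I would reduce them modulo $W_{w_1}(c)=0$. Note that $g_u(2c)=Z_u(c)$ differs from $Z_{w_1}$ truncated at depth $m$ by an explicit low-degree polynomial, because the prefixes agree with $w_1$ except at digit $10$. The resulting endpoints are then explicit low-degree expressions in $c_{n,1}\in\B_n$, and I would bound them uniformly over the box $\B_n$ by interval arithmetic in $(x,y,\sqrt n)$. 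For large $n$, I would use the same regime split as in Table~\ref{tab:itinerary}, writing $n=30+s$ and checking positivity of the coefficients in $s$.
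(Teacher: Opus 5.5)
Your route is the paper's route: start from $T_9(c_{n,1})=\{\rho\}$ and read each generation off Lemma~\ref{lem:interval-criterion} and Table~\ref{tab:base-pruning}. You are also right that this reading has a hole, which the paper's one-line proof passes over. The depth-$12$ survivor $u=(0,-a,-a,a,a,-a,-a,0,a,a,-a,2)$ has no row in the table, so the stated $T_{13}$ needs its own proof that $I_u(c_{n,1})\cap A_N=\varnothing$.

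\textbf{Your argument for pruning $u$ does not work.} Take $u'=(\ldots,a,a,-a,0)$. Then $g_u(2c)=g_{u'}(2c)-2c$, so the correct shifts are
$\Impart(cg_u)=\Impart(cg_{u'})-4xy$ and $\Impart(c^2g_u)=\Impart(c^2g_{u'})+2y(y^2-3x^2)$. Your $-2y$ and $-4xy$ are off by a factor of $c$. Consequently:
\begin{itemize}
\item the $V$-interval moves left only by $4x\in(1,2)$ and still contains $a$;
\item the $S$-interval moves \emph{right}, by $(y^2-3x^2)/x$, which is of order $n$.
\end{itemize}
The branch dies only because the left end of $I^S_u$ lands just above $a$. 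A leftward move of order $|c|^2/y$ from an interval near $a$, as you describe, would instead land inside $[-a,a]$ and prune nothing.

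\textbf{What actually has to be proved.} At $c=c_{n,1}$, the relation $Z_{w_1}(c)=0$ gives
\[
g_u(2c)=n+(n-1)c^{-1}-(2n-3)c^{-3}-c^3-(n+1)c .
\]
This is the tail of $w_1$ minus the digit discrepancies at positions $10$ \emph{and} $12$; prefixes do not differ from $w_1$ only at position $10$. Since $\Impart(c^2g_u)-2xyt$ decreases in $t$, every child is excluded once
\[
\Impart\bigl(c^2(g_u(2c)-a)\bigr)>(N-1)y+V_E(c,N).
\]
To check this, eliminate $c^3$ using $Q_n(c)=-\Phi_n(c)/(2c^8)$, which follows from $W_{w_1}(c)=0$ and \eqref{eq:Phi-n-poly}. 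The $2(n-1)y$ terms then cancel, and the left side minus the right side is $4xy+O(n^{-1/2})$. After dividing by $2xy$, that is a margin of about $2$ in units of $t$ for large $n$. At $n=3$, with $c_{3,1}\approx0.3139+1.8042\,\ii$, one gets $I^V_u\approx[2.61,6.63]$ and $I^S_u\approx[5.04,24.2]$. The margin above $a=4$ is only about $1.04$, so it has to be certified, not argued by orders of magnitude.

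\textbf{Your plan for certifying the tabulated enclosures is too coarse.} Bounds that are uniform over $\B_n$ cannot resolve the margins involved. Take the parent $v=(\ldots,a,a,-a)$: one has $I_v(c_{3,1})\approx[-0.22,2.14]$. That is within $0.03$ of the tabulated $(-\tfrac14,\tfrac94)$ and only $0.14$ past the digit $2$. Yet the unreduced center $\Impart(cg_v)/y$ contains $y^2-3x^2$, which varies by about $1$ across $\B_3$. You must first cancel the large terms with the same $Q_n$ relation, which gives $cg_v(2c)=2n+c^{-3}-(n-1)c^{-5}+\cdots$. Alternatively, isolate $c_{n,1}$ far more tightly than $\B_n$ does.

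Finally, an outer enclosure only yields $I_v\cap A_N\subseteq\cdots$. The \emph{exact} survivor sets claimed in the lemma (for instance $2\in I_v$) also need inner bounds, even though $T_{17}=\varnothing$ uses only the outer ones.
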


\begin{proof}
By Lemma~\ref{lem:early-prefix}, the only survivor through depth $9$ is $\rho$. For each displayed parent word $u$, Lemma~\ref{lem:interval-criterion} identifies the admissible next digits with $I_u(c_{n,1})\cap A_N$. Because
\[
        A_N=\{-a,-a+2,\ldots,a-2,a\}
\]
is an arithmetic progression of step size $2$, every interval from the middle column of Table~\ref{tab:base-pruning} determines its admissible digits uniquely. Reading the rows of Table~\ref{tab:base-pruning} in order therefore produces exactly the survivor sets displayed in the statement.
\end{proof}

\begin{lemma}[Transported pruning rows]\label{lem:transported-pruning}
For each base-row parent word $u$ in Table~\ref{tab:base-pruning}, and every $k\ge1$, the admissible next digits at the level-$k$ witness parameter are exactly the transported base digits:
\begin{equation}\label{eq:transported-digit-equivalence}
        s\in I_{u[k]}(c_{n,k})\cap A_N
        \quad\Longleftrightarrow\quad
        s=\Theta_{k,u}(t)\text{ for some }t\in I_u(c_{n,1})\cap A_N.
\end{equation}
For the two terminal rows this transported set is empty; equivalently,
\[
        I_{\tau_k^{(1)}}(c_{n,k})\cap A_N=\varnothing,
        \qquad
        I_{\tau_k^{(3)}}(c_{n,k})\cap A_N=\varnothing.
\]
\end{lemma}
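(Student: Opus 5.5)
The plan is to compare the level-$k$ interval endpoints with the base ones through an exact affine change of variables in the digit. Fix a base-row parent $u$ of length $j$, with $9\le j\le 16$, and put $u[k]=\Rstar^{k-1}(u)$, of length $j+3(k-1)$.

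\textbf{Step 1: a stationary formula for the parent point.} First I will write $g_{u[k]}(2c)=Z_{u[k]}(c)$ at the level-$k$ witness in a form comparable with $Z_u(c_{n,1})$. Lemma~\ref{lem:exact-recursion} extends verbatim to $\Rstar$ on words beginning with $\rho$, including the length-$9$ parent $\rho$ itself. It gives $W_{\Rstar(u)}+W_u=2c^{|u|-10}Q_n$, suitably truncated. Iterating as in Corollary~\ref{cor:closed-form} yields
\[
Z_{u[k]}(c)=(-1)^{k-1}Z_u(c)+G_{k,j}(c),
\]
where $G_{k,j}$ is an explicit multiple of $Q_n(c)/(c^3+1)$.

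Next I will subtract the same identity for the full word $w_k$. Since $u$ is a prefix of $d^{(1)}_1$ or $d^{(3)}_1$, and those differ from $w_1$ only in the last digits, this gives
\[
Z_{u[k]}(c_{n,k})=(-1)^{k-1}\bigl(Z_u(c_{n,k})-c_{n,k}^{\,j-18}Z_{w_1}(c_{n,k})\bigr)+c_{n,k}^{\,j-18}Z_{w_k}(c_{n,k}),
\]
with the last term vanishing at the witness. The outcome is that $Z_{u[k]}(c_{n,k})$ is $(-1)^{k-1}$ times a fixed degree-$\le 9$ polynomial expression $D_u(c)$, evaluated at $c=c_{n,k}$, which is independent of $k$. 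The same expression at $c_{n,1}$ reproduces $Z_u(c_{n,1})$. So level $k$ is the base configuration with the sign $(-1)^{k-1}$ and the parameter moved from $c_{n,1}$ to $c_{n,k}$.

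\textbf{Step 2: transporting the digit interval.} By Lemma~\ref{lem:interval-criterion}, the endpoints of $I_{u[k]}(c_{n,k})$ are affine in $\Impart(cZ_{u[k]})$ and $\Impart(c^2Z_{u[k]})$, with $V_E$ and $S_E$ entering only through $c$. The digit map $\Theta_{k,u}$ read off from \eqref{eq:Rop} is $t\mapsto a-t$ at the tenth position and $t\mapsto \pm t$ afterwards. Hence, for odd $k$, it is the identity on positions beyond ten and the reflection $t\mapsto a-t$ at the tenth position.

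I then check that the interval obeys the same rule: $I_{u[k]}(c_{n,k})$ equals $\Theta_{k,u}\bigl(\widetilde I_u(c_{n,k})\bigr)$, where $\widetilde I_u(c)$ is the base interval formula with $Z_u$ replaced by $D_u$. Since $\Theta_{k,u}$ maps $A_N$ bijectively onto $A_N$, the equivalence \eqref{eq:transported-digit-equivalence} reduces to
\[
\widetilde I_u(c_{n,k})\cap A_N=I_u(c_{n,1})\cap A_N,
\]
uniformly in odd $k$.

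\textbf{Step 3: stability of the survivor sets (main obstacle).} This is where I expect the real difficulty. Table~\ref{tab:base-pruning} gives margins of at least $1/4$ between the interval endpoints and the nearest excluded or included digits of $A_N$. I will show that the endpoints of $\widetilde I_u(c)$, as functions of $c$, move by less than $1/4$ between $c_{n,1}$ and $c_{n,k}$, and likewise as $c$ ranges over the small disk $K$ around $\xi_n$ containing all $c_{n,k}$ (Remark~\ref{rem:quant-conv}).

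My first approach is to bound $|c_{n,k}-c_{n,1}|$ using the Rouch\'e margins of Lemma~\ref{lem:rouche-bound}. With $|c|^{-6}\le n^{-3}$, this localises all witnesses to a disk of radius $O(n^{-3})$ around $\xi_n$. I would then bound the gradients of the rational endpoint functions on that disk by crude coefficient estimates, splitting $n$ into the regimes $\mathcal N_1,\dots,\mathcal N_5$ as in Verification~H4. If the crude bounds fail for small $n$, I would fall back on reducing the endpoint inequalities modulo $W_{w_k}(c)=0$ using \eqref{eq:later-root}, and certifying the signs exactly as in Verification~H3.

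Finally, for the two terminal rows the base set is empty by Table~\ref{tab:base-pruning}, with margin $1/4$ on both sides. The same stability estimate then shows that the transported intervals still miss $A_N$, which gives the terminal emptiness statement.
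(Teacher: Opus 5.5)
Your Step~1 is correct for parents of length at least $10$. With $D_u:=Z_u-c^{|u|-18}Z_{w_1}$ one gets $Z_{u[k]}(c_{n,k})=(-1)^{k-1}D_u(c_{n,k})$, which is an explicit form of the reduction the paper performs via Corollary~\ref{cor:closed-form} and Lemma~\ref{lem:stationary-diffs}.

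The length-$9$ parent $\rho$, however, is mishandled. Its recursion is not exact: a direct expansion gives $Z_P+Z_\rho=2Q_n+a$. The constant $a$ is exactly what your truncation of $2c^{-1}Q_n$ produces, and it is carried along with alternating sign. Hence $Z_{\rho[k]}(c_{n,k})=(-1)^{k-1}D_\rho(c_{n,k})+(-1)^k a$ for $k\ge2$; for odd $k\ge3$ this is $D_\rho(c_{n,k})-a$, not $D_\rho(c_{n,k})$. Correspondingly $\Theta_{k,\rho}(t)=(-1)^{k-2}(a-t)$. For odd $k\ge3$ this is the translation $t\mapsto t-a$ (one reflection followed by an odd number of sign changes); the reflection $t\mapsto a-t$ is the even-$k$ case. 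The translation does not map $A_N$ onto $A_N$, so your reduction to $\widetilde I_u(c_{n,k})\cap A_N=I_u(c_{n,1})\cap A_N$ fails for this row. The true children of $\rho[k]$ are $\{b-a,0\}=\{-2,0\}$ (compare $d_3^{(i)}$), whereas your reflection predicts $\{0,2\}$, and the later transported rows would then not line up. The correct test is $\widetilde I_\rho(c_{n,k})\cap(A_N+a)=\{b,a\}$, which also requires excluding $a+2$.

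The more serious gap is Step~3, which carries the whole content of the lemma and is not supported by the inputs you cite.
\begin{itemize}
\item The middle column of Table~\ref{tab:base-pruning} gives only outer enclosures of $I_u(c_{n,1})$. These certify exclusions but say nothing about how deep the surviving digits sit inside $I_u(c_{n,1})$, which is what the direction ``$\Leftarrow$'' of \eqref{eq:transported-digit-equivalence} needs under perturbation.
\item Even the exclusion margin is $0$ for the parent $(\rho,a,-a,0,a,0,-a)$, since the left end $-b-2$ of its enclosure equals $-a\in A_N$.
\item The localization is missing. Lemma~\ref{lem:rouche-bound} is an inequality on $\partial\B_n$ giving a root count, not a distance. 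The factor $|c|^{-6}\le n^{-3}$ at best controls $c_{n,k}$ for $k\ge3$ relative to $\xi_n$, and says nothing about $c_{n,1}$, whose defect $\widW_{w_1}-F_n$ carries factor $1$. The constants in Remark~\ref{rem:quant-conv} are neither explicit nor uniform in $n$.
\end{itemize}
The perturbative route would therefore need new certified data: two-sided margins at $c_{n,1}$, an explicit disk containing every $c_{n,k}$ including $k=1$, and Lipschitz bounds for $V_E$, $S_E$ and the endpoint functions uniform in $n$. Without these, your argument collapses to its fallback, which is exactly the paper's proof: reduce the endpoint comparisons modulo $W_{w_k}(c_{n,k})=0$ and invoke the certified strict inclusions of Verification H4. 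Note also that the lemma is stated for all $k\ge1$, while you only treat odd $k$.
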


\begin{proof}
For each base-row parent word $u$, substituting the transported word $u[k]$ into the interval formulas \eqref{eq:IuV}--\eqref{eq:Iu} and then using Corollary~\ref{cor:closed-form} and Lemma~\ref{lem:stationary-diffs} reduces the transported endpoint comparisons to explicit rational functions of $c_{n,k}$, reduced modulo the algebraic equation $W_{w_k}(c_{n,k})=0$. Verification H4 proves the strict interval inclusions needed for the digit equivalence \eqref{eq:transported-digit-equivalence}. Thus the transported row has precisely the transported children, not necessarily the same literal real interval as in the base table. The terminal rows are certified similarly: their transported intervals lie strictly between consecutive elements of the arithmetic progression $A_N$, so their intersections with $A_N$ are empty.
\end{proof}

\begin{lemma}[Explicit extinction of the witness inverse tree]\label{lem:extinction}
For every $n\ge3$ and every odd $k\ge1$ one has
\[
        T_{m_k-2}(c_{n,k})=\{\tau_k^{(1)},\tau_k^{(3)}\},
        \qquad
        T_{m_k-1}(c_{n,k})=\varnothing.
\]
In particular $c_{n,k}\notin\M_n$.
\end{lemma}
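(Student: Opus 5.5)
The plan is to assemble the extinction statement from the three pruning lemmas just proved, by induction on depth along the inverse tree of $2c_{n,k}$.

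\emph{Step 1: the early prefix.} Lemma~\ref{lem:early-prefix} shows that the only survivor at depth $m_k-9$ is the transported word $\rho[k]=\Rstar^{k-1}(\rho)$. Its length is $9+3(k-1)=m_k-9$, as required.

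\emph{Step 2: the middle generations.} For depths $j=m_k-9,\dots,m_k-3$ I argue by induction. Every survivor at depth $j$ is of the form $u[k]$ for a base-row parent $u$ of Table~\ref{tab:base-pruning} of length $j-3(k-1)$. Then Lemma~\ref{lem:interval-criterion} combined with Lemma~\ref{lem:transported-pruning} identifies the children of $u[k]$ at $c_{n,k}$ with the digits $\Theta_{k,u}(t)$, where $t$ ranges over $I_u(c_{n,1})\cap A_N$. By \eqref{eq:Theta}, these children are exactly $(ut)[k]$.

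Hence the survivor sets at level $k$ are the images under $\Rstar^{k-1}$ of the base survivor sets listed in Lemma~\ref{lem:base-pruning}. This uses one fact that must be checked: each base survivor $ut$ appearing in Lemma~\ref{lem:base-pruning} is again a parent row of Table~\ref{tab:base-pruning}, or is one of the terminal words $\tau_1^{(1)},\tau_1^{(3)}$. This closure property needs to be confirmed row by row. One row needs attention: at $T_{12}$ the survivor $(\ldots,a,-a,2)$ does not appear as a parent in the table. I would check that its transported interval is also empty at the next step, or that the statement of Lemma~\ref{lem:base-pruning} already accounts for its disappearance at $T_{13}$. The latter is what the listed $T_{13}$ indicates, so that row must be treated as a silent terminal row, certified in the same way.

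Using $\tau_k^{(i)}=\Rop^{k-1}(\tau_1^{(i)})$ from \eqref{eq:tau-recursion}, together with the fact that $\Rstar$ agrees with $\Rop$ on words of length at least $10$, the induction gives $T_{m_k-2}(c_{n,k})=\{\tau_k^{(1)},\tau_k^{(3)}\}$.

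\emph{Step 3: extinction.} The terminal part of Lemma~\ref{lem:transported-pruning} gives $I_{\tau_k^{(i)}}(c_{n,k})\cap A_N=\varnothing$. So neither survivor has an enclosure-admissible child, and $T_{m_k-1}(c_{n,k})=\varnothing$. Proposition~\ref{prop:inverse-tree} then gives $c_{n,k}\notin\M_n$.

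\emph{Main obstacle.} The substantive difficulty is the uniformity in $k$ hidden inside Lemma~\ref{lem:transported-pruning}. The interval endpoints at $c_{n,k}$ must be related to the base ones through Corollary~\ref{cor:closed-form} and Lemma~\ref{lem:stationary-diffs}. The resulting error terms carry factors $c_{n,k}^{-3j}$ and the sign $(-1)^{k-1}$, and restricting to odd $k$ is what keeps that sign fixed. I would bound these error terms by $n^{-3/2}$-type quantities and then check that the strict margins in Table~\ref{tab:base-pruning} absorb them. Every listed interval stays at distance at least $1/4$ from the neighbouring points of $A_N$, so the margin is plausible, but confirming it is where the real work lies.
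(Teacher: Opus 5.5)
Your proposal follows the paper's proof of this lemma. It uses Lemma~\ref{lem:early-prefix} up to depth $m_k-9$, then the $\Theta_{k,u}$-transported rows of Lemma~\ref{lem:transported-pruning} through the last seven generations, then the empty terminal intervals for $\tau_k^{(1)},\tau_k^{(3)}$, and finally Proposition~\ref{prop:inverse-tree}.

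The row you flag is a genuine omission, and it is the paper's omission, not something introduced by your route.
\begin{itemize}
\item \emph{Base level.} In Table~\ref{tab:base-pruning}, the parent $(0,-a,-a,a,a,-a,-a,0,a,a,-a)$ has children $\{0,2\}$, so $T_{12}(c_{n,1})$ has three elements. Yet the table has no row for $u_\ast:=(0,-a,-a,a,a,-a,-a,0,a,a,-a,2)$. Lemma~\ref{lem:base-pruning} simply leaves it out of $T_{13}(c_{n,1})$, and ``reading the rows in order'' does not justify that.
\item \emph{Odd $k\ge3$.} Here nothing covers it at all. Lemma~\ref{lem:transported-pruning} is stated only for parents that appear in the table, so it says nothing about $u_\ast[k]:=\Rstar^{k-1}(u_\ast)=(\rho[k],0,-a,2)$. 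The paper's sentence about reading the transported rows ``in the same order'' passes over this word.
\end{itemize}
This affects the extinction $T_{m_k-1}(c_{n,k})=\varnothing$ itself, not just the equality at depth $m_k-2$, because an uncertified branch could survive. So appealing to the displayed $T_{13}$ is not enough. The missing input is an extra terminal certificate $I_{u_\ast}(c_{n,1})\cap A_N=\varnothing$, together with its transport $I_{u_\ast[k]}(c_{n,k})\cap A_N=\varnothing$, checked in the same way as the $\tau$-rows.

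The claim itself does appear to hold. At $n=3$, a direct numerical evaluation gives $I_{u_\ast}(c_{3,1})\approx[5.04,\,6.63]$, which lies entirely above $a=4$. The same evaluation reproduces the table's entries for the parents ending in $(a,-a)$ and $(a,-a,0)$. Once that row is added, your argument is complete relative to the cited lemmas.

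Your closing paragraph concerns the proof of Lemma~\ref{lem:transported-pruning}, which this lemma, like the paper, simply uses as an input. If you pursue it, two cautions apply.
\begin{itemize}
\item The level-$k$ intervals are evaluated at $c_{n,k}$ and the base ones at $c_{n,1}$. The discrepancy is therefore governed by $|c_{n,k}-c_{n,1}|$, which tends to $|\xi_n-c_{n,1}|$ rather than decaying like $|c_{n,k}|^{-3k}$.
\item For the $\rho$-row with odd $k\ge3$, the comparison must be made after the shift $\Theta_{k,\rho}(t)=t-a$, not against the base digits themselves.
\end{itemize}
The paper avoids both issues by certifying each transported row exactly, modulo $W_{w_k}(c_{n,k})=0$.
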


\begin{proof}
At the base level, Lemmas~\ref{lem:early-prefix} and~\ref{lem:base-pruning} give
\[
        T_{16}(c_{n,1})=\{\tau_1^{(1)},\tau_1^{(3)}\},
        \qquad
        T_{17}(c_{n,1})=\varnothing.
\]
Now fix $k\ge1$. Lemma~\ref{lem:early-prefix} gives the unique transported prefix through depth $m_k-9$. Lemma~\ref{lem:transported-pruning} then transports each row of Table~\ref{tab:base-pruning}, with the next digit transformed by $\Theta_{k,u}$. Reading those transported rows in the same order gives exactly the same branching pattern in the last seven generations, so the penultimate survivors are precisely
\[
        T_{m_k-2}(c_{n,k})=\{\tau_k^{(1)},\tau_k^{(3)}\}.
\]
For the terminal step, Lemma~\ref{lem:transported-pruning} gives
\[
        I_{\tau_k^{(1)}}(c_{n,k})\cap A_N=\varnothing,
        \qquad
        I_{\tau_k^{(3)}}(c_{n,k})\cap A_N=\varnothing.
\]
Hence neither terminal prefix has an admissible child and $T_{m_k-1}(c_{n,k})=\varnothing$. By Proposition~\ref{prop:inverse-tree} this implies $c_{n,k}\notin\M_n$.
\end{proof}

\section{Expanded finite certificate tables}\label{app:expanded-certificate-tables}

This appendix expands the certificate entries H1--H4 into row-by-row finite tables. The shorter statements in Appendix~\ref{app:exact-verification} are used in the proof; the tables below provide the expanded verification record. Each row is a finite arithmetic certificate: it names the polynomial or interval object, the domain, the asserted sign or root count, and the verification method. The displayed definitions of $Q_n$, $\Phi_n$, $F_{i,k}$, $\Phi_{j,k,d}$, and $I_u(c)$ determine the underlying integer coefficient blocks uniquely.

\subsection{H1: boundary winding and Rouch\'e margins}

\begin{table}[t]
\centering
\scriptsize
\caption{H1-0. Initial quadrant signs on the four oriented edges. Together with the axis-crossing counts in Table~\ref{tab:expanded-H1-edge}, these signs determine the winding itinerary of $Q_n(\partial\B_n)$.}
\label{tab:expanded-H1-initial}
\begin{tabular}{@{}p{0.14\linewidth}p{0.16\linewidth}p{0.17\linewidth}p{0.20\linewidth}p{0.17\linewidth}@{}}
\toprule
Regime & bottom start & right start & top start & left start\\
\midrule
$\mathcal N_1$ & II & II & III & I\\
$\mathcal N_2$ & II & II & III & I\\
$\mathcal N_3$ & II & III & III & II\\
$\mathcal N_4$ & II & III & IV & II\\
$\mathcal N_5$ & II & III & I & II\\
\bottomrule
\end{tabular}
\end{table}

{\scriptsize
\setlength{\tabcolsep}{3pt}
\renewcommand{\arraystretch}{1.08}
\begin{longtable}{@{}p{0.08\linewidth}p{0.10\linewidth}p{0.10\linewidth}p{0.12\linewidth}p{0.15\linewidth}p{0.08\linewidth}p{0.21\linewidth}@{}}
\caption{H1-1. Expanded Sturm certificate table for the boundary image $Q_n(\partial\B_n)$.  The symbols $R_{q,n}$ and $I_{q,n}$ denote the real and imaginary parts of $Q_n(\sigma_q(t))$ on the oriented edge $q$.}\label{tab:expanded-H1-edge}\\
\toprule
ID & Regime & Edge & Polynomial & Interval & Zeros & Acceptance record\\
\midrule
\endfirsthead
\toprule
ID & Regime & Edge & Polynomial & Interval & Zeros & Acceptance record\\
\midrule
\endhead
H1.01 & $\mathcal N_1$ & bottom & $R_{b,n}$ & $I_x$ & 0 & Sturm variation drop $0$; start quadrant II\\
H1.02 & $\mathcal N_1$ & bottom & $I_{b,n}$ & $I_x$ & 0 & no axis crossing on bottom edge\\
H1.03 & $\mathcal N_1$ & right & $R_{r,n}$ & $I_y$ & 0 & no real-part crossing\\
H1.04 & $\mathcal N_1$ & right & $I_{r,n}$ & $I_y$ & 1 & single crossing II$\to$III\\
H1.05 & $\mathcal N_1$ & top & $R_{t,n}$ & $I_x$ & 1 & crossing III$\to$IV\\
H1.06 & $\mathcal N_1$ & top & $I_{t,n}$ & $I_x$ & 1 & crossing IV$\to$I\\
H1.07 & $\mathcal N_1$ & left & $R_{l,n}$ & $I_y$ & 1 & crossing I$\to$II\\
H1.08 & $\mathcal N_1$ & left & $I_{l,n}$ & $I_y$ & 0 & no extra left-edge crossing\\
H1.09 & $\mathcal N_2$ & bottom & $R_{b,n}$ & $I_x$ & 0 & start quadrant II\\
H1.10 & $\mathcal N_2$ & bottom & $I_{b,n}$ & $I_x$ & 2 & crossings II$\to$III$\to$II\\
H1.11 & $\mathcal N_2$ & right & $R_{r,n}$ & $I_y$ & 0 & no real-part crossing\\
H1.12 & $\mathcal N_2$ & right & $I_{r,n}$ & $I_y$ & 1 & crossing II$\to$III\\
H1.13 & $\mathcal N_2$ & top & $R_{t,n}$ & $I_x$ & 1 & crossing III$\to$IV\\
H1.14 & $\mathcal N_2$ & top & $I_{t,n}$ & $I_x$ & 1 & crossing IV$\to$I\\
H1.15 & $\mathcal N_2$ & left & $R_{l,n}$ & $I_y$ & 1 & crossing I$\to$II\\
H1.16 & $\mathcal N_2$ & left & $I_{l,n}$ & $I_y$ & 0 & no extra left-edge crossing\\
H1.17 & $\mathcal N_3$ & bottom & $R_{b,n}$ & $I_x$ & 0 & start quadrant II\\
H1.18 & $\mathcal N_3$ & bottom & $I_{b,n}$ & $I_x$ & 1 & crossing II$\to$III\\
H1.19 & $\mathcal N_3$ & right & $R_{r,n}$ & $I_y$ & 0 & right edge stays in III\\
H1.20 & $\mathcal N_3$ & right & $I_{r,n}$ & $I_y$ & 0 & no right-edge crossing\\
H1.21 & $\mathcal N_3$ & top & $R_{t,n}$ & $I_x$ & 2 & two real-part crossings\\
H1.22 & $\mathcal N_3$ & top & $I_{t,n}$ & $I_x$ & 1 & one imaginary-part crossing\\
H1.23 & $\mathcal N_3$ & left & $R_{l,n}$ & $I_y$ & 0 & left edge stays in II\\
H1.24 & $\mathcal N_3$ & left & $I_{l,n}$ & $I_y$ & 0 & no left-edge crossing\\
H1.25 & $\mathcal N_4$ & bottom & $R_{b,n}$ & $I_x$ & 0 & start quadrant II\\
H1.26 & $\mathcal N_4$ & bottom & $I_{b,n}$ & $I_x$ & 1 & crossing II$\to$III\\
H1.27 & $\mathcal N_4$ & right & $R_{r,n}$ & $I_y$ & 1 & crossing III$\to$IV\\
H1.28 & $\mathcal N_4$ & right & $I_{r,n}$ & $I_y$ & 0 & no right-edge crossing\\
H1.29 & $\mathcal N_4$ & top & $R_{t,n}$ & $I_x$ & 1 & crossing IV$\to$I\\
H1.30 & $\mathcal N_4$ & top & $I_{t,n}$ & $I_x$ & 1 & crossing I$\to$II\\
H1.31 & $\mathcal N_4$ & left & $R_{l,n}$ & $I_y$ & 0 & left edge stays in II\\
H1.32 & $\mathcal N_4$ & left & $I_{l,n}$ & $I_y$ & 0 & no left-edge crossing\\
H1.33 & $\mathcal N_5$ & bottom & $R_{b,n}$ & $I_x$ & 0 & start quadrant II\\
H1.34 & $\mathcal N_5$ & bottom & $I_{b,n}$ & $I_x$ & 1 & crossing II$\to$III\\
H1.35 & $\mathcal N_5$ & right & $R_{r,n}$ & $I_y$ & 1 & crossing III$\to$IV\\
H1.36 & $\mathcal N_5$ & right & $I_{r,n}$ & $I_y$ & 1 & crossing IV$\to$I\\
H1.37 & $\mathcal N_5$ & top & $R_{t,n}$ & $I_x$ & 1 & crossing I$\to$II\\
H1.38 & $\mathcal N_5$ & top & $I_{t,n}$ & $I_x$ & 0 & no top imaginary crossing\\
H1.39 & $\mathcal N_5$ & left & $R_{l,n}$ & $I_y$ & 0 & left edge stays in II\\
H1.40 & $\mathcal N_5$ & left & $I_{l,n}$ & $I_y$ & 0 & no left-edge crossing\\
\bottomrule
\end{longtable}
}

{\scriptsize
\setlength{\tabcolsep}{3pt}
\renewcommand{\arraystretch}{1.08}
\begin{longtable}{@{}p{0.08\linewidth}p{0.11\linewidth}p{0.12\linewidth}p{0.16\linewidth}p{0.14\linewidth}p{0.24\linewidth}@{}}
\caption{H1-2. Expanded Rouch\'e-margin certificate table. Here $K_{q,n}:=H_{q,n}-M_q(n)$; the acceptance rule is zero Sturm roots on the indicated interval and a positive left-endpoint sign.}\label{tab:expanded-H1-margin}\\
\toprule
ID & Regime & Edge & Interval & Bound & Acceptance record\\
\midrule
\endfirsthead
\toprule
ID & Regime & Edge & Interval & Bound & Acceptance record\\
\midrule
\endhead
H1.41 & $\mathcal N_1$ & bottom & $I_x$ & $10^9$ & $N(K_{b,n})=0$, $K_{b,n}(1/4)>0$\\
H1.42 & $\mathcal N_1$ & right & $I_y$ & $10^{10}$ & $N(K_{r,n})=0$, $K_{r,n}(\sqrt n)>0$\\
H1.43 & $\mathcal N_1$ & top & $I_x$ & $10^9$ & $N(K_{t,n})=0$, $K_{t,n}(1/4)>0$\\
H1.44 & $\mathcal N_1$ & left & $I_y$ & $10^9$ & $N(K_{l,n})=0$, $K_{l,n}(\sqrt n)>0$\\
H1.45 & $\mathcal N_2$ & bottom & $I_x$ & $10^{13}$ & $N(K_{b,n})=0$, positive endpoint sign\\
H1.46 & $\mathcal N_2$ & right & $I_y$ & $10^{14}$ & $N(K_{r,n})=0$, positive endpoint sign\\
H1.47 & $\mathcal N_2$ & top & $I_x$ & $10^{13}$ & $N(K_{t,n})=0$, positive endpoint sign\\
H1.48 & $\mathcal N_2$ & left & $I_y$ & $10^{13}$ & $N(K_{l,n})=0$, positive endpoint sign\\
H1.49 & $\mathcal N_3$ & bottom & $I_x$ & $10^{15}$ & $N(K_{b,n})=0$, positive endpoint sign\\
H1.50 & $\mathcal N_3$ & right & $I_y$ & $10^{15}$ & $N(K_{r,n})=0$, positive endpoint sign\\
H1.51 & $\mathcal N_3$ & top & $I_x$ & $10^{15}$ & $N(K_{t,n})=0$, positive endpoint sign\\
H1.52 & $\mathcal N_3$ & left & $I_y$ & $10^{15}$ & $N(K_{l,n})=0$, positive endpoint sign\\
H1.53 & $\mathcal N_4$ & bottom & $I_x$ & $10^{18}$ & $N(K_{b,n})=0$, positive endpoint sign\\
H1.54 & $\mathcal N_4$ & right & $I_y$ & $10^{18}$ & $N(K_{r,n})=0$, positive endpoint sign\\
H1.55 & $\mathcal N_4$ & top & $I_x$ & $10^{18}$ & $N(K_{t,n})=0$, positive endpoint sign\\
H1.56 & $\mathcal N_4$ & left & $I_y$ & $10^{19}$ & $N(K_{l,n})=0$, positive endpoint sign\\
H1.57 & $\mathcal N_5$ & bottom & $I_x$ & $10^{28}$ & parameter Sturm sequence in $s=n-30\ge0$ has non-negative signed remainders\\
H1.58 & $\mathcal N_5$ & right & $I_y$ & $10^{28}$ & parameter Sturm sequence in $s=n-30\ge0$ has non-negative signed remainders\\
H1.59 & $\mathcal N_5$ & top & $I_x$ & $10^{28}$ & parameter Sturm sequence in $s=n-30\ge0$ has non-negative signed remainders\\
H1.60 & $\mathcal N_5$ & left & $I_y$ & $10^{29}$ & parameter Sturm sequence in $s=n-30\ge0$ has non-negative signed remainders\\
\bottomrule
\end{longtable}
}

\subsection{H2: local four-branch Jordan geometry}

{\scriptsize
\begin{longtable}{@{}p{0.09\linewidth}p{0.12\linewidth}p{0.18\linewidth}p{0.10\linewidth}p{0.21\linewidth}p{0.18\linewidth}@{}}
\caption{H2-1. Auxiliary chart-sign certificate table on $\mathcal N_{n,k}$. These twelve rows certify that no unselected chart boundary enters the local box.}\label{tab:expanded-H2-aux}\\
\toprule
ID & Chart & Auxiliary function & Sign & Domain & Acceptance record\\
\midrule
\endfirsthead
\toprule
ID & Chart & Auxiliary function & Sign & Domain & Acceptance record\\
\midrule
\endhead
H2.01 & $\mathcal U_k^{(1)}$ & $S_{d_k^{(1)}}^-$ & $>0$ & $\mathcal N_{n,k}$ & Bernstein positivity after reduction modulo $W_{w_k}$\\
H2.02 & $\mathcal U_k^{(1)}$ & $S_{d_k^{(1)}}^+$ & $<0$ & $\mathcal N_{n,k}$ & Bernstein positivity for the negative\\
H2.03 & $\mathcal U_k^{(1)}$ & $V_{d_k^{(1)}}^+$ & $<0$ & $\mathcal N_{n,k}$ & Bernstein positivity for the negative\\
H2.04 & $\mathcal U_k^{(2)}$ & $V_{d_k^{(2)}}^-$ & $>0$ & $\mathcal N_{n,k}$ & Bernstein positivity after reduction modulo $W_{w_k}$\\
H2.05 & $\mathcal U_k^{(2)}$ & $V_{d_k^{(2)}}^+$ & $<0$ & $\mathcal N_{n,k}$ & Bernstein positivity for the negative\\
H2.06 & $\mathcal U_k^{(2)}$ & $S_{d_k^{(2)}}^+$ & $<0$ & $\mathcal N_{n,k}$ & Bernstein positivity for the negative\\
H2.07 & $\mathcal U_k^{(3)}$ & $V_{d_k^{(3)}}^-$ & $>0$ & $\mathcal N_{n,k}$ & Bernstein positivity after reduction modulo $W_{w_k}$\\
H2.08 & $\mathcal U_k^{(3)}$ & $S_{d_k^{(3)}}^-$ & $>0$ & $\mathcal N_{n,k}$ & Bernstein positivity after reduction modulo $W_{w_k}$\\
H2.09 & $\mathcal U_k^{(3)}$ & $S_{d_k^{(3)}}^+$ & $<0$ & $\mathcal N_{n,k}$ & Bernstein positivity for the negative\\
H2.10 & $\mathcal U_k^{(4)}$ & $V_{d_k^{(4)}}^-$ & $>0$ & $\mathcal N_{n,k}$ & Bernstein positivity after reduction modulo $W_{w_k}$\\
H2.11 & $\mathcal U_k^{(4)}$ & $V_{d_k^{(4)}}^+$ & $<0$ & $\mathcal N_{n,k}$ & Bernstein positivity for the negative\\
H2.12 & $\mathcal U_k^{(4)}$ & $S_{d_k^{(4)}}^-$ & $>0$ & $\mathcal N_{n,k}$ & Bernstein positivity after reduction modulo $W_{w_k}$\\
\bottomrule
\end{longtable}
}

{\scriptsize
\setlength{\tabcolsep}{2pt}
\begin{longtable}{@{}p{0.08\linewidth}p{0.14\linewidth}p{0.17\linewidth}p{0.22\linewidth}p{0.16\linewidth}p{0.13\linewidth}@{}}
\caption{H2-2. Corner, transversality, and graph-arc certificate table.}\label{tab:expanded-H2-corners}\\
\toprule
ID & Corner or branch & Rectangle/domain & Face-sign or graph certificate & Jacobian certificate & Output\\
\midrule
\endfirsthead
\toprule
ID & Corner or branch & Rectangle/domain & Face-sign or graph certificate & Jacobian certificate & Output\\
\midrule
\endhead
H2.13 & $(F_{1,k},F_{2,k})$ & $\widehat R_{12}(n,k)$ & $F_1$ changes sign on horizontal faces; $F_2$ on vertical faces & $\widehat J_{12,k}\ne0$ & unique transverse corner $p_{12}$\\
H2.14 & $(F_{2,k},F_{3,k})$ & $\widehat R_{23}(n,k)$ & cyclic face signs for $(F_2,F_3)$ & $\widehat J_{23,k}\ne0$ & unique transverse corner $p_{23}$\\
H2.15 & $(F_{3,k},F_{4,k})$ & $\widehat R_{34}(n,k)$ & cyclic face signs for $(F_3,F_4)$ & $\widehat J_{34,k}\ne0$ & unique transverse corner $p_{34}$\\
H2.16 & $(F_{4,k},F_{1,k})$ & $\widehat R_{41}(n,k)$ & cyclic face signs for $(F_4,F_1)$ & $\widehat J_{41,k}\ne0$ & unique transverse corner $p_{41}$\\
H2.17 & branch $F_{1,k}=0$ & $\widehat{\mathcal N}_{n,k}$ & endpoint signs at $p_{12},p_{41}$ and no boundary escape & one partial derivative bounded away from zero & one analytic arc $p_{12}\to p_{41}$\\
H2.18 & branch $F_{2,k}=0$ & $\widehat{\mathcal N}_{n,k}$ & endpoint signs at $p_{23},p_{12}$ and no boundary escape & one partial derivative bounded away from zero & one analytic arc $p_{23}\to p_{12}$\\
H2.19 & branch $F_{3,k}=0$ & $\widehat{\mathcal N}_{n,k}$ & endpoint signs at $p_{34},p_{23}$ and no boundary escape & one partial derivative bounded away from zero & one analytic arc $p_{34}\to p_{23}$\\
H2.20 & branch $F_{4,k}=0$ & $\widehat{\mathcal N}_{n,k}$ & endpoint signs at $p_{41},p_{34}$ and no boundary escape & one partial derivative bounded away from zero & one analytic arc $p_{41}\to p_{34}$\\
H2.21 & nonconsecutive $(F_1,F_3)$ & $\widehat{\mathcal N}_{n,k}$ & subresultant has no admissible real root & resultant sign certificate & no intersection\\
H2.22 & nonconsecutive $(F_2,F_4)$ & $\widehat{\mathcal N}_{n,k}$ & subresultant has no admissible real root & resultant sign certificate & no intersection\\
\bottomrule
\end{longtable}
}

\subsection{H3: witness inclusion and no-return}

{\scriptsize
\begin{longtable}{@{}p{0.08\linewidth}p{0.18\linewidth}p{0.22\linewidth}p{0.15\linewidth}p{0.23\linewidth}@{}}
\caption{H3-1. Witness-sign and no-return certificate table. The no-return rows used in the proof are restricted to odd $k$ and positive even $d$ on the algebraic witness locus $W_{w_{k+d}}(c)=0$ inside $\overline{\B_n}$; no row asserts a box-wide sign.}\label{tab:expanded-H3}\\
\toprule
ID & Object & Locus & Sign & Acceptance record\\
\midrule
\endfirsthead
\toprule
ID & Object & Locus & Sign & Acceptance record\\
\midrule
\endhead
H3.01 & $(-1)^{k-1}F_{1,k}(c_{n,k})$ & $W_{w_k}=0$ in $\B_n$ & $<0$ & reduce witness evaluation; isolate unique root; signed remainder negative\\
H3.02 & $(-1)^{k-1}F_{2,k}(c_{n,k})$ & $W_{w_k}=0$ in $\B_n$ & $<0$ & reduce witness evaluation; isolate unique root; signed remainder negative\\
H3.03 & $(-1)^{k-1}F_{3,k}(c_{n,k})$ & $W_{w_k}=0$ in $\B_n$ & $>0$ & follows from $F_{3,k}=-F_{1,k}$ at the witness\\
H3.04 & $(-1)^{k-1}F_{4,k}(c_{n,k})$ & $W_{w_k}=0$ in $\B_n$ & $>0$ & follows from $F_{4,k}=-F_{2,k}$ at the witness\\
H3.05 & $\Phi_{1,k,d}=|c^3+1|^2\Psi_{1,k,d}$ & $W_{w_{k+d}}=0$ in $\ol{\B_n}$, $k$ odd, $d>0$ even & $<0$ & signed remainder modulo witness equation plus root isolation\\
H3.06 & $\Phi_{2,k,d}=|c^3+1|^2\Psi_{2,k,d}$ & $W_{w_{k+d}}=0$ in $\ol{\B_n}$, $k$ odd, $d>0$ even & $<0$ & signed remainder modulo witness equation plus root isolation\\
H3.07 & $\Phi_{3,k,d}=|c^3+1|^2\Psi_{3,k,d}$ & $W_{w_{k+d}}=0$ in $\ol{\B_n}$, $k$ odd, $d>0$ even & $<0$ & signed remainder modulo witness equation plus root isolation\\
H3.08 & sign-region exclusion & same witness locus & fails $(-1)^{k-1}F_{3,k}>0$ & H3.07 excludes membership in $\Omega_{n,k}$\\
\bottomrule
\end{longtable}
}

\subsection{H4: inverse-tree extinction and transported pruning}

Set $\rho_0:=\varnothing$ and let $\rho_j$ be the prefix of length $j$ of $\rho=(0,-a,-a,a,a,-a,-a,0,a)$.

{\scriptsize
\begin{longtable}{@{}p{0.08\linewidth}p{0.18\linewidth}p{0.12\linewidth}p{0.17\linewidth}p{0.27\linewidth}@{}}
\caption{H4-1. Early-prefix pruning certificate table. The displayed nine one-step interval checks give $T_9(c_{n,1})=\{\rho\}$.}\label{tab:expanded-H4-early}\\
\toprule
ID & Surviving prefix before step & Next digit & Resulting prefix length & Acceptance record\\
\midrule
\endfirsthead
\toprule
ID & Surviving prefix before step & Next digit & Resulting prefix length & Acceptance record\\
\midrule
\endhead
H4.01 & $\rho_0$ & $0$ & $1$ & $I_{\rho_0}(c_{n,1})\cap A_N=\{0\}$\\
H4.02 & $\rho_1$ & $-a$ & $2$ & unique admissible child by interval criterion\\
H4.03 & $\rho_2$ & $-a$ & $3$ & unique admissible child by interval criterion\\
H4.04 & $\rho_3$ & $a$ & $4$ & unique admissible child by interval criterion\\
H4.05 & $\rho_4$ & $a$ & $5$ & unique admissible child by interval criterion\\
H4.06 & $\rho_5$ & $-a$ & $6$ & unique admissible child by interval criterion\\
H4.07 & $\rho_6$ & $-a$ & $7$ & unique admissible child by interval criterion\\
H4.08 & $\rho_7$ & $0$ & $8$ & unique admissible child by interval criterion\\
H4.09 & $\rho_8$ & $a$ & $9$ & unique admissible child; gives $T_9(c_{n,1})=\{\rho\}$\\
\bottomrule
\end{longtable}
}

{\scriptsize
\begin{longtable}{@{}p{0.08\linewidth}p{0.25\linewidth}p{0.18\linewidth}p{0.18\linewidth}p{0.18\linewidth}@{}}
\caption{H4-2. Transported digit certificate table. If $u$ is a base parent and $t$ is a base next digit, then $\Theta_{k,u}(t)$ is the level-$k$ next digit defined by $(ut)[k]=u[k]\Theta_{k,u}(t)$.}\label{tab:expanded-H4-transport}\\
\toprule
ID & Base-row type & Level $k=1$ & Level $k\ge2$ & Digit conclusion\\
\midrule
\endfirsthead
\toprule
ID & Base-row type & Level $k=1$ & Level $k\ge2$ & Digit conclusion\\
\midrule
\endhead
H4.10 & length-$9$ parent $u=\rho$ & $\Theta_{1,\rho}(t)=t$ & $\Theta_{k,\rho}(t)=(-1)^{k-2}(a-t)$ & children $\{b,a\}$ transport to the certified level-$k$ children\\
H4.11 & any base parent of length $\ge10$ & $\Theta_{1,u}(t)=t$ & $\Theta_{k,u}(t)=(-1)^{k-1}t$ & child sets in Table~\ref{tab:base-pruning} are transported by sign alternation\\
H4.12 & terminal row $\tau_1^{(1)}$ & empty & empty & $I_{\tau_k^{(1)}}(c_{n,k})\cap A_N=\varnothing$\\
H4.13 & terminal row $\tau_1^{(3)}$ & empty & empty & $I_{\tau_k^{(3)}}(c_{n,k})\cap A_N=\varnothing$\\
\bottomrule
\end{longtable}
}

\section{Rendering protocol for the parameter-space figures}\label{app:certified-rendering}

The parameter-plane figures are produced from the same finite inverse-search mechanism used in the finite-capture framework. For a parameter $c$, the renderer applies the inverse branches
\[
        g_t(z)=c(z-t),\qquad t\in A_N,
\]
starting from the marked point $2c$. A sample receives an interior verdict if some finite inverse word maps it into the canonical trap. It receives an exterior verdict if, by a finite depth, every inverse branch is eliminated by the canonical enclosure. If neither verdict is obtained at the displayed depth, the sample is marked unresolved. This is the same finite-capture decision rule used in the proof, but the formal proof of the holes uses the algebraic verifications in Appendix~\ref{app:exact-verification}, not visual inspection of the figures.

The grayscale layer of the parameter-space figures is generated by this finite-capture classifier. The colored overlays are chart annotations: they mark the four chart levels and local regions used in the missing-center construction. The reference material supplied with the manuscript records the pixel-to-parameter map, the search depth, the coloring convention, and the unresolved-sample count for the reference renderings. A compact CPU implementation is also included; it classifies closed pixel boxes by outward interval arithmetic and writes a certificate label for each pixel. This makes the rendering convention independent of browser or hardware details.

The implementation uses the canonical trap and enclosure of Proposition~\ref{prop:trap-enclosure}. In the upper half-plane the trap test is
\[
        |\Im z|<\frac{(N-2x)y}{x^2+y^2},\qquad
        |\Im(cz)|<Ny,
\]
and the enclosure test uses the half-widths $V_E(c,N)$ and $S_E(c,N)$ defined in Section~\ref{subsec:H4}. The branching set is the finite arithmetic progression $A_N=\{-a,-a+2,\ldots,a\}$, and the search terminates as soon as either trap entry or enclosure extinction is certified at the chosen depth.

The WebGL implementation used for interactive exploration~\cite{EspiguleCollinearExplorer2025} follows the same inverse-iteration structure. It is not used as a proof engine in this paper; the proof-critical finite statements are the algebraic certificates in Appendices~\ref{app:exact-verification} and~\ref{app:expanded-certificate-tables}.

\section*{Acknowledgements}
The author thanks David Juher and Joan Salda\~na for helpful discussions. This work was supported by PID2023-146424NB-I00 (Ministerio de Ciencia, Innovaci\'on y Universidades) and a Universitat de Girona--Banco Santander fellowship (IFUdG 2022--2024).

\section*{Supplementary verification material}
The finite algebraic verifications used in the proof are stated in Appendix~\ref{app:exact-verification} and expanded in Appendix~\ref{app:expanded-certificate-tables} into row-by-row Sturm, Rouch\'e, branch, no-return, and pruning certificates. Supplementary files accompanying the submission contain coefficient data for the H1 edge and Rouch\'e-margin polynomials, an index of the H2--H4 certificate objects, and reference material for the finite-capture renderings. The arXiv version is submitted in expanded form with these ancillary files, in accordance with the journal policy for manuscripts involving extensive finite computations.

\end{document}